\newcommand{\N}{{\mathbb{N}}}
\newcommand{\Z}{{\mathbb{Z}}}
\newcommand{\C}{{\mathbb{C}}}
\newcommand{\uloopr}[1]{\ar@'{@+{[0,0]+(-4,5)}@+{[0,0]+(0,10)}@+{[0,0] +(4,5)}}^{#1}}
\newcommand{\uloopd}[1]{\ar@'{@+{[0,0]+(5,4)}@+{[0,0]+(10,0)}@+{[0,0]+ (5,-4)}}^{#1}}
\newcommand{\dloopr}[1]{\ar@'{@+{[0,0]+(-4,-5)}@+{[0,0]+(0,-10)}@+{[0, 0]+(4,-5)}}_{#1}}
\newcommand{\dloopd}[1]{\ar@'{@+{[0,0]+(-5,4)}@+{[0,0]+(-10,0)}@+{[0,0 ]+(-5,-4)}}_{#1}}
\newcommand{\luloop}[1]{\ar@'{@+{[0,0]+(-8,2)}@+{[0,0]+(-10,10)}@+{[0, 0]+(2,2)}}^{#1}}
\newtheorem{lem}{Lemma}[section]
\newtheorem{corol}[lem]{Corollary}
\newtheorem{theor}[lem]{Theorem}
\newtheorem{prop}[lem]{Proposition}
\theoremstyle{definition}
\newtheorem{defi}[lem]{Definition}
\newtheorem{exem}[lem]{Example}
\newtheorem{rema}[lem]{Remark}
\begin{document}
\title[The structure of  crossed products by endomorphisms]{The structure of  crossed products by endomorphisms}%
\author{Eduard Ortega}
\address{Department of Mathematical Sciences\\
NTNU\\
NO-7491 Trondheim\\
Norway } \email{eduardor@math.ntnu.no}

\author{Enrique Pardo}
\address{Departamento de Matem\'aticas, Facultad de Ciencias\\ Universidad de C\'adiz, Campus de
Puerto Real\\ 11510 Puerto Real (C\'adiz)\\ Spain.}
\email{enrique.pardo@uca.es}\urladdr{https://sites.google.com/a/gm.uca.es/enrique-pardo-s-home-page/}

\thanks{This research was supported by the NordForsk Research Network
  ``Operator Algebras and Dynamics'' (grant \#11580). The first author was partly supported by  MEC-DGESIC (Spain) through Project MTM2008-06201-C02-01/MTM. The second author was partially supported by the DGI and European Regional Development Fund, jointly, through Project MTM2008-06201-C02-02 and by PAI III grants FQM-298 and P07-FQM-02467 of the Junta de Andaluc\'{\i}a. Both authors were partially supported by the DGI and European Regional Development Fund, jointly, through Project MTM2011-28992-C02-02, by the Consolider Ingenio
``Mathematica" project CSD2006-32 by the MEC and by 2009 SGR 1389 grant of the Comissionat per Universitats i Recerca de la Generalitat de
Catalunya.} \subjclass[2000]{Primary 16D70, 46L35; Secondary
06A12, 06F05, 46L80} \keywords{}
\date{\today}
\begin{abstract}
We describe simplicity of the Stacey crossed product $A\times_\beta \N$ in terms of conditions of the endomorphism $\beta$. 
Then, we use a characterization of the graph $C^*$-algebras $C^*(E)$ as the Stacey crossed product  $C^*(E)^\gamma \times_{\beta_E} \N$ to study its ideal properties, in terms of the (non-classical) $C^*$-dynamical system  $(C^*(E)^\gamma, \beta_E )$.
Finally, we give sufficient conditions for the Stacey crossed product $A\times_\beta \N$ being a purely infinite simple $C^*$-algebra.
\end{abstract}

\maketitle

In \cite{Cu}, Cuntz defined the fundamental Cuntz algebras $\mathcal{O}_n$. He also represented these algebras as crossed products of a UHF-algebra by an endomorphism, and he used this representation to prove the simplicity of his algebras.  In a subsequent paper \cite{Cu2} he saw this construction as a full corner of an ordinary crossed product. However Cuntz did not explain what kind of crossed product
by an endomorphism was. Later, Paschke \cite{Pa} gave an elegant generalization of Cuntz's result, and described the crossed product of a unital $C^*$-algebra by an endomorphism $\beta:A\rightarrow A$, written $A\times_\beta \N$, as the $C^*$-algebra generated by $A$ and an isometry $V$, such that $VaV^*=\beta(a)$. Endomorphisms of $C^*$-algebras appeared elsewhere
(cf. \cite{Arw}, \cite{Dop} and the references given there), and this led Stacey to give a modern description of their crossed products in terms of covariant representations and universal properties \cite{Stacey}. He also verified that the candidate proposed in \cite{Cu}
had the required property. See \cite{Ad} and \cite{Bo} for further study and generalization of the Stacey's crossed product.

Cuntz's representation of the $\mathcal{O}_n$ as crossed products by an endomorphism aimed to prove the simplicity of these $C^*$-algebras. Paschke gave conditions on the $C^*$-algebra $A$ and in the isometry to obtain a simple crossed product \cite[Proposition 2.1]{Pa}, later improved in \cite[Corollary 2.6]{Bo}. But it is in \cite[Theorem 4.1]{S} where the most powerful result about the simplicity of the Stacey crossed product is given. Namely, If $A$ is a unital $C^*$-algebra and $\beta$ is an injective $*$-endomorphism, then $A\times_\beta \N$ is simple and $\beta(1)$ is a full projection in $A$ if and only if $\beta^n$ is outer for every $n>0$  and there are no non-trivial ideals $I$ of $A$ with $\beta(I)\subseteq I$. Schweizer used the representation of the Stacey crossed product as Cuntz Pimsner algebra given by Muhly and Solel \cite{MS}. 

The theory of graph $C^*$-algebras $C^*(E)$ has been developed by a number of researchers (see \cite{Bates}, \cite{Bates2} and \cite{Rae}, among others) in an attempt to produce a far-reaching and yet accessible generalization of the Cuntz-Krieger algebras of finite matrices. Indeed, graph algebras do provide a large and interesting class of examples of $C^*$-algebras, both simple and non-simple ones. For example, Cuntz's algebras are $C^*$-algebras of a graph.

In \cite{AR} an Huef and Raeburn study the crossed products of an Exel system, and they prove that the relative Cuntz-Pimsner algebra of an Exel system is isomorphic to a Stacey crossed product of its core. This result leads them to a realization of the graph algebra $C^*(E)$ as a Stacey crossed product $C^*(E)^\gamma\times_{\beta_E}\N$ by an endomorphism of the core, extending the work of Kwa\'{s}niewski on finite graphs \cite{Kw}. 

In the case of Leavitt path algebras (see e.g. \cite{Ab}), this result appears in more simple form in \cite[Section 2]{Ara}, where the authors give a representation of Leavitt path algebras of a finite graph without sinks and sources as a fractional skew monoid rings (the algebraic analog of the crossed product by an endomorphism).    

The aim of this paper is to study the simplicity of the non-unital crossed product. Our fundamental technique is seeing the Stacey crossed product $A\times_\beta \N$ as a full corner of a crossed product by an automorphism $P( A_\infty\times_{\beta_\infty}\Z)P$  (see \cite{Cu2,Stacey}), where $P$ is a full projection of the multipliers that is invariant under the canonical gauge-action. Therefore, we can define the associated \emph{Connes' Spectrum} of the endomorphism in a similar way we do it for an automorphism (see \cite{O,OP1,JKO}) and construct a parallel Connes' spectrum theory for endomorphisms.  Hence,  following the results of Olesen and Pedersen \cite{OP1,OP3} we characterize simplicity for the Stacey crossed product $A\times_\beta \N$.

As an example, we use the characterization of graph $C^*$-algebras $C^*(E)$ as Stacey crossed product ${C^*(E)}^{\gamma}\times_{\beta_E}\N$ \cite{AR}, where in this case $C^*(E)^\gamma$ the core, that is a (non-unital) AF-algebra, and $\beta_E$ is a corner isomorphism. However, although the characterization of the simplicity of $C^*(E)$ is well understood in terms of properties of the graph \cite{Bates}, our intention is to describe this characterization in terms of the non-classical $C^*$-dynamical system $(C^*(E)^\gamma,\beta_E)$.  We also give conditions on the $C^*$-dynamical system to satisfy the Cuntz-Krieger uniqueness theorem: for any faithful covariant representation $(\pi,V)$ of $(C^*(E)^\gamma,\beta_E)$ we have $C^*(\pi,V)\cong C^*(E)$. Finally, by using ideas from \cite{Ror,JKO}, we give sufficient conditions on $A$ and the endomorphism $\beta$ in order to guarantee that $A\times_\beta \N$ is simple and purely infinite. The main difference between these previous results and ours is that we do not ask the $C^*$-algebra $A$ to be simple.

The contents of this paper can be summarized as follows: In Section $1$ we give the basic definitions of a Stacey crossed product. We use the characterization of the Stacey crossed product as a Cuntz Pimsner algebra \cite{MS} to describe the gauge invariant ideals, using a result from Katsura \cite{Ka2}. Then we define the  Connes' spectrum of an endomorphism \cite{O}, a technical device that, with the help of results from Olsen and Pedersen (\cite{OP1,OP3}), allows us to give necessary and sufficient conditions to state the simplicity of a Stacey crossed product. In Section $2$ we apply our results to graph $C^*$-algebras. We recall the definition of the graph endomorphism $\beta_E:C^*(E)^\gamma\rightarrow C^*(E)^\gamma$ of the core of the graph $C^*$-algebra \cite[Theorem  9.3]{AR}, used to prove that $C^*(E)\cong C^*(E)^\gamma\times_{\beta_E} \N$. 
Then, we characterize condition (L) of the graph $E$ in terms of the endomorphism $\beta_E$: every cycle has an entry. Condition (L) in $E$ implies that $C^*(E)$ satisfies the Cuntz-Krieger uniqueness theorem (see e.g. \cite[Section 2]{Rae}). Thus, we use our previous results to give the (well-known) necessary and sufficient condition of the graph $E$ for the graph $C^*$-algebra $C^*(E)$ being simple. Finally, in Section $3$, we give  sufficient conditions on the $C^*$-algebra $A$ and the endomorphism $\beta:A\longrightarrow A$ for $A\times_\beta \N$ being a unital simple and purely infinite $C^*$-algebra.

\section{Simple Stacey crossed product}
The pair $(A,\beta)$, where $A$ is a $C^*$-algebra and $\beta:A\rightarrow A$ an injective endomorphism, is called a \emph{$C^*$-dynamical system}. 

\begin{defi} We say that $(\pi,V)$ is a \emph{Stacey covariant representation} of $(A,\beta)$ if $\pi:A\rightarrow \mathcal{B}(\mathcal{H})$ is a non-degenerated  representation and $V$ is an isometry of $\mathcal{B}(\mathcal{H})$ such that $\pi(\beta(a))=V\pi(a)V^*$ for every $a\in A$.  We say that $(\pi,V)$ is \emph{faithful} if $\pi$ is faithful, and we denote by $C^*(\pi,V)$ the $C^*$-algebra generated by $\{\pi(A)V^n(V^m)^*\}_{n,m\geq 0}$.  
\end{defi}

Stacey showed in \cite{Stacey} that there exists a $C^*$-algebra that is generated by a universal Stacey covariant representation $(\iota_\infty, V_\infty)$. We call  $A\times_\beta  \N:=C^*(\iota_\infty, V_{\infty})$ the \emph{Stacey crossed product} of $A$ by the endomorphism $\beta$.

\begin{rema} Observe that, if $\beta$ is an automorphism, then $A\times_\beta  \N$ is the usual crossed product $A\times_\beta  \Z$.
\end{rema}

Given $z\in \mathbb{T}$, we define an automorphism in $A\times_\beta  \N$ by the rule $\gamma_z(a)=a$ and $\gamma_z(V_\infty) =zV_\infty$ for every $a\in A$. It defines the gauge action $\gamma:\mathbb{T}\rightarrow \text{Aut} (A\times_\beta  \N)$. An ideal $I$ of $A\times_\beta  \N$ is said to be \emph{gauge invariant} if $\gamma_z(I)=I$ for every $z\in \mathbb{T}$. We define a canonical faithful conditional expectation $E:A\times_\beta  \N\longrightarrow A$ as $E(x):=\int_\mathbb{T} \gamma_z(x) dz$ for every $x\in  A\times_\beta  \N$.

We say that the  endomorphism $\beta:A\longrightarrow A$ is \emph{extendible} if, given any strong convergent sequence  $\{x_n\}_{n\geq 0}\subset A$, then the sequence $\{\beta(x_n)\}_{n\geq 0}$ converges in the strong topology (i.e., $\beta$ extends to   $\widehat{\beta}:M(A)\longrightarrow M(A)$).  Observe that, if $\beta$ is injective, then $\widehat{\beta}(a)\in A$ implies that $a\in A$. Indeed, let $\{a_n\}$ be a sequence that converges in the strong topology and such that $\{\beta(a_n)\}$ converges in norm topology. Since $\beta$ is isometric ($\beta$ is injective) then $\{a_n\}$ converges in the norm topology too. 

We define the inductive system $\{A_i,\gamma_i\}_{i\geq 0}$ given by $A_i:=A$ and $\gamma_i=\beta$ for every $i\geq 0$. Let $A_\infty:=\varinjlim  \{A_i,\gamma_i\}$. For any $i\geq 0$, $\varphi_i:A_i\longrightarrow A_\infty$ denotes the (injective) canonical map.  The diagram 
\[
{
\def\labelstyle{\displaystyle}
 \xymatrix{ A \ar[d]^{\beta}\ar[r]^{\beta}  & A\ar[d]^{\beta}\ar[r]^{\beta} & A\ar[d]^{\beta}\ar[r]^\beta & \cdots  \\ A \ar[r]_{\beta}  & A\ar[r]_{\beta} & A\ar[r]_\beta & \cdots  }
}\]
gives rise to an automorphism $\beta_\infty:A_\infty\longrightarrow A_\infty$. 

Observe that, if $\beta$ is  an extendible endomorphism, then $\varphi_0$ extends to $\widehat{\varphi_0}:M(A)\longrightarrow M(A_\infty)$. 

\begin{prop}[{cf. \cite[Proposition 3.3]{S}}]\label{Morita_eq} If $A$ is a $C^*$-algebra and $\beta:A\longrightarrow A$ is an extendible and injective endomorphism, then  $A\times_\beta \N\cong P(A_\infty \times_{\beta_\infty} \Z)P$, where $P=\widehat{\varphi_0}(1_{M(A)})\in M(A_\infty \times_{\beta_\infty} \Z)$. Moreover, $P$ is a full projection, so that $A\times_\beta \N$ is strongly Morita equivalent to $A_\infty \times_{\beta_\infty} \Z$.
\end{prop}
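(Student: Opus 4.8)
The plan is to realize the proposition as the standard dilation of an endomorphism to an automorphism, and then to pin down the corner explicitly. Write $M:=A_\infty\times_{\beta_\infty}\Z$ and let $U\in M(M)$ be the unitary implementing $\beta_\infty$, so that $\beta_\infty(x)=UxU^*$ for $x\in A_\infty$. The inductive-limit maps satisfy $\varphi_i=\varphi_{i+1}\circ\beta$, and the induced automorphism is the one determined by $\beta_\infty\circ\varphi_0=\varphi_0\circ\beta$ and, more generally, $\beta_\infty\circ\varphi_{i}=\varphi_{i-1}$ for $i\ge1$; equivalently $\beta_\infty^{-n}\circ\varphi_0=\varphi_n$. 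Using extendibility I first extend $\varphi_0$ and $\beta_\infty$ to strictly continuous $*$-homomorphisms $\widehat{\varphi_0}\colon M(A)\to M(A_\infty)$ and $\widehat{\beta_\infty}\colon M(A_\infty)\to M(A_\infty)$, so that $P=\widehat{\varphi_0}(1_{M(A)})$ is a projection in $M(M)$ and $\beta_\infty(P)=\widehat{\varphi_0}(\widehat{\beta}(1))\le\widehat{\varphi_0}(1)=P$; iterating, $\beta_\infty^{k}(P)=U^kPU^{*k}\le P$ for all $k\ge0$. From this one gets the key operator identities $PU^kP=U^kP$ (for $k\ge0$) and hence, with $\ol V:=UP$, that $(\ol V)^{\,n}=U^nP$, $\ol V^{\,*}\ol V=P$ and $\ol V\ol V^{\,*}=\beta_\infty(P)\le P$. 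Thus $\ol V$ is an isometry in $M(PMP)$, and since $P\varphi_0(a)=\varphi_0(a)=\varphi_0(a)P$ one checks $\ol V\,\varphi_0(a)\,\ol V^{\,*}=U\varphi_0(a)U^*=\varphi_0(\beta(a))$. Finally $\varphi_0\colon A\to PMP$ is nondegenerate because $\varphi_0(e_\lambda)\to P$ strictly for an approximate unit $e_\lambda$ of $A$. Hence $(\varphi_0,\ol V)$ is a Stacey covariant representation of $(A,\beta)$ with values in the corner $PMP$.

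Next I would invoke the universal property of $A\times_\beta\N$ to obtain a $*$-homomorphism $\Phi\colon A\times_\beta\N\to PMP$ with $\Phi(\iota_\infty(a))=\varphi_0(a)$ and $\Phi(V_\infty)=\ol V$; since the generators $\iota_\infty(a)V_\infty^n(V_\infty^*)^m$ map to $\varphi_0(a)U^nPU^{-m}\in PMP$, the image indeed lies in the corner. For surjectivity I would use $\beta_\infty^{-k}\circ\varphi_0=\varphi_k$ to write $\varphi_k(a)=U^{*k}\varphi_0(a)U^k$, whence $P\varphi_k(a)P=(PU^{*k})\varphi_0(a)(U^kP)=\ol V^{\,*k}\varphi_0(a)\ol V^{\,k}$ lies in the image; letting $k$ and $a$ vary and using density of $\bigcup_k\varphi_k(A)$ in $A_\infty$ gives $PA_\infty P\subseteq\operatorname{im}\Phi$. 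Since $(1-P)U^jP=0$ for $j\ge0$ (again from $PU^jP=U^jP$), any $PxU^jP=(PxP)(U^jP)\in\operatorname{im}\Phi$ for $j\ge0$, and the case $j<0$ follows by taking adjoints after moving $U^{-j}$ past $x$. As such elements span a dense subset of $PMP$, $\Phi$ is onto.

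For injectivity I would argue by gauge equivariance. The dual $\Z$-action on $M$ fixes $A_\infty$, hence fixes $P$, so it restricts to the corner and averages to a faithful conditional expectation $E_P\colon PMP\to PA_\infty P$; on the other side we have the faithful expectation $E\colon A\times_\beta\N\to A$ of the excerpt. Because $\Phi(\ol V)=UP$ is scaled by $z$ under the dual action exactly as $V_\infty$ is under $\gamma_z$, the map $\Phi$ is equivariant and therefore intertwines the expectations, $\Phi\circ E=E_P\circ\Phi$. Now if $\Phi(x)=0$ then $\Phi(E(x^*x))=E_P(\Phi(x^*x))=0$; but $E(x^*x)\in A$ and $\Phi|_A=\varphi_0$ is injective (the connecting maps are injective since $\beta$ is), so $E(x^*x)=0$, and faithfulness of $E$ forces $x=0$. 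Hence $\Phi$ is an isomorphism $A\times_\beta\N\cong PMP$.

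It remains to show $P$ is full. The ideal of $M$ generated by $P$ contains each $U^{*n}PU^n=\beta_\infty^{-n}(P)=\widehat{\varphi_n}(1_{M(A)})$; these projections are the units of the increasing subalgebras $\varphi_n(A)$ and therefore form an approximate unit of $A_\infty=\ol{\bigcup_n\varphi_n(A)}$, so the ideal contains all of $A_\infty$. Since the coefficient algebra $A_\infty$ is full in the crossed product $A_\infty\times_{\beta_\infty}\Z$, the ideal is all of $M$, i.e. $P$ is full; the strong Morita equivalence with $A_\infty\times_{\beta_\infty}\Z$ then follows from the standard full-corner theory. The main obstacle I anticipate is the bookkeeping at the level of multiplier algebras: one must check carefully that extendibility really yields strictly continuous extensions of $\varphi_0$ and $\beta_\infty$ for which $P$ is a projection and $\ol V=UP$ is an honest isometry in $M(PMP)$, and that $\varphi_0$ is nondegenerate into the corner—everything downstream (the identities $PU^kP=U^kP$, the covariance relation, and the surjectivity computation) rests on these multiplier-level facts.
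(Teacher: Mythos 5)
Your route is the classical Cuntz--Stacey corner/dilation argument. Note first that the paper contains no proof of Proposition \ref{Morita_eq} at all: it is quoted from \cite[Proposition 3.3]{S} (cf.\ also \cite{Cu2,Stacey}), so the comparison is with that standard argument rather than with anything internal to the paper. Most of what you write is correct: the identities $\beta_\infty^k(P)\le P$ and $PU^kP=U^kP$, the covariance of the pair $(\varphi_0,UP)$, the surjectivity computation via $P\varphi_k(a)P=(U^kP)^*\varphi_0(a)(U^kP)$, and the fullness of $P$ are all sound. (In the fullness step one small repair: $U^{*n}PU^n$ is a multiplier, so it is never an element of the ideal $I=\overline{MPM}$; the correct statement is that closed ideals of $M$ absorb multipliers, hence $U^{*n}IU^n=\overline{M(U^{*n}PU^n)M}\subseteq I$, and then $x\widehat{\varphi_n}(1)y\to xy$ strictly gives $A_\infty\subseteq I$, whence $I=M$.)

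The genuine gap is in your injectivity step, at the claim ``$E(x^*x)\in A$''. This invokes the paper's assertion that the gauge expectation maps $A\times_\beta\N$ onto $\iota_\infty(A)$, i.e.\ that the gauge fixed-point algebra equals $\iota_\infty(A)$; in the generality of this proposition ($\beta$ extendible and injective, with \emph{no} hereditariness hypothesis on $\beta(A)$) that assertion is false, and it is exactly the point at which faithfulness could fail. Indeed, the fixed-point algebra contains every element $V_\infty^{*n}\iota_\infty(a)V_\infty^n$ (these lie in the crossed product, e.g.\ $(\iota_\infty(b)V_\infty^n)^*(\iota_\infty(c)V_\infty^n)$ with $a=b^*c$), and your $\Phi$ sends such an element to $P\varphi_n(a)P=\varphi_n\bigl(\widehat{\beta^n}(1)a\widehat{\beta^n}(1)\bigr)$, which lies in $\iota_\infty(A)=\varphi_n(\beta^n(A))$ only if $\widehat{\beta^n}(1)a\widehat{\beta^n}(1)\in\beta^n(A)$ --- precisely the hereditariness condition of Remark \ref{corner}. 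Concretely, take $A=C_0(\No)$ and $\beta(f)=f\circ h$ with $h(n)=\max\{n-1,0\}$: then $\widehat{\beta}(1)=1$, so $V_\infty$ is a unitary, $V_\infty^{*}\iota_\infty(f)V_\infty$ is gauge-fixed, and it belongs to $\iota_\infty(A)$ only when $f\in\beta(A)$ (conjugate back by $V_\infty$ and use injectivity of $\iota_\infty$); so the range of $E$ is strictly larger than $\iota_\infty(A)$ and your deduction ``$\Phi(E(x^*x))=0$, $\Phi|_A$ injective, hence $E(x^*x)=0$'' does not apply. The gap is reparable but requires real work: either show that the fixed-point algebra is the closure of the increasing union $\bigcup_n V_\infty^{*n}\iota_\infty\bigl(\widehat{\beta^n}(1)A\widehat{\beta^n}(1)\bigr)V_\infty^n$ (by reducing degree-zero words with $V_\infty\iota_\infty(a)=\iota_\infty(\beta(a))V_\infty$, $\iota_\infty(a)V_\infty^*=V_\infty^*\iota_\infty(\beta(a))$ and $V_\infty V_\infty^*\iota_\infty(a)=\iota_\infty(\widehat{\beta}(1)a)$), note that $\Phi$ restricts on each term to the injective homomorphism induced by $\varphi_n$, hence is isometric on the closure, and then run your expectation argument with this algebra in place of $A$; or avoid expectations altogether, as the cited sources do, by proving that every covariant representation of $(A,\beta)$ dilates to one of $(A_\infty,\beta_\infty)$ whose compression by $P$ recovers it, so that the corner itself enjoys Stacey's universal property. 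Under the hereditariness hypothesis in force in the rest of the paper your argument does close up; as a proof of the proposition as stated, it does not.
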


Therefore, there there exist a bijection between the ideals of $A_\infty \times_{\beta_\infty} \Z$ and $A\times_\beta \N$ given by $$I\longmapsto PIP\qquad\text{and}\qquad J\longmapsto \overline{(A_\infty \times_{\beta_\infty} \Z )J (A_\infty \times_{\beta_\infty} \Z) }\,.$$ Moreover, if $U_\infty $ is the unitary that implements the automorphism $\beta_\infty$, then $V_\infty = PU_\infty P$ is the isometry implementing $\beta$. Since $\gamma_z(P)=P$ for every $z\in \mathbb{T}$, the canonical gauge action $\gamma:\mathbb{T}\longrightarrow \text{ Aut }(A_\infty \times_{\beta_\infty} \Z)$ restricts to the gauge action of $A\times_\beta \N$. 

\begin{lem} If $A$ is a $C^*$-algebra and $\beta:A\longrightarrow A$ is an extendible and injective endomorphism, then there exists an order preserving  bijection between gauge invariant ideals of $A\times_\beta \N$ and $A_\infty \times_{\beta_\infty} \Z$.
\end{lem}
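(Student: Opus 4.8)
The plan is to show that the ideal bijection displayed just before the statement --- which comes from Proposition \ref{Morita_eq} together with the Rieffel correspondence for the full corner --- restricts to a bijection between the respective lattices of gauge-invariant ideals. Write $B:=A_\infty\times_{\beta_\infty}\Z$, so that $A\times_\beta\N\cong PBP$ with $P=\widehat{\varphi_0}(1_{M(A)})$ a full, gauge-invariant projection.

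First I would recall that fullness of $P$ makes $I\mapsto PIP$ and $J\mapsto\overline{BJB}$ mutually inverse, order-preserving bijections between the ideals of $B$ and those of $PBP$; this is the standard Rieffel correspondence for the strong Morita equivalence implemented by a full corner, and it is precisely the bijection already recorded in the excerpt. Thus it only remains to prove that each of these two maps sends gauge-invariant ideals to gauge-invariant ideals. Since the two maps are mutually inverse, once this is established their restrictions are automatically mutually inverse order-preserving bijections between the sublattices of gauge-invariant ideals, which is the assertion of the lemma.

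For the forward direction, let $I$ be a gauge-invariant ideal of $B$ and fix $z\in\mathbb{T}$. Using that $\gamma_z$ is a $*$-automorphism of $B$ with $\gamma_z(P)=P$, one computes $\gamma_z(PIP)=P\gamma_z(I)P=PIP$, where the first equality is multiplicativity of $\gamma_z$ combined with $\gamma_z(P)=P$, and the second uses $\gamma_z(I)=I$. Since $\gamma$ restricts to the gauge action on $PBP$ (as noted after Proposition \ref{Morita_eq}), this shows $PIP$ is gauge-invariant. For the reverse direction, let $J$ be a gauge-invariant ideal of $PBP$. As $\gamma_z$ is a continuous $*$-automorphism of $B$ fixing $B$ setwise, we get $\gamma_z(\overline{BJB})=\overline{\gamma_z(B)\gamma_z(J)\gamma_z(B)}=\overline{BJB}$, using $\gamma_z(J)=J$; hence $\overline{BJB}$ is gauge-invariant.

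The only point requiring care --- and the crux of the argument --- is the interchange of $\gamma_z$ with the two corner operations, which is exactly where the gauge-invariance $\gamma_z(P)=P$ of $P$ is used; everything else is the formal behaviour of the Rieffel correspondence. I do not anticipate a genuine obstacle here, only the bookkeeping of verifying both directions and confirming that order-preservation descends to the restricted maps.
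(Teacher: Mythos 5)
Your proposal is correct and follows exactly the route the paper intends: the paper states this lemma without a separate proof, treating it as immediate from the displayed Rieffel correspondence $I\mapsto PIP$, $J\mapsto \overline{(A_\infty\times_{\beta_\infty}\Z)J(A_\infty\times_{\beta_\infty}\Z)}$ together with the observation that $\gamma_z(P)=P$ makes the gauge action of $A_\infty\times_{\beta_\infty}\Z$ restrict to that of $A\times_\beta\N$. Your write-up simply makes explicit the two invariance checks the paper leaves to the reader.
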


Now, we will describe the gauge invariant ideals in terms of the $C^*$-dynamical system $(A,\beta)$.
Given an endomorphism $\beta:A\longrightarrow A$, it is easy to check that $\beta(A)$ is a hereditary sub-$C^*$-algebra of $A$ if and only if $\overline{\beta(A)A\beta(A)}=\beta(A)$.

\begin{defi}
Let $A$ be a $C^*$-algebra and let  $\beta:A\rightarrow A$ an endomorphism such that $\beta(A)$ is a hereditary sub-$C^*$-algebra of $A$. We say that an ideal $I$ of $A$ is \emph{$\beta$-invariant} if  $\overline{\beta(A)I\beta(A)}=\beta(I)$. We say that $A$ is \emph{$\beta$-simple} if there are no non-trivial $\beta$-invariant ideals.
\end{defi}

\begin{lem}\label{invariant} 
Let $A$ be a $C^*$-algebra, and let  $\beta:A\rightarrow A$ an endomorphism such that $\beta(A)$ is a hereditary sub-$C^*$-algebra of $A$. If $I$ is a $\beta$-invariant ideal of $A$, then it is also $\beta^n$-invariant for every $n>0$. 
\end{lem}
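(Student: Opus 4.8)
The plan is to argue by induction on $n$, the base case $n=1$ being exactly the hypothesis that $I$ is $\beta$-invariant. Before starting, I would record two preliminary observations. First, for the statement ``$I$ is $\beta^n$-invariant'' to even make sense one needs $\beta^n(A)$ to be a hereditary subalgebra of $A$; this follows by a separate induction, since applying the (continuous, range-closed) homomorphism $\beta$ to the identity $\overline{\beta(A)A\beta(A)}=\beta(A)$ yields $\overline{\beta^{k+1}(A)\beta^k(A)\beta^{k+1}(A)}=\beta^{k+1}(A)$, i.e. $\beta^{k+1}(A)$ is hereditary in $\beta^k(A)$, and heredity is transitive along the chain $\beta^n(A)\subseteq\cdots\subseteq\beta(A)\subseteq A$. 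Second, since $I$ is an ideal, $\beta(A)I\beta(A)\subseteq AIA\subseteq I$, so the $\beta$-invariance relation gives $\beta(I)=\overline{\beta(A)I\beta(A)}\subseteq I$; I will use $\beta(I)\subseteq I$ below.

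For the inductive step I assume $\overline{\beta^n(A)I\beta^n(A)}=\beta^n(I)$ and prove the same with $n$ replaced by $n+1$. Applying $\beta$ to the inductive hypothesis (again using that $\beta$ sends the $C^*$-algebra $\beta^n(I)$ onto the closed $C^*$-algebra $\beta^{n+1}(I)$) gives the auxiliary identity $\overline{\beta^{n+1}(A)\beta(I)\beta^{n+1}(A)}=\beta^{n+1}(I)$. Combining this with $\beta(I)\subseteq I$ immediately yields the inclusion $\beta^{n+1}(I)\subseteq\overline{\beta^{n+1}(A)I\beta^{n+1}(A)}$, which is the easy half.

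The substance is the reverse inclusion $\overline{\beta^{n+1}(A)I\beta^{n+1}(A)}\subseteq\beta^{n+1}(I)$, and this is where I expect the main difficulty. It suffices to treat a generator $\beta^{n+1}(a)\,x\,\beta^{n+1}(b)$ with $a,b\in A$ and $x\in I$. I rewrite it as $\beta^n(u)\,x\,\beta^n(v)$ with $u=\beta(a),\,v=\beta(b)\in\beta(A)$, the crucial point being that the outer factors carry an extra copy of $\beta$. Choosing an approximate unit $\{e_\lambda\}$ of $A$, I approximate $\beta^n(u)x\beta^n(v)=\lim_\lambda \beta^n(u)\,\beta^n(e_\lambda)\,x\,\beta^n(e_\lambda)\,\beta^n(v)$. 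By the inductive hypothesis $\beta^n(e_\lambda)x\beta^n(e_\lambda)\in\overline{\beta^n(A)I\beta^n(A)}=\beta^n(I)$, say equal to $\beta^n(z_\lambda)$ with $z_\lambda\in I$; hence the $\lambda$-term equals $\beta^n(u z_\lambda v)$. Now $u,v\in\beta(A)$ and $z_\lambda\in I$ force $u z_\lambda v\in\beta(A)I\beta(A)\subseteq\overline{\beta(A)I\beta(A)}=\beta(I)$, so $\beta^n(u z_\lambda v)\in\beta^{n+1}(I)$. Passing to the limit (using that $\beta^{n+1}(I)$ is closed) gives $\beta^n(u)x\beta^n(v)\in\beta^{n+1}(I)$, completing the inclusion and the induction.

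I expect the approximate-unit sandwiching to be the delicate point: it is what lets me replace the ``free'' middle factor $x\in I$ by an element genuinely of the form $\beta^n(z_\lambda)$ so that the inductive hypothesis applies, and it is essential that the two outer factors were peeled off as $\beta^n$ of elements of $\beta(A)$, so that after collapsing via the homomorphism property the base-case $\beta$-invariance relation can be invoked to land in $\beta^{n+1}(I)$ rather than merely in $\beta^n(I)$ or $\beta(I)$.
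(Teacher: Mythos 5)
Your proof is correct, and it shares the paper's overall scheme --- induction on $n$, where the substance is the inclusion $\overline{\beta^{n+1}(A)I\beta^{n+1}(A)}\subseteq\beta^{n+1}(I)$, obtained by sandwiching $I$ between factors one power below the target, applying the inductive hypothesis to that sandwich, collapsing via the homomorphism property, and invoking the base identity $\overline{\beta(A)I\beta(A)}=\beta(I)$ --- but the device that produces the sandwich is genuinely different. The paper uses heredity directly: writing $\beta^{n}(A)=\beta^{n-1}(\overline{\beta(A)A\beta(A)})$, each outer factor already carries an inner $\beta^{n-1}(A)$-term; after absorbing the innermost factors into $I$ by the ideal property, the inductive hypothesis applies to $\overline{\beta^{n-1}(A)I\beta^{n-1}(A)}$, and the whole computation stays at the level of closed spans. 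You instead peel the outer factors as $\beta^{n}(\beta(a))$, insert $\beta^{n}(e_\lambda)$ from an approximate unit of $A$ next to the middle element of $I$, and argue elementwise with a limit. Your route buys two small things: first, your inductive step never invokes heredity of $\beta(A)$ at all --- you need it only in the preliminaries, to see that $\beta^{n}(A)$ is hereditary so that ``$\beta^{n}$-invariant'' is meaningful, a point the paper leaves implicit and which is in fact the case $I=A$ of the lemma itself (this is exactly how the paper later uses the lemma in its Remark on corners); second, you prove the reverse inclusion $\beta^{n+1}(I)\subseteq\overline{\beta^{n+1}(A)I\beta^{n+1}(A)}$ explicitly, via the auxiliary identity and $\beta(I)\subseteq I$, whereas the paper's displayed chain establishes only the forward inclusion and then asserts equality. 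The price is length: the paper's span-level manipulation is more compact and avoids the approximate-unit limit altogether.
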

\begin{proof} Let $I$ be an ideal such that $\overline{\beta(A)I\beta(A)}=\beta(I)$. We will prove the result by induction on $n$. The case $n=1$ being clear, suppose that $\overline{\beta^{n-1}(A)I\beta^{n-1}(A)}=\beta^{n-1}(I)$.  Observe that, since $\beta(A)$ is a hereditary sub-$C^*$-algebra of $A$, we have that $\beta(A)=\overline{\beta(A)A\beta(A)}$. Thus,
\begin{align*} \overline{\beta^n(A)I\beta^n(A)} & = \overline{\beta^{n-1}(\beta(A)A\beta(A))I\beta^{n-1}(\beta(A)A\beta(A))} \\
 & \subseteq \overline{\beta^{n}(A)\beta^{n-1}(A) I \beta^{n-1}(A) \beta^n(A)} \\
 & = \overline{\beta^n(A)\beta^{n-1}(I)\beta^n(A)}=\beta^{n-1}(\overline{\beta(A)I\beta(A)})=\beta^n(I)\,.
\end{align*}
Therefore, $\overline{\beta^n(A)I\beta^n(A)}=\beta^n(I)$ as desired.
\end{proof}

\begin{rema}\label{rema_inv} 
Notice that the converse of the above Lemma is not true in general. Let $A= C_0(\Z)$ and let ${\beta}:C_0(\Z)\rightarrow C_0(\Z)$ be the automorphism  that sends $\chi_{\{i\}}$ (the characteristic function at $i$) to $\chi_{\{i+1\}}$ for every $i\in\Z$.  It is clear that $C_0(\Z)$ is $\beta$-simple, but $I=C_0(2\cdot \Z)$ is a $\beta^2$-invariant ideal.
\end{rema}

Observe also that, if $I$ is a $\beta$-invariant ideal, then $\beta(I)$ is a hereditary sub-$C^*$-algebra of $A$, but the above example also shows that the converse it is not true.

\begin{rema}\label{corner} Let $\beta$ be  an injective and extendible endomorphism such that $\beta(A)$ is hereditary. 
If we set the projection $P=\widehat{\varphi_0}(1_{M(A)})=(1, P_1, P_2,\ldots)\in M(A_\infty)$, where $P_n=\widehat{\beta}^n(1_{M(A)})$, then we have that $A\cong \varphi_0(A)=PA_\infty P$. Hence, we can see $A$ as a hereditary sub-$C^*$-algebra of $A_\infty$ such that ${\beta_\infty}_{|A}=\beta$. Indeed, it is enough to check that, given any $n\in \N$ and $a\in A$, then $P\varphi_n(a)P=\widehat{\varphi_n}(P_naP_n)\in \varphi_0(A)$. But since $P_naP_n\in \overline{\beta^n(A)A\beta^n(A)}=\beta^n(A)$ (by Lemma \ref{invariant}), we have that $P\varphi_n(a)P\in \varphi_n(\beta^n(A))=\varphi_0(A)$.
\end{rema}

In \cite{Pi} Pimsner introduced a  class of $C^*$-algebras (later improved by Katsura \cite{Ka1}) generated by $C^*$-correspondences ($X$, $\varphi_X$) over $A$, called Cuntz-Pimsner algebras and denoted by $\mathcal{O}_X$. In particular this class includes crossed products and graph $C^*$-algebras. Katsura \cite{Ka2} studies gauge-invariant ideals of Cuntz-Pimsner algebras; in particular, when $X$ is a Hilbert $A$-bimodule (see e.g. \cite{AEE}), he obtain a bijection between gauge invariant ideals of the Cuntz-Pimsner algebra $\mathcal{O}_X$ and invariant ideals $I$ of $A$ with respect to the correspondence $X$ (i.e., $\varphi_X(I)X=XI$) \cite[Theorem 10.6]{Ka2}. 

Let $\beta:A\longrightarrow A$ is an injective endomorphism such that $\beta(A)$ is a hereditary sub-$C^*$-algebra. If we set $X:={}_{\beta}A=\beta(A)A$ with left-action $\varphi_X$ given by the  endomorphism $\beta$, and right inner product given by $<x,y>_A=x^*y$ for every $x,y\in A$, then we have a $C^*$-correspondence. We have that $\varphi_X(A)\subseteq \mathcal{K}(X)$ (the compact operators of $X$), and since $\beta(A)$ is a hereditary sub-$C^*$-algebra, it follows that  $\overline{\beta(A)A\beta(A)}=\beta(A)$, whence $\varphi_X(A)= \mathcal{K}(X)$. Therefore, since $\beta$ is injective and  $\varphi_X(A)=\mathcal{K}(X)$, we can define a left inner product as ${}_A<x,y>:=\varphi_{X}^{-1}(\theta_{x,y})$ for every $x,y\in A$. Hence, $X$ has a natural structure of Hilbert $A$-bimodule. 

\begin{lem}[{cf. \cite{MS}}]\label{otroketal}  
If $A$ is a $C^*$-algebra, $\beta:A\longrightarrow A$ is an injective endomorphism such that $\beta(A)$ is a hereditary sub-$C^*$-algebra of $A$ and $X={}_\beta A$ is the Hilbert $A$-bimodule defined above, then $\mathcal{O}_X\cong A\times_{\beta}\N$. 
\end{lem}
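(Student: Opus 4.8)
The plan is to match the universal properties of the two algebras by setting up a bijection between Stacey covariant representations of $(A,\beta)$ and (Cuntz--Pimsner) covariant representations of the Hilbert bimodule $X$. Recall first that, since $X$ is a Hilbert $A$-bimodule with $\varphi_X$ injective and $\varphi_X(A)=\mathcal{K}(X)$, Katsura's ideal is $J_X=A$; hence a Toeplitz representation $(\pi,t)$ of $X$ is covariant precisely when $\psi_t(\varphi_X(a))=\pi(a)$ for \emph{all} $a\in A$, where $\psi_t(\theta_{x,y})=t(x)t(y)^*$. Given a Stacey covariant representation $(\pi,V)$ of $(A,\beta)$ on $\mathcal{H}$, I would define $t\colon X\to \mathcal{B}(\mathcal{H})$ by $t(x):=V^*\pi(x)$. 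A direct computation, using $V^*V=1$ and that $VV^*\pi(x)=\pi(x)$ for $x\in\beta(A)A$ (because $\pi(\beta(a))=V\pi(a)V^*$), shows that $t(x)^*t(y)=\pi(x^*y)=\pi(\langle x,y\rangle_A)$ and $t(\varphi_X(a)x)=\pi(a)t(x)$, so $(\pi,t)$ is a Toeplitz representation. For covariance one checks $\psi_t(\theta_{x,y})=V^*\pi(xy^*)V=\pi(\beta^{-1}(xy^*))=\pi({}_A\langle x,y\rangle)$; since the $\theta_{x,y}$ span a dense subset of $\mathcal{K}(X)=\varphi_X(A)$ and $\varphi_X$ is injective, extending by linearity and continuity gives $\psi_t(\varphi_X(a))=\pi(a)$ for every $a$, as required.

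Conversely, from a covariant representation $(\pi,t)$ of $X$ on $\mathcal{H}$ (with $\pi$ non-degenerate) I would reconstruct the isometry. The key point is that covariance forces $\overline{t(X)\mathcal{H}}=\mathcal{H}$: indeed $\pi(a)=\psi_t(\varphi_X(a))$ lies in the closed span of the operators $t(x)t(y)^*$, so $\overline{\pi(A)\mathcal{H}}\subseteq\overline{t(X)\mathcal{H}}$, and the left side is $\mathcal{H}$ by non-degeneracy. One then defines $V$ on the dense subspace $t(X)\mathcal{H}$ by $V\,t(x)h:=\pi(x)h$; this is well defined and isometric because $\|t(x)h\|^2=\langle\pi(x^*x)h,h\rangle=\|\pi(x)h\|^2$, and hence extends to an isometry $V$ on $\mathcal{H}$ with $V^*V=1$. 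From $Vt(x)=\pi(x)$ and $V^*V=1$ one gets $t(x)=V^*\pi(x)$, and then
\[
V\pi(a)V^*\pi(x)=V\pi(a)t(x)=V\,t(\beta(a)x)=\pi(\beta(a)x)=\pi(\beta(a))\pi(x),
\]
so that, after checking that both sides have range in $V\mathcal{H}=\overline{\pi(X)\mathcal{H}}$, we conclude $V\pi(a)V^*=\pi(\beta(a))$; thus $(\pi,V)$ is a Stacey covariant representation. It is then routine that the two assignments $(\pi,V)\mapsto(\pi,t)$ and $(\pi,t)\mapsto(\pi,V)$ are mutually inverse.

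Finally I would identify the generated $C^*$-algebras. The inclusion $C^*(\pi,t)\subseteq C^*(\pi,V)$ is immediate from $t(x)=V^*\pi(x)$. For the reverse inclusion one must produce the monomials $\pi(a)V^n(V^*)^m$ inside $C^*(\pi,t)$; writing $VV^*=\text{s-}\lim_\lambda\pi(\beta(u_\lambda))$ for an approximate unit $(u_\lambda)$ of $A$ and using $\pi(z)V=t(z^*)^*$ for $z\in A\beta(A)=X^*$, one obtains $\pi(a)V=\pi(a\widehat{\beta}(1))V=\lim_\lambda\pi(a\beta(u_\lambda))V\in C^*(\pi,t)$ in norm, and then all monomials by taking products and adjoints. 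With $C^*(\pi,V)=C^*(\pi,t)$ for every matching pair, applying this to the two universal representations and invoking the universal properties of $A\times_\beta\N$ and $\mathcal{O}_X$ yields mutually inverse $*$-homomorphisms and hence $\mathcal{O}_X\cong A\times_\beta\N$. I expect the main obstacle to be precisely this last identification of generators in the \emph{non-unital} setting: since $A$ need not be unital and $\beta(A)$ need not be full, one cannot simply manipulate $V$ and must control everything through approximate units and the extension $\widehat{\beta}$ to multipliers, which is where extendibility of $\beta$ (and the description of $VV^*$ as a strong limit) does the real work.
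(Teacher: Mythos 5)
Your overall strategy is the right one, and it is in fact the standard argument behind the result the paper simply quotes from Muhly--Solel \cite{MS} without proof: set up a bijection between Stacey covariant representations of $(A,\beta)$ and Katsura-covariant representations of $X$, then play the two universal properties against each other. The representation-theoretic half of your proof is essentially correct under the stated hypotheses: $J_X=A$ because $\varphi_X$ is injective with $\varphi_X(A)=\mathcal{K}(X)$; $t(x):=V^*\pi(x)$ is Toeplitz and covariant (your computation works because $xy^*\in\beta(A)$ by heredity, so $\theta_{x,y}=\varphi_X(\beta^{-1}(xy^*))$); and conversely $Vt(x)h:=\pi(x)h$ is a well-defined isometry with $V\pi(a)V^*=\pi(\beta(a))$, although your parenthetical ``both sides have range in $V\mathcal{H}$'' should be completed to: both sides vanish on $(V\mathcal{H})^{\perp}$, which holds because $\pi(\beta(a))$ \emph{and its adjoint} map into $\overline{\pi(\beta(A))\mathcal{H}}=V\mathcal{H}$.

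The genuine gap is exactly where you located the ``real work'': the inclusion $C^*(\pi,V)\subseteq C^*(\pi,t)$. Your argument needs $\pi(a\beta(u_\lambda))V\to\pi(a)V$ in norm, i.e.\ norm convergence of $a\beta(u_\lambda)$, i.e.\ strict convergence of $\beta(u_\lambda)$ in $M(A)$ --- and that is precisely \emph{extendibility} of $\beta$, which is \emph{not} among the hypotheses of the lemma (only injectivity and heredity of $\beta(A)$ are assumed). Extendibility does not follow from these hypotheses: take $A=C_0((0,1])$ and $\beta(f)(t)=f(2t-1)$ for $t\in(1/2,1]$ and $\beta(f)(t)=0$ otherwise; then $\beta$ is injective and $\beta(A)=C_0((1/2,1])$ is an ideal, hence hereditary, yet $a\beta(u_\lambda)$ is not norm-Cauchy whenever $a(1/2)\neq 0$. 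Worse, for this $\beta$ the equality you claim ``for every matching pair'' is actually false: representing $A$ by multiplication on $L^2((0,1])$ with $V$ the natural weighted composition isometry, every $*$-monomial in $\pi(A)\cup t(X)$ of gauge degree one reduces to $\pi(h)V$ with $h\in\overline{A\beta(A)}=C_0((1/2,1])$, and a gauge-averaging argument then shows $\pi(a)V\notin C^*(\pi,t)$ when $a(1/2)\neq 0$, so $C^*(\pi,t)\subsetneq C^*(\pi,V)$. Thus your last step is not merely unproved but fails in the stated generality, and your argument proves the lemma only under the additional hypothesis that $\beta$ is extendible. That hypothesis is satisfied in every application the paper makes of the lemma (Proposition \ref{Morita_eq} and Theorem \ref{theor_simple} assume extendibility, and the graph endomorphism $\beta_E$ is extendible), and is presumably tacit in the statement; but as a proof of the lemma exactly as written, the final identification of generators is a genuine gap that cannot be closed without strengthening the hypotheses.
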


Thus, we can apply Katsura's description of the gauge invariant ideals, and we see that an ideal $I$ of $A$ is invariant with respect to the correspondence $X$ if and only if $\beta(A)I=\beta(I)A$. 

\begin{lem}\label{inv_ideal} 
If $A$ is a $C^*$-algebra and $\beta:A\longrightarrow A$ is an injective endomorphism such that $\beta(A)$ is a hereditary sub-$C^*$-algebra of $A$, then $I$ is a $\beta$-invariant ideal of $A$ if and only if $\beta(A)I=\beta(I)A$.
\end{lem}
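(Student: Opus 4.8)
The plan is to reduce both conditions to a single equality of hereditary sub-$C^*$-algebras, exploiting the standard bijection between closed right ideals of $A$ and hereditary sub-$C^*$-algebras (given by $R\mapsto R\cap R^*=\overline{RR^*}$, with inverse $B\mapsto\overline{BA}$). Throughout, a product of subsets denotes its closed linear span, and I would freely use that $\beta$ is isometric (so $\beta(I)$ is closed), that $\overline{II}=I$ and $\overline{AA}=A$, and that hereditariness of $\beta(A)$ means $\beta(A)A\beta(A)\subseteq\beta(A)$ as sets. Recall that, as observed just before the statement, the condition ``$\beta(A)I=\beta(I)A$'' is shorthand for Katsura's invariance $\overline{\beta(A)I}=\overline{\beta(I)A}$.

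First I would establish an auxiliary fact with \emph{no} invariance hypothesis: $\beta(I)$ is always a hereditary sub-$C^*$-algebra of $A$, i.e. $\overline{\beta(I)A\beta(I)}=\beta(I)$. The inclusion $\supseteq$ is immediate from $\beta(I)=\overline{\beta(I)\beta(I)}$. For $\subseteq$, given $\beta(i)a\beta(i')$ I would write $\beta(i)=\lim_\lambda\beta(i)\beta(e_\lambda)$ and $\beta(i')=\lim_\mu\beta(e_\mu)\beta(i')$ with $(e_\lambda)$ an approximate unit of $I$; then $\beta(e_\lambda)a\beta(e_\mu)\in\beta(A)A\beta(A)\subseteq\beta(A)$ equals some $\beta(d)$, so $\beta(i)\beta(d)\beta(i')=\beta(i\,d\,i')\in\beta(I)$ since $i\,d\,i'\in IAI\subseteq I$, and passing to the limit yields the claim. (One could equally factor via Cohen--Hewitt.) This uses only that $\beta(A)$ is hereditary and $I$ is an ideal.

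The core of the argument is then purely formal. Set $L:=\overline{\beta(A)I}$ and $R:=\overline{\beta(I)A}$; both are closed right ideals of $A$, because $\overline{IA}=I$ and $\overline{\beta(I)A\,A}=\overline{\beta(I)A}$. Taking adjoints gives $L^*=\overline{I\beta(A)}$ and $R^*=\overline{A\beta(I)}$, and a direct computation using $\overline{II}=I$, $\overline{AA}=A$ produces
\[
\overline{LL^*}=\overline{\beta(A)I\beta(A)},\qquad \overline{RR^*}=\overline{\beta(I)A\beta(I)}=\beta(I),
\]
the last equality by the auxiliary fact. Since $R\cap R^*=\overline{RR^*}$ for any closed right ideal and the map $R\mapsto R\cap R^*$ is injective, I conclude $L=R$ if and only if $\overline{LL^*}=\overline{RR^*}$, that is,
\[
\overline{\beta(A)I}=\overline{\beta(I)A}\quad\Longleftrightarrow\quad \overline{\beta(A)I\beta(A)}=\beta(I),
\]
which is exactly the asserted equivalence between Katsura invariance and $\beta$-invariance, both directions at once.

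The step I expect to be delicate is the implication $\beta$-invariant $\Rightarrow$ Katsura, if attacked head-on: there is no obvious elementary way to insert the extra factor $\beta$ turning $\overline{\beta(A)I}$ into $\overline{\beta(I)A}$, since $\beta(A)A\not\subseteq\beta(A)$ in general. Routing everything through the right-ideal/hereditary-subalgebra correspondence bypasses this, converting the inclusion problem into the clean identity $\overline{RR^*}=\beta(I)$. Accordingly, the two points to check with care are the auxiliary fact (where hereditariness of $\beta(A)$ is essential) and the collapse of the two products to $\overline{\beta(A)I\beta(A)}$ and $\beta(I)$; the invocation of the ideal correspondence is then standard.
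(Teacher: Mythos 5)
Your proof is correct, and it takes a genuinely different route from the one in the paper. The paper argues the two implications separately and by hand: for ``$\beta(A)I=\beta(I)A\Rightarrow\beta$-invariant'' it combines the hypothesis with its adjoint to identify $\overline{\beta(I)A\beta(I)}$ with $\overline{\beta(A)I\beta(A)}\subseteq\beta(A)$, and then squeezes with an approximate unit of $I$ pushed through $\beta$ to land inside $\beta(I)$; for the converse, the delicate inclusion $\beta(A)I\subseteq\overline{\beta(I)A}$ --- exactly the step you predicted would be hard head-on --- is handled by showing $\beta(e_na)y\to\beta(a)y$ via the estimate $\|\beta(e_na)y-\beta(a)y\|^2\leq\|e_nze_n-ze_n\|+\|e_nz-z\|$, where the crucial input is that invariance places $\beta(a)yy^*\beta(a^*)=\beta(z)$ for some $z\in I$, so the estimate can be pulled back through the isometric $\beta$. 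Your argument instead settles both directions at once through the standard bijection $R\mapsto R\cap R^*=\overline{RR^*}$ between closed right ideals and hereditary sub-$C^*$-algebras: this is sound, since $L=\overline{\beta(A)I}$ and $R=\overline{\beta(I)A}$ are indeed closed right ideals, $\overline{LL^*}=\overline{\beta(A)I\beta(A)}$, your auxiliary fact gives $\overline{RR^*}=\overline{\beta(I)A\beta(I)}=\beta(I)$, and injectivity of the correspondence (i.e.\ $R=\overline{(R\cap R^*)A}$) converts the equality of hereditary subalgebras back into $L=R$. The trade-off: your proof is shorter and explains structurally \emph{why} the two conditions agree (they are the right-ideal and hereditary-subalgebra shadows of one and the same object), but it leans on the ideal/hereditary correspondence, whereas the paper's proof is self-contained, using nothing beyond approximate units. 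Note also that your auxiliary fact --- $\beta(I)$ is hereditary for \emph{every} ideal $I$, with no invariance hypothesis --- slightly strengthens the unnumbered observation following Remark~\ref{rema_inv}, where the paper asserts hereditariness of $\beta(I)$ only for $\beta$-invariant $I$; your factorization proof of it (using only $\beta(A)A\beta(A)\subseteq\beta(A)$, injectivity of $\beta$ to write $\beta(e_\lambda)a\beta(e_\mu)=\beta(d)$, and $IAI\subseteq I$) is correct.
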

\begin{proof} First, suppose that $\beta(A)I=\beta(I)A$, and observe that $\beta(I)\subseteq I$. Thus we have that 
$$\beta(I)A\beta(I)=\beta(A)I\beta(A)\subseteq \overline{\beta(A)A\beta(A)}=\beta(A)\,.$$
Then multiplying at both sides by $\beta(I)$ we have $\beta(A)I\beta(A)=\beta(I)A\beta(I)\subseteq\beta(I)$, and therefore $\overline{\beta(A)I\beta(A)}=\beta(I)$.

In the other side, suppose that $\overline{\beta(A)I\beta(A)}=\beta(I)$. From $\beta(I)\subseteq I$ it follows $\beta(I)A\subseteq\beta(A)I$.  Now, let $\{e_n\}\subset I_+$ be an approximate unit of $I$, and let $a\in A$ and $y\in I$. We claim that $\beta(e_n)\beta(a)y=\beta(e_na)y$ converges to $\beta(a)y$, whence $\beta(a)y\in \beta(I)A$. Indeed, let $z\in I$ such that $\beta(a)yy^*\beta(a^*)=\beta(z)$. Given $\varepsilon>0$ there exists $n\in \N$ such that $\|e_nz-z\|<\varepsilon/2$. Then we have
\begin{align*}  \|\beta(e_na)y-\beta(a)y\|^2 & = \|(\beta(e_na)y-\beta(a)y)(\beta(e_na)y-\beta(a)y)^*\| \\
& \leq \|  \beta(e_na)yy^*\beta(a^*e_n)-\beta(a)yy^*\beta(a^*e_n)\| + \| \beta(e_na)yy^*\beta(a^*)-\beta(a)yy^*\beta(a^*)\| \\
& \leq \|  \beta(e_nze_n)-\beta(ze_n)\| + \| \beta(e_nz)-\beta(z)\| \\
& =\|  e_nze_n-ze_n\| + \| e_nz-z\| < \varepsilon/2+\varepsilon/2=\varepsilon \,.
\end{align*}

Thus $\beta(e_na)y$ converges to $\beta(a)y$, as desired.
\end{proof}

\begin{prop}\label{gauge_inv} 
If $A$ is a $C^*$-algebra and $\beta:A\longrightarrow A$ is an injective endomorphism such that $\beta(A)$ is a hereditary sub-$C^*$-algebra of $A$, then there is a bijection between gauge invariant ideals of $A\times_\beta \N$ and $\beta$-invariant ideals of $A$. Thus, $A$ is $\beta$-simple if and only if $A_\infty$  is $\beta_\infty$-simple.
\end{prop}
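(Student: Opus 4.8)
The plan is to read off the first assertion directly from the correspondence machinery assembled above, and then to obtain the simplicity statement by applying that same bijection twice: once to the given system $(A,\beta)$ and once to the auxiliary automorphic system $(A_\infty,\beta_\infty)$, gluing the two through the Morita bijection of Proposition \ref{Morita_eq}.

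For the bijection itself, I would begin by identifying, via Lemma \ref{otroketal}, the crossed product $A\times_\beta\N$ with the Cuntz--Pimsner algebra $\mathcal{O}_X$ of the Hilbert $A$-bimodule $X={}_\beta A$, noting that this isomorphism intertwines the two gauge actions so that gauge invariant ideals correspond. I would then invoke Katsura's \cite[Theorem 10.6]{Ka2}, which gives an order preserving bijection between the gauge invariant ideals of $\mathcal{O}_X$ and the ideals $I$ of $A$ satisfying $\varphi_X(I)X=XI$. As recorded in the discussion preceding Lemma \ref{inv_ideal}, this correspondence condition unwinds to $\beta(A)I=\beta(I)A$, which by Lemma \ref{inv_ideal} is precisely $\beta$-invariance of $I$. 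Composing these identifications yields the asserted bijection between gauge invariant ideals of $A\times_\beta\N$ and $\beta$-invariant ideals of $A$.

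For the \emph{Thus}, I would apply the bijection just established also to $(A_\infty,\beta_\infty)$. First I would check that this system meets the standing hypotheses: $\beta_\infty$ is an automorphism, hence injective, and $\beta_\infty(A_\infty)=A_\infty$ is trivially a hereditary sub-$C^*$-algebra; moreover, since $\beta_\infty$ is an automorphism, $A_\infty\times_{\beta_\infty}\N=A_\infty\times_{\beta_\infty}\Z$ (by the Remark following the definition of the Stacey crossed product), and a $\beta_\infty$-invariant ideal is just one with $\beta_\infty(I)=I$. Then I would chain three order preserving bijections: $\beta$-invariant ideals of $A$ correspond to gauge invariant ideals of $A\times_\beta\N$ (the first part), these correspond to gauge invariant ideals of $A_\infty\times_{\beta_\infty}\Z$ (the order preserving Morita bijection coming from Proposition \ref{Morita_eq}), and these in turn correspond to $\beta_\infty$-invariant ideals of $A_\infty$ (the first part applied to $(A_\infty,\beta_\infty)$). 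The composite is an order preserving bijection sending $\{0\}\mapsto\{0\}$ and $A\mapsto A_\infty$, so $A$ carries a non-trivial $\beta$-invariant ideal exactly when $A_\infty$ carries a non-trivial $\beta_\infty$-invariant ideal; that is, $A$ is $\beta$-simple if and only if $A_\infty$ is $\beta_\infty$-simple.

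Essentially all of the analytic content lives in the lemmas already proved, so this argument is a matter of assembly. The two points that genuinely need care are verifying that $(A_\infty,\beta_\infty)$ satisfies the hypotheses so that the first part legitimately applies to it, and confirming that each bijection in the chain preserves order and carries the extreme ideals to the extreme ideals, so that triviality is transported by the composite. I expect the latter to be the only mild subtlety, and it is immediate from Katsura's correspondence, under which the zero ideal maps to the zero ideal and the improper ideal $A$ maps to the whole algebra $\mathcal{O}_X$.
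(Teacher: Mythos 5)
Your treatment of the first statement is exactly the paper's argument: Lemma \ref{otroketal} identifies $A\times_\beta\N$ with $\mathcal{O}_X$ for $X={}_\beta A$, Katsura's \cite[Theorem 10.6]{Ka2} gives the bijection with the ideals satisfying $\beta(A)I=\beta(I)A$, and Lemma \ref{inv_ideal} translates that condition into $\beta$-invariance. That half is correct.

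The gap is in your proof of the ``Thus''. Proposition \ref{gauge_inv} assumes only that $\beta$ is injective with $\beta(A)$ hereditary; it does \emph{not} assume $\beta$ extendible. But the middle link of your chain --- the order preserving bijection between gauge invariant ideals of $A\times_\beta\N$ and of $A_\infty\times_{\beta_\infty}\Z$ --- comes from Proposition \ref{Morita_eq} and the unlabeled lemma following it, and both of those results require extendibility: the corner projection $P=\widehat{\varphi_0}(1_{M(A)})$ only exists once $\varphi_0$ extends to multiplier algebras. Extendibility is not a consequence of your standing hypotheses. For instance, take $A=C_0(\mathbb{R})$, let $h:(0,\infty)\to\mathbb{R}$ be a homeomorphism (say $h(t)=\log t$), and define $\beta(f)$ to be $f\circ h$ on $(0,\infty)$ and $0$ on $(-\infty,0]$; then $\beta$ is injective and $\beta(A)=C_0((0,\infty))$ is an ideal, hence hereditary, but for any approximate unit $(e_n)$ the sequence $\beta(e_n)$ converges pointwise to the discontinuous function $\chi_{(0,\infty)}$, so $\beta(e_n)g$ cannot converge in norm when $g(0)\neq 0$, and $\beta$ is not extendible. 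So, as written, your chain proves the statement only under an additional hypothesis. The paper avoids this by proving the second statement directly at the level of the inductive limit, citing \cite[Lemma 6.1]{OP1}: a non-trivial $\beta$-invariant ideal $I$ (note $\beta(I)\subseteq I$) gives the non-trivial $\beta_\infty$-invariant ideal $J:=\overline{\bigcup_n\varphi_n(I)}$ of $A_\infty$, and conversely a $\beta_\infty$-invariant ideal $J$ gives the $\beta$-invariant ideal $\varphi_0^{-1}(J)$; the key check $\varphi_0^{-1}\bigl(\overline{\bigcup_n\varphi_n(I)}\bigr)=I$ uses only that $\beta(a)\in I$ forces $\beta(a)\in\overline{\beta(A)I\beta(A)}=\beta(I)$, hence $a\in I$ by injectivity. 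If you add extendibility to the hypotheses, your three-step chain is a valid (and genuinely different) argument; in the stated generality it does not go through.
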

\begin{proof}
First statement holds from  \cite[Theorem 10.6]{Ka2} and Lemmas  \ref{otroketal} \& \ref{inv_ideal}. Last statement follows from \cite[Lemma 6.1]{OP1}.
\end{proof}

\begin{rema}\label{desprescaldra}
The bijection stated in Proposition \ref{gauge_inv}  sends $I\mapsto \overline{(A\times_\beta \N)\cdot I \cdot(A\times_\beta \N)}$ and $K\mapsto K\cap A.$
\end{rema}

Finally we give necessary and sufficient conditions for the simplicity of a Stacey crossed product. The main technical device we use is the Connes' spectrum of an endomorphism. This is just a reformulation of the Connes' spectrum for automorphisms (see \cite{O,JKO}). We will see that for nice endomorphisms (extendible and hereditary image) the  Connes' spectrum of $\beta$ and that of the associated automorphism $\beta_\infty$ coincide.  Therefore, we will be able to use results by Olesen and Pedersen  to determine the conditions for the simplicity of the Stacey crossed products.
 
\begin{defi} 
Let $A$ be a $C^*$-algebra and let $\beta:A\rightarrow A$ be an endomorphism. Then we say that:
\begin{enumerate}
\item $\beta$ is \emph{inner} if there exists an isometry $W\in M(A)$ such that $\beta=\text{Ad }W$.
\item $\beta$ is \emph{outer} if it is not inner.
\item $\beta$ is \emph{weakly properly outer} if for every $\beta$-invariant ideal $I$ and every $n>0$ the restriction endomorphism $\beta^n_{|I}$ is outer.
\end{enumerate}
\end{defi}

Recall \cite[Definition 2.1]{El} that an automorphism $\alpha$ of a $C^*$-algebra $I$ is said to be properly outer if for every nonzero $\alpha$-invariant two-sided ideal $I$ of $A$ and for every unitary multiplier $u$ of $I$, $\Vert \alpha \vert I-\mbox{Ad}_u\vert I\Vert$. When $\beta$ is an automorphism, the notion of weakly properly outerness is weaker than the properly outer notion by Elliott \cite{El}, later studied by Olesen and Pedersen \cite[Theorem 10.4]{OP3}. 

\begin{defi}  
Let $A$ be a $C^*$-algebra, let $\beta:A\rightarrow A$ be an extendible injective endomorphism and let $\gamma:\mathbb{T}\longrightarrow \text{Aut }(A\times_\beta \N)$ be the gauge action. We define the \emph{Connes' spectrum} of $\beta$ as 
$$\mathbb{T}(\beta):=\{t\in\mathbb{T}: \gamma_t(I)\cap I\neq 0 \text{ for every }0\neq I\lhd A\times_\beta \N\}\,.$$
\end{defi}

\begin{rema}
Observe that $\mathbb{T}(\beta)$ is a closed subgroup of $\mathbb{T}$. Hence can only be $\{ 1\}$, $\mathbb{T}$ or a finite subgroup.
\end{rema}

This definition of the Connes' spectrum coincide with the one given by Olesen and Olesen \& Pedersen \cite{O,OP1} when $\beta$ is an automorphism.  Moreover, using that the bijection between ideals of $A\times_\beta \N$ and these of $A_\infty \times_{\beta_\infty}\Z$ given by
$$I\longmapsto PIP\qquad\text{and}\qquad J\longmapsto \overline{(A_\infty \times_{\beta_\infty} \Z )J (A_\infty \times_{\beta_\infty} \Z) }\,,$$ 
and the fact that the canonical gauge action $\gamma:\mathbb{T}\longrightarrow \text{ Aut }(A_\infty \times_{\beta_\infty} \Z)$ restricts to the gauge action of $A\times_\beta \N$ (since $\gamma_z(P)=P$ for every $z\in \mathbb{T}$), the following lemma easily follows.

\begin{lem}
If $A$ is a $C^*$-algebra and $\beta:A\rightarrow A$ is an extendible injective endomorphism with $\beta(A)$ being a hereditary sub-$C^*$-algebra of $A$, then $\mathbb{T}(\beta)=\mathbb{T}(\beta_\infty)$. 
\end{lem}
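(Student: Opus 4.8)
The plan is to push everything through the full-corner picture $A\times_\beta\N\cong P(A_\infty\times_{\beta_\infty}\Z)P$ and to use the two facts recalled immediately above the statement: the map $J\mapsto PJP$ is an order-preserving bijection from the ideals of $B:=A_\infty\times_{\beta_\infty}\Z$ onto the ideals of $A\times_\beta\N$, and the gauge action $\gamma$ on $B$ restricts to the gauge action on $A\times_\beta\N$ because $\gamma_t(P)=P$. Fixing $t\in\mathbb{T}$, I would reduce the claim to the equivalence
$$\gamma_t(J)\cap J\neq 0\ \text{ for all }0\neq J\lhd B\quad\Longleftrightarrow\quad \gamma_t(I)\cap I\neq 0\ \text{ for all }0\neq I\lhd A\times_\beta\N,$$
since this says exactly $t\in\mathbb{T}(\beta_\infty)\iff t\in\mathbb{T}(\beta)$. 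Here I use that, $\beta_\infty$ being an automorphism, $B=A_\infty\times_{\beta_\infty}\N$ and its Connes' spectrum is computed with this same gauge action.

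First I would record two elementary properties of the corner map. Because $J\mapsto PJP$ is an order-preserving bijection, it carries the zero ideal to the zero ideal and is injective, so $PJP=0$ if and only if $J=0$. Second, for any ideals $J_1,J_2\lhd B$ I claim $P(J_1\cap J_2)P=(PJ_1P)\cap(PJ_2P)$: the inclusion $\subseteq$ is clear, and for $\supseteq$ any $x$ in the right-hand side satisfies $x=PxP$ and lies in $J_1\cap J_2$, since $P\in M(B)$ and each $J_i$ is invariant under multipliers, whence $PJ_iP\subseteq J_i$; thus $x=PxP\in P(J_1\cap J_2)P$.

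Next I would set $I=PJP$ for $0\neq J\lhd B$, noting that as $J$ ranges over the nonzero ideals of $B$ the corner $I$ ranges over the nonzero ideals of $A\times_\beta\N$. Using $\gamma_t(P)=P$ and that $\gamma_t$ extends to $M(B)$ as an automorphism, I get $\gamma_t(I)=\gamma_t(PJP)=P\gamma_t(J)P$, where on the corner $\gamma_t$ is the restricted gauge action. Applying the intersection identity to $J_1=\gamma_t(J)$ and $J_2=J$ then yields
$$\gamma_t(I)\cap I=\bigl(P\gamma_t(J)P\bigr)\cap\bigl(PJP\bigr)=P\bigl(\gamma_t(J)\cap J\bigr)P,$$
so by the first property the left-hand side is nonzero exactly when $\gamma_t(J)\cap J\neq 0$. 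Running over all nonzero $J$ (equivalently all nonzero $I$) gives the displayed equivalence, hence $\mathbb{T}(\beta)=\mathbb{T}(\beta_\infty)$.

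I expect no serious obstacle here, in line with the remark that the lemma ``easily follows''. The only step needing a genuine (if short) argument is the intersection identity, whose mild point is that ideals of $B$ are invariant under the multiplier $P$, so that the full-corner bijection is a lattice isomorphism that, thanks to $\gamma_t(P)=P$, is compatible with the gauge action; everything else is bookkeeping with the corner dictionary.
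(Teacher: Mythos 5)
Your proposal is correct and follows essentially the same route as the paper: the paper derives the lemma precisely from the full-corner ideal bijection $J\mapsto PJP$ and the fact that $\gamma_z(P)=P$ makes the gauge action on $A_\infty\times_{\beta_\infty}\Z$ restrict to that of $A\times_\beta\N$, which is exactly your argument with the bookkeeping (injectivity of the corner map and the intersection identity) written out. No gaps; the details you supply, including $PJP\subseteq J$ via multiplier-invariance of ideals, are the ones the paper leaves implicit when it says the lemma ``easily follows.''
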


Let $\beta$ be an extendible endomorphism such that $\beta=\text{Ad } V$, where $V$ is an isometry of $M(A)$. Then we can construct a unitary of $M(A_\infty)$ $U:=\sum_{i\geq 0}\widehat{\varphi_i}(V)$ such that $\beta_\infty=\text{Ad }U$. Now, let us see a result following from \cite{O}.

\begin{theor}\label{theor_simple} 
Let $A$ be a $C^*$-algebra and let $\beta:A\rightarrow A$ be an extendible injective endomorphism with $\beta(A)$ being a hereditary sub-$C^*$-algebra of $A$. Let us consider the following statements:
\begin{enumerate}
\item $\mathbb{T}(\beta^n)=\mathbb{T}$ for every $n>0$.
\item Given $a\in A\mbox{ }^{\widetilde{ }}$ (the unitization of $A$) and any $B$ hereditary sub-$C^*$-algebra of $A$, for every $n>0$ we have that
$$\text{inf }\{\|xa\beta^n(x)\|: 0\leq x\in B \text{ with }\|x\|=1\}=0\,.$$
\item $\beta^n$ is outer for every $n>0$.
\end{enumerate}
Then, $(1)\Rightarrow (2) \Rightarrow (3)$. Moreover, if $A$ is $\beta$-simple, then $(3)\Rightarrow (1)$ (and thus all they are equivalent).
\end{theor}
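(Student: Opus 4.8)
The strategy is to pull the whole statement back to the associated \emph{automorphism} $\beta_\infty$ of $A_\infty$, where the classical Connes' spectrum theory of Olesen and Olesen--Pedersen applies, and then to descend again to the corner $A=PA_\infty P$. The three internal facts I shall lean on are: the identification $A=\varphi_0(A)=PA_\infty P$ as a hereditary sub-$C^*$-algebra of $A_\infty$ with $\beta_{\infty}\!\mid_{A}=\beta$ (Remark \ref{corner}); the equality $\mathbb{T}(\beta^n)=\mathbb{T}(\beta_\infty^n)$ established above, applied to the power $\beta^n$ (which is again extendible, injective and has hereditary image by Lemma \ref{invariant}, and whose inductive-limit automorphism is $\beta_\infty^{n}$); and the equivalence ``$A$ is $\beta$-simple $\Leftrightarrow$ $A_\infty$ is $\beta_\infty$-simple'' of Proposition \ref{gauge_inv}. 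The single external input I invoke is the Olesen--Pedersen theorem \cite[Theorem 10.4]{OP3} (see also \cite{OP1,O}): under the standing hypotheses of \cite{OP3}, an automorphism $\gamma$ of a $C^*$-algebra $D$ satisfies $\mathbb{T}(\gamma)=\Gamma(\gamma)=\mathbb{T}$ if and only if $\gamma^k$ is properly outer for every $k\neq0$, and a properly outer $\gamma$ enjoys the averaging property $\inf\{\|y\,d\,\gamma(y)\|:0\le y\in B,\ \|y\|=1\}=0$ for every $d\in\widetilde{D}$ and every hereditary sub-$C^*$-algebra $B$ of $D$.

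I first dispose of $(2)\Rightarrow(3)$ by contraposition, working directly in $A$. Assume $\beta^n=\mathrm{Ad}\,W$ for an isometry $W\in M(A)$, so $\beta^n(x)=WxW^*$. For any $c\in A_+$ put $a:=cW^*\in A\subseteq\widetilde A$; then
$$x\,a\,\beta^n(x)=x\,cW^*\,WxW^*=xcx\,W^*,$$
and since $W^*W=1$ right multiplication by $W^*$ is isometric, so $\|x\,a\,\beta^n(x)\|=\|xcx\|$ for every $x$. Now fix a nonzero $c\in A_+$, choose by functional calculus $e=f(c)$ with $0\le f\le1$ supported in $\{t\ge\delta\}$ for some $\delta>0$, and set $B:=\overline{eAe}$. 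If $p$ denotes the support projection of $e$, then $px=x=xp$ for $x\in B$ and $pcp\ge\delta p$, whence $xcx\ge\delta x^2$ and $\|x\,a\,\beta^n(x)\|=\|xcx\|\ge\delta$ for all norm-one $0\le x\in B$. Thus the infimum in $(2)$ is at least $\delta>0$, contradicting $(2)$; hence $(2)$ forces every $\beta^n$ to be outer.

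For $(1)\Rightarrow(2)$ fix $n$ and set $\alpha:=\beta_\infty^n$. By $(1)$ and the identification above, $\mathbb{T}(\alpha)=\mathbb{T}(\beta^n)=\mathbb{T}$, so the quoted theorem makes $\alpha$ properly outer and hence grants it the averaging property on $A_\infty$. To obtain $(2)$ I restrict to the corner: a hereditary sub-$C^*$-algebra $B$ of $A$ is also hereditary in $A_\infty$ (since $A$ is); every $a\in\widetilde A$ gives an element of $\widetilde{A_\infty}$; and $\beta^n(x)=\alpha(x)$ for $x\in B\subseteq A$. Hence the infimum in $(2)$ equals the averaging infimum for $\alpha$ over $B$, which is $0$. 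For the last implication, assume moreover that $A$ is $\beta$-simple and that $(3)$ holds; then $A_\infty$ is $\beta_\infty$-simple by Proposition \ref{gauge_inv}, and I claim every $\beta_\infty^m$ ($m\neq0$) is properly outer. First it is outer: if $\beta_\infty^m=\mathrm{Ad}\,U$ with $U\in M(A_\infty)$ unitary, then, exactly as $V_\infty=PU_\infty P$ implements $\beta$, the element $W:=PUP\in M(A)$ would be an isometry with $WxW^*=\beta^m(x)$ for $x\in A=PA_\infty P$, making $\beta^m$ inner and contradicting $(3)$. To upgrade to proper outerness, let $J\neq0$ be a $\beta_\infty^m$-invariant ideal; then $\sum_{i\in\Z}\beta_\infty^{i}(J)=\sum_{i=0}^{m-1}\beta_\infty^{i}(J)$ is a nonzero $\beta_\infty$-invariant ideal, hence all of $A_\infty$, so innerness of $\beta_\infty^m$ on $J$ would spread along the translates $\beta_\infty^{i}(J)$ to innerness on $A_\infty$, contradicting the previous line. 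Thus every power of $\beta_\infty$ is properly outer, so the quoted theorem gives $\mathbb{T}(\beta_\infty^n)=\mathbb{T}$ for every $n>0$; by the identification, $\mathbb{T}(\beta^n)=\mathbb{T}$, which is $(1)$.

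The analytic heart — and the main obstacle — is the passage from a full Connes' spectrum to the averaging property used in $(1)\Rightarrow(2)$: this is precisely where the Olesen--Pedersen spectral-subspace machinery is indispensable, and one must confirm that their standing hypotheses (separability, or the relevant $\sigma$-unitality/primeness) are met by $A_\infty$ — or else replace the black box by a self-contained spectral-subspace argument. The other delicate point is the outer-to-properly-outer upgrade in the last implication: the ``spreading'' argument must be made precise, checking that the local implementing unitaries on the translates $\beta_\infty^{i}(J)$ can be matched on overlaps so as to glue into a single unitary multiplier of $A_\infty=\sum_i\beta_\infty^i(J)$; this is the step that genuinely consumes the $\beta_\infty$-simplicity hypothesis.
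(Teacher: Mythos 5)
Your arguments for $(1)\Rightarrow(2)$ and $(2)\Rightarrow(3)$ are correct and essentially the paper's own: the first passes to the properly outer automorphism $\beta_\infty^n$ via \cite[Theorem 10.4 and Lemma 7.1]{OP3} and restricts the averaging property to the corner $A=PA_\infty P$; the second is the same contradiction built from $a=cW^*$ (the paper arranges for $c$ to act as a unit on the hereditary subalgebra $\overline{(b-\varepsilon)_+A(b-\varepsilon)_+}$ by functional calculus instead of your support-projection estimate, but both computations are valid).

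The genuine gap is in $(3)\Rightarrow(1)$, at the ``outer-to-properly-outer upgrade''. Proper outerness in the sense of Elliott \cite{El} and Olesen--Pedersen is a \emph{quantitative} condition: for every nonzero $\beta_\infty^m$-invariant ideal $J$ and every unitary $u\in M(J)$ one must have $\|\beta_\infty^m|_J-\mathrm{Ad}\,u|_J\|=2$. Your spreading/gluing argument, even if carried out in complete detail, would only show that $\beta_\infty^m$ is not inner on any invariant ideal --- which is precisely the paper's notion of \emph{weakly properly outer}, and the paper states explicitly that this is strictly weaker than proper outerness (an outer automorphism can lie at distance less than $2$ from the inner ones; this cannot be ruled out here since $A_\infty$ is non-unital and only $\beta_\infty$-simple, and the $\beta_\infty^m$-invariant ideals $J$ can be proper, cf.\ Remark \ref{rema_inv}). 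So the assertion ``every power of $\beta_\infty$ is properly outer'' does not follow from what you prove, and the subsequent appeal to \cite[Theorem 10.4]{OP3} is unsupported; moreover your paraphrase of that theorem as an unconditional equivalence between full Connes spectrum and proper outerness of all powers overstates it. The paper's proof never needs proper outerness in this direction: assuming $\mathbb{T}(\beta_\infty)\neq\mathbb{T}$, this proper closed subgroup is finite, hence has nontrivial annihilator, and \cite[Theorem 4.5]{OP3} --- which applies exactly because $A_\infty$ is $\beta_\infty$-simple, by Proposition \ref{gauge_inv} --- yields $k\neq 0$ with $\beta_\infty^k=\mathrm{Ad}\,U$ for a unitary $U\in M(A_\infty)$; compressing, $V=PUP\in M(A)$ is an isometry with $\beta^k=\mathrm{Ad}\,V$, contradicting plain outerness in $(3)$. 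Thus only non-innerness is ever consumed, and the distance-$2$ issue never arises. To repair your argument you must either genuinely establish proper outerness of the powers $\beta_\infty^m$ (not merely outerness on invariant ideals), or replace this step by the paper's use of \cite[Theorem 4.5]{OP3}.
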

\begin{proof} 

$(1)\Rightarrow (2)$ This is \cite[Theorem 10.4 and Lemma 7.1]{OP3}. If $\mathbb{T}(\beta^n)=\mathbb{T}$ then $\mathbb{T}(\beta^n_\infty)=\mathbb{T}$ for every $n>0$, so $\beta^n_\infty$ is properly outer for every $n>0$. Since any  hereditary sub-$C^*$-algebra $B$ of $A$ is also a hereditary sub-$C^*$-algebra of $A_\infty$, (see Remark \ref{corner}), we can apply \cite[Proof of Lemma 7.1]{OP3} to $B$. Thus, since ${\beta^n_\infty}_{|A}=\beta^n$, we have the result.

$(2)\Rightarrow (3)$ Suppose that $\beta^n=\text{Ad }W$ for an isometry $W\in M(A)$. Fix $\varepsilon>0$, and take $b\in A_+$ with $\|b\|=1$. Set $c:=f_\varepsilon(b)$, where $f_\varepsilon(t):[0,1]\longrightarrow \mathbb{R}_+$ is the continuous function that is $f_\varepsilon(0)=0$, constant $1$ for $t\geq \varepsilon$ and linear otherwise. Then, we have that $xc=cx=x$ for every $x\in \overline{(b-\varepsilon)_+A(b-\varepsilon)_+}$. Hence, given any $0\leq x\in \overline{(b-\varepsilon)_+A(b-\varepsilon)_+}$ with $\|x\|=1$, we have that
\begin{align*} \|x(cW^*)\beta^n(x)\|^2 & = \| x(cW^*)WxW^*\|^2 =\| xcxW^*\|^2\\
 & =\|x^2W^*\|^2= \|x^2W^*Wx^2\|=\|x^4\|=\|x\|^4=1\,,
\end{align*} 
which contradicts the hypothesis, since $cW^*\in A$.

Now, suppose that $A$ is $\beta$-simple. We are going to prove 
that $(3)\Rightarrow (1)$. By \cite[Theorem 10.4]{OP3} we have that $\mathbb{T}(\beta_\infty)=\mathbb{T}$ if and only if $\mathbb{T}(\beta^n_\infty)=\mathbb{T}$ for every $n\in\N$. Let us suppose that $\mathbb{T}(\beta)=\mathbb{T}(\beta_\infty)\neq\mathbb{T}$. Hence, $\mathbb{T}(\beta_\infty)$ is a finite subgroup, and thus the complement $\mathbb{T}(\beta_\infty)^{\bot}\neq \{0\}$.  Therefore, by \cite[Theorem 4.5]{OP3}, for every $k\in \mathbb{T}(\beta_\infty)^{\bot}$ we have that $\beta_\infty^k=\text{Ad }U$, where $U\in M(A_\infty)$. But then $V=PUP\in M(A)$ is an isometry such that $\beta^k=\text{Ad }V$, a contradiction.
\end{proof}

Then it follows the characterization of simplicity.

\begin{corol}\label{theor_simple_corol} 
Let $A$ be a $C^*$-algebra and let $\beta:A\rightarrow A$ be an extendible injective endomorphism with $\beta(A)$ being a hereditary sub-$C^*$-algebra of $A$. Then $A\times_\beta \N$ is simple if and only if $A$ is $\beta$-simple and $\beta^n$ is outer for every $n>0$.
\end{corol}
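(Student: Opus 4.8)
The plan is to transport the problem through the Morita equivalence of Proposition~\ref{Morita_eq} and then apply the classical Connes'-spectrum characterization of simplicity for crossed products by $\Z$. The first step I would take is to observe that, since $P=\widehat{\varphi_0}(1_{M(A)})$ is a \emph{full} projection, the corner isomorphism $A\times_\beta \N\cong P(A_\infty\times_{\beta_\infty}\Z)P$ together with the order-preserving ideal correspondence $I\mapsto PIP$ displayed after Proposition~\ref{Morita_eq} gives a lattice isomorphism between the two-sided ideals of the two algebras. In particular, $A\times_\beta \N$ is simple if and only if $A_\infty\times_{\beta_\infty}\Z$ is simple. This reduction is what makes the argument possible: it replaces a crossed product by an endomorphism, for which no direct simplicity criterion is at hand, by an honest crossed product by the automorphism $\beta_\infty$, where the full machinery of Olesen and Pedersen applies.

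The second step is to invoke the Olesen--Pedersen simplicity theorem \cite{OP1}: for the automorphism $\beta_\infty$ of $A_\infty$, the crossed product $A_\infty\times_{\beta_\infty}\Z$ is simple if and only if $A_\infty$ is $\beta_\infty$-simple and $\mathbb{T}(\beta_\infty)=\mathbb{T}$. Here I would remark that, because $\beta_\infty$ is an automorphism, a $\beta_\infty$-invariant ideal $I$ satisfies $\overline{\beta_\infty(A_\infty)I\beta_\infty(A_\infty)}=\overline{A_\infty I A_\infty}=I=\beta_\infty(I)$, so that $\beta_\infty$-invariance reduces to the classical condition $\beta_\infty(I)=I$; thus ``$\beta_\infty$-simple'' is exactly $\Z$-simplicity and the cited theorem applies verbatim, with $\mathbb{T}(\beta_\infty)$ being the Connes' spectrum in the classical sense.

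It then remains to re-express both conditions on the right in terms of $(A,\beta)$. The $\beta_\infty$-simplicity of $A_\infty$ is equivalent to the $\beta$-simplicity of $A$ by the final assertion of Proposition~\ref{gauge_inv}. For the spectral condition, the (unlabelled) lemma immediately preceding Theorem~\ref{theor_simple} gives $\mathbb{T}(\beta)=\mathbb{T}(\beta_\infty)$. Applying that same lemma to the endomorphism $\beta^n$, which is again extendible, injective and has hereditary image $\beta^n(A)$ (the latter by Lemma~\ref{invariant} applied to the $\beta$-invariant ideal $I=A$), and using the routine identification $(\beta^n)_\infty=(\beta_\infty)^n$ of the inductive limit over the cofinal subsystem with connecting maps $\beta^n$, I obtain $\mathbb{T}(\beta^n)=\mathbb{T}((\beta_\infty)^n)$ for every $n>0$. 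Since \cite[Theorem~10.4]{OP3} shows that $\mathbb{T}(\beta_\infty)=\mathbb{T}$ if and only if $\mathbb{T}((\beta_\infty)^n)=\mathbb{T}$ for all $n>0$, the condition $\mathbb{T}(\beta_\infty)=\mathbb{T}$ is precisely statement~(1) of Theorem~\ref{theor_simple}, namely that $\mathbb{T}(\beta^n)=\mathbb{T}$ for every $n>0$.

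The final step is to feed this into Theorem~\ref{theor_simple}. Once $A$ is known to be $\beta$-simple, that theorem makes statements (1) and (3) equivalent, so $\mathbb{T}(\beta^n)=\mathbb{T}$ for all $n>0$ becomes equivalent to outerness of every $\beta^n$. Chaining the equivalences yields the desired chain: $A\times_\beta \N$ is simple $\Leftrightarrow$ $A_\infty\times_{\beta_\infty}\Z$ is simple $\Leftrightarrow$ $A$ is $\beta$-simple and $\mathbb{T}(\beta)=\mathbb{T}$ $\Leftrightarrow$ $A$ is $\beta$-simple and $\beta^n$ is outer for every $n>0$. The one point that demands care, and my anticipated main obstacle, is the order of the deductions in the forward direction: the implication $(3)\Rightarrow(1)$ of Theorem~\ref{theor_simple} is valid only under the hypothesis of $\beta$-simplicity, so I must first extract $\beta$-simplicity of $A$ from the simplicity of $A_\infty\times_{\beta_\infty}\Z$, and only afterwards convert the spectral condition $\mathbb{T}(\beta^n)=\mathbb{T}$ into the outerness of the $\beta^n$.
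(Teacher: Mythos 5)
Your proposal is correct and follows essentially the same route as the paper's own proof: reduce via the full corner $P(A_\infty\times_{\beta_\infty}\Z)P$ to simplicity of $A_\infty\times_{\beta_\infty}\Z$, apply \cite[Theorem 6.5]{OP1}, translate back to $(A,\beta)$ through Proposition \ref{gauge_inv} and the lemma $\mathbb{T}(\beta)=\mathbb{T}(\beta_\infty)$, and finish with Theorem \ref{theor_simple}. Your explicit treatment of the powers (via $(\beta^n)_\infty=(\beta_\infty)^n$ and \cite[Theorem 10.4]{OP3}) and of the order of deductions only makes precise what the paper's terser proof leaves implicit.
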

\begin{proof}
$A\times_\beta \N$ is simple if and only if $A_\infty\times_{\beta_\infty} \Z$ is simple  if and only if $A_\infty$ is $\beta_\infty$-simple and $\mathbb{T}(\beta_\infty)=\mathbb{T}$  \cite[Theorem 6.5]{OP1} if and only if $A$ is $\beta$-simple and $\mathbb{T}(\beta)=\mathbb{T}$. Therefore, by Theorem \ref{theor_simple} we have that $A$ is $\beta$-simple and $\mathbb{T}(\beta)=\mathbb{T}$ if and only if $A$ is $\beta$-simple and $\beta^n$ is outer for every $n>0$.
\end{proof}

\begin{exem} 
The following example is \cite[Theorem 9.1]{OP3}. Let $A$ be a $C^*$-algebra with a faithful bounded trace, let $A\longrightarrow \mathcal{B}(\mathcal{H})$ be a faithful non-degenerate representation of $A$, and let $V$ be a non-unitary isometry of $\mathcal{B}(\mathcal{H})$ with $VV^*\in M(A)$ such that $VAV^*+V^*AV\subseteq A$. Then suppose that there are no non-trivial ideals $I$ of $A$ such that $VIV^*+V^*IV\subseteq I$. Then we claim that the $C^*$-algebra  $B:=C^*(\{AV^n(V^*)^m\}_{n,m\geq 0})\subseteq\mathcal{B}(\mathcal{H})$ is simple. Indeed, let us define the endomorphism $\beta:A\longrightarrow A$ by $\beta(a)=VaV^*$ for every $a\in A$ that is extendible (since $VV^*\in M(A)$). Clearly satisfies that $\beta(A)$ is a hereditary sub-$C^*$-algebra of $A$, and it does not have any non-trivial $\beta$-invariant ideal. Now, since $\tau$ is a faithful bounded trace of $A$, we can extended it to a faithful bounded trace $\bar{\tau}$ of $M(A)$. Hence, $M(A)$ has no non-unitaries isometries. Therefore, by Theorem \ref{theor_simple} $A\times_\beta\N$ is simple, whence the natural map $A\times_\beta\N\longrightarrow B$ is an isomorphism.
\end{exem}

\section{Graph $C^*$-algebras}

In this section, we  apply the above results to determine the simplicity of certain graph $C^*$-algebras. Though their simplicity is well understood in terms of properties of the graph, we are going to deduce it from the properties of their associated $C^*$-dynamical systems. 

We use the conventions of \cite{Rae}. Let $E=(E^0,E^1,r,s)$ be a countable directed graph; $r,s:E^1\rightarrow E^0$ denote the \emph{range} and the \emph{source} maps of an edge. We say that $E$ is \emph{column-finite} if $|s^{-1}(v)|<\infty$ for every $v\in E^0$.   A vertex $v\in E^0$ is a \emph{sink} (\emph{source}) if $|s^{-1}(v)|=0$ ($|r^{-1}(v)|=0$). A vertex $v\in E^0$ is called \emph{singular} if is either a source or an infinite receiver. We denote by $E^0_{sing}$ the set of all singular vertices.  A \emph{path} $\alpha$ of length $n$ is a concatenation of $n$ edges $e_n\cdots e_1$ with $r(e_i)=s(e_{i+1})$ for $i=1,...,n-1$. Given a path $\alpha$ we denote by $|\alpha|$ its length. Let $E^n$ be the set of all paths of length $n$, and $E^*=\cup_{n\geq 0} E^n$ the set of all the paths of finite length in $E$. Finally, given $\alpha,\eta\in E^*$, we say that $\alpha\in \eta$ if there exist $\rho,\gamma\in E^*$ such that $\eta=\rho\alpha\gamma$.

Recall that the \emph{graph $C^*$-algebra} $C^*(E)$ is the universal $C^*$-algebra generated by  orthogonal projections $\{P_v\}_{v\in E^0}$ and partial isometries $\{S_e\}_{e\in E^1}$, satisfying the following conditions:
\begin{align*}
(CK1) \qquad S^*_eS_f=\delta_{e,f}\cdot P_{s(e)} \qquad \text{for every }e,f\in E^1 \\
(CK2) \qquad P_v=\sum_{r(e)=v}S_eS^*_e\qquad \text{ for every }v\in  E^0_{sing}.
\end{align*}
See \cite{Rae} for a survey on graph $C^*$-algebras. One can naturally define a group homomorphism $\gamma: \mathbb{T}\rightarrow \text{Aut } C^*(E)$, given by $\gamma_z(P_v)=P_v$ and $\gamma_z(S_e)=zS_e$ for every $z\in  \mathbb{T}$, $v\in E^0$ and $e\in E^1$; it is the so-called \emph{gauge action} on $C^*(E)$. An ideal $I$ of $C^*(E)$ is said to be a \emph{gauge invariant ideal} if $\gamma_z(I)=I$ for every $z\in \mathbb{T}$ (see \cite{Bates} and \cite{Bates2}). The \emph{core} sub-$C^*$-algebra of $C^*(E)$ is defined as
$$C^*(E)^\gamma:=\{x\in C^*(E): \gamma_z(x)=x\text{ for every }z\in \mathbb{T}\}\,.$$
We can give another description of the core. For every $n\in\N$ and $v\in E^0$, define
$$\mathcal{F}_n(v):=\{S_\eta S_\rho^*: \eta,\rho\in E^n\text{ with }s(\eta)=s(\rho)=v\}\cong M_{k_{n,v}}(\C)$$
for some $k_{n,v}\in \N$, and let $\mathcal{F}_n=\oplus_{v\in E^0}\mathcal{F}_n(v)$. Now, if we index the vertices $\{v_i\}_{i\geq 0}$, then we define $C_{n.m}:=\sum_{i,j\geq 0}^{n,m}\mathcal{F}_i(v_j)$ for every $n,n\geq 0$. These are finite dimensional sub-$C^*$-algebras of $C^*(E)^\gamma$ with $C_{n,m}\subseteq C_{n,m+1}$ and $C_{n,m}\subseteq C_{n+1,m}$ for every $n,m\geq 0$. Hence,
$$C^*(E)^\gamma=\overline{\bigcup_{n,m\geq 0} C_{n,m}}$$
is an AF-algebra. 

We recall the following result from \cite{AR}, that allows to present certain graph $C^*$-algebras as $C^*$-dynamical systems (we would like to thank the authors for showing us the result even before the releasing of the manuscript). 

\begin{theor}[{\cite[Theorem  9.3]{AR}}]
Let $E$ be a  column finite graph without sinks. If we define the endomorphism $\beta_E:C^*(E)^\gamma\rightarrow C^*(E)^\gamma$ as $\beta_E(z)=TzT^*$ for every $z\in C^*(E)^\gamma$, where 
 $$T=\sum_{e\in E^1}|s^{-1}(s(e))|^{1/2} S_e$$ is an isometry of $M(C^*(E))$, then we have that $C^*(E)\cong C^*(E)^\gamma \times_{\beta_E} \N$.
\end{theor}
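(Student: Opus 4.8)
The plan is to realize $C^*(E)$ as the image of a canonical $*$-homomorphism out of the universal object $C^*(E)^\gamma\times_{\beta_E}\N$, and then to promote that homomorphism to an isomorphism by a gauge-invariant uniqueness argument. First I would set up the covariant pair. Fix a faithful non-degenerate representation of $C^*(E)$ on a Hilbert space $\mathcal{H}$; since the finite sums $\sum_{v\in F}P_v$ form an approximate unit contained in the core, the inclusion $\iota\colon C^*(E)^\gamma\hookrightarrow\mathcal{B}(\mathcal{H})$ is non-degenerate, and $T\in M(C^*(E))$ acts as an isometry on $\mathcal{H}$. The defining identity $\beta_E(z)=TzT^*$ is exactly the covariance relation $\iota(\beta_E(z))=T\iota(z)T^*$, so $(\iota,T)$ is a Stacey covariant representation of $(C^*(E)^\gamma,\beta_E)$, and the universal property produces a $*$-homomorphism $\Phi\colon C^*(E)^\gamma\times_{\beta_E}\N\to C^*(E)$ with $\Phi\circ\iota_\infty=\iota$ and $\Phi(V_\infty)=T$. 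For this to be legitimate I must first check the standing hypotheses on $\beta_E$: that $T^*T=\sum_{v}P_v=1$ by a computation with (CK1) (this is where column-finiteness and the absence of sinks enter, guaranteeing finite normalizing constants and that every $P_v$ occurs); that $\beta_E$ maps the core into itself, which is immediate since $TzT^*$ is gauge-invariant; that $\beta_E$ is injective, by applying $T^*(\,\cdot\,)T$ and using $T^*T=1$; and that $\beta_E(C^*(E)^\gamma)$ is hereditary, i.e.\ $\overline{\beta_E(A)A\beta_E(A)}=\beta_E(A)$.

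For surjectivity, note that the vertex projections $P_v$ and the range projections $S_eS_e^*$ are gauge-fixed, hence lie in $C^*(E)^\gamma$ and so in the image of $\Phi$. A short computation with (CK1) gives $(S_eS_e^*)\,T\,P_{s(e)}=c_e\,S_e$ for a nonzero scalar $c_e$ depending only on $|s^{-1}(s(e))|$, so each $S_e$ is recovered from core elements and $T$; thus every generator of $C^*(E)$ lies in $C^*(\iota(C^*(E)^\gamma),T)$, and $\Phi$ is onto.

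For injectivity, the decisive point is gauge equivariance together with a uniqueness theorem. On generators $\Phi(\gamma_z(\iota_\infty(a)))=a=\gamma_z(a)$ and $\Phi(\gamma_z(V_\infty))=zT=\gamma_z(T)$, so $\Phi$ intertwines the two gauge actions and hence respects the canonical conditional expectations. By Lemma \ref{otroketal} the domain is the Cuntz-Pimsner algebra $\mathcal{O}_X$ of the Hilbert bimodule $X={}_{\beta_E}C^*(E)^\gamma$, to which the gauge-invariant uniqueness theorem applies: an equivariant homomorphism is faithful as soon as it is injective on the coefficient algebra. Since $\Phi\circ\iota_\infty=\iota$ is the injective inclusion of the core, $\Phi$ is injective on $\iota_\infty(C^*(E)^\gamma)$, and uniqueness upgrades this to injectivity of $\Phi$. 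Together with surjectivity this gives the isomorphism $C^*(E)^\gamma\times_{\beta_E}\N\cong C^*(E)$.

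The step I expect to be the main obstacle is confirming that $\beta_E=\operatorname{Ad}T$ is genuinely a well-behaved endomorphism of the core: that it preserves $C^*(E)^\gamma$ with \emph{hereditary} image (a corner endomorphism onto $TT^*C^*(E)^\gamma TT^*$, which one checks via $T^*C^*(E)^\gamma T\subseteq C^*(E)^\gamma$) and that it is extendible to the multiplier level. This is what licenses both the construction of the Stacey crossed product and the identification with $\mathcal{O}_X$ underlying the uniqueness theorem; by contrast the surjectivity is a direct calculation and the injectivity reduces to a citation once equivariance and coefficient-faithfulness are in place.
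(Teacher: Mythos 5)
There is nothing in the paper to compare your argument against: the paper gives no proof of this theorem, but quotes it from \cite{AR} (an Huef--Raeburn), where, as the introduction explains, it is obtained by first proving that the relative Cuntz--Pimsner algebra of an \emph{Exel system} is a Stacey crossed product of its core, and then specializing to the Exel system of a graph. Your route is genuinely different and more self-contained: build the covariant pair $(\iota,T)$, get $\Phi$ from the universal property, prove surjectivity by recovering each generator as $S_e=|s^{-1}(s(e))|^{1/2}(S_eS_e^*)TP_{s(e)}$, and prove injectivity from gauge-equivariance together with the gauge-invariant uniqueness theorem for $\mathcal{O}_X$, entering through Lemma \ref{otroketal}. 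This is a correct strategy, and the technical obligations you single out (that $\beta_E$ preserves the core, is injective, is extendible, and has hereditary image, checked via $T^*C^*(E)^\gamma T\subseteq C^*(E)^\gamma$) are exactly the right ones and all go through. What the Exel-system route of \cite{AR} buys is generality (one theorem covering all such systems); what yours buys is that everything reduces to (CK1)--(CK2) computations plus a standard uniqueness theorem.

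Two points would need attention in a written-out version. First, the normalization: with the exponent $+1/2$ as printed in the statement, (CK1) gives $T^*T=\sum_{v}|s^{-1}(v)|^{2}P_v$, which is not $1$; the coefficient must be $|s^{-1}(s(e))|^{-1/2}$, as Example \ref{exem1}(3) confirms (there $\beta_E(x)=P\otimes x$ with $P$ having entries $1/n$). Your claim that $T^*T=\sum_v P_v=1$ is correct only after this correction, so you should state it. Second, you take for granted that the series defining $T$ converges strictly in $M(C^*(E))$. Left multiplication is fine: in $TS_\mu S_\nu^*$ only the finitely many edges with $s(e)=r(\mu)$ survive, by column-finiteness. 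But on the other side $S_\mu P_{s(\mu)}T=\sum_{r(e)=s(\mu)}c_eS_{\mu e}$ runs over \emph{all} edges received by $s(\mu)$, and if $s(\mu)$ is an infinite receiver whose incoming edges come from infinitely many distinct vertices each emitting a single edge, the partial sums have increments of norm $1$ and the series does not converge strictly. So strict convergence genuinely fails for some column-finite sinkless graphs, and the statement as printed is problematic at infinite receivers; either one reads ``$T$ is an isometry of $M(C^*(E))$'' as a standing hypothesis (which your proof may then legitimately assume), or one restricts to locally finite graphs, but a self-contained proof should make this explicit rather than assert the convergence silently.
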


Notice that the endomorphism $\beta_E:C^*(E)^\gamma\rightarrow C^*(E)^\gamma$ is injective, extendible and $\beta_E(C^*(E)^\gamma)$ is a hereditary sub-$C^*$-algebra of $C^*(E)^\gamma$.

\begin{defi} 
A subset $H\subseteq E^0$ is said to be \emph{hereditary} if, whenever $\eta\in E^*$ with $r(\eta)\in H$, then $s(\eta)\in H$. We say that $H$ is \emph{saturated} if, whenever $|r^{-1}(v)|<\infty$ and $\{s(r^{-1}(v))\}:=\{z\in E^0: z=s(e)\text{ for some }e\in r^{-1}(v)\}\subseteq H$, then $v\in H$. 
\end{defi}

By \cite[Theorem 4.1]{Bates}, there exists a bijection between hereditary and saturated subsets of $E^0$ and gauge invariant ideals of $C^*(E)$, $H\longmapsto K_H$, where $K_H:=\overline{\text{span }}\{S_{\eta}S^*_{\nu}: \eta,\nu\in E^*\text{ with }s(\eta)=s(\nu)\in H\}$. The inverse map is
$K\longmapsto H_K$, where $H_K:=\{ v\in E^0 : P_v\in K\}$.

Now, given a hereditary and saturated subset of $E^0$, we define
$$I_H:=K_H\cap {C^*(E)}^{\gamma}.$$
By Remark \ref{desprescaldra}, $I_H$ is a $\beta_E$-invariant ideal of ${C^*(E)}^{\gamma}$, and it is easy to see that
$$I_H:=\overline{\sum_{v\in H, n\geq 0}\mathcal{F}_n(v)}.$$
On the other side, if $K$ is a gauge invariant ideal of $C^*(E)$, since $I:=K\cap {C^*(E)}^{\gamma}$ is a $\beta_E$-invariant ideal, then we have that the set
$$H_I:=\{v\in E^0: P_v\in I\}$$ 
is a subset of $H_K$. Moreover, since $P_v\in {C^*(E)}^{\gamma}$ for every $v\in E^0$, it is clear that $H_K\subseteq H_I$, whence $H_K=H_I$. Thus, $H_I$ is an hereditary and saturated subset of $E^0$.

In particular, if $I$ is a $\beta_E$-invariant ideal of ${C^*(E)}^{\gamma}$, then $K:=\overline{(C^*(E))\cdot I \cdot (C^*(E))}$ is a gauge invariant ideal of $C^*(E)$ and
$$I_{H_I}=I_{H_K}=K_{H_K}\cap {C^*(E)}^{\gamma}=K\cap {C^*(E)}^{\gamma}=I.$$
Conversely, if $H$ is a hereditary and saturated subset of $E^0$, since $I_H:=K_H\cap {C^*(E)}^{\gamma}$, we conclude that
$$H_{I_H}=H_{K_H}=H.$$

Summarizing, there exists a bijection between the hereditary and saturated subsets of $E^0$ and the $\beta_E$-invariant ideals of ${C^*(E)}^{\gamma}$ defined by the maps
$$H\longmapsto I_H=\overline{\sum_{v\in H, n\geq 0}\mathcal{F}_n(v)} \qquad \text{and} \qquad I\longmapsto H_I=\{v\in E^0: P_v\in I\}. $$

One could be tempted to think that there is a bijection between hereditary sets of $E^0$ and the ideal of ${C^*(E)}^{\gamma}$ such that $\beta_E(I)\subseteq I$, but this is not the case (see Examples \ref{exem1}). 

\begin{theor}[{cf. \cite[Theorem 4.1]{Bates}}] 
If $E$ is a column  finite graph without sinks, then there is a bijection between the closed gauge invariant ideals of $C^*(E)$, the hereditary and saturated subsets of $E^0$ and the $\beta_E$-invariant ideals of ${C^*(E)}^{\gamma}$.
\end{theor}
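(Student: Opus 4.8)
The plan is to obtain the three-way correspondence by composing two bijections that are already available, rather than constructing anything new from scratch. First I would invoke \cite[Theorem 4.1]{Bates}, which provides an order-preserving bijection between the hereditary and saturated subsets $H$ of $E^0$ and the gauge invariant ideals $K_H$ of $C^*(E)$, with inverse $K \mapsto H_K = \{v \in E^0 : P_v \in K\}$. Second, since $C^*(E) \cong C^*(E)^\gamma \times_{\beta_E} \N$ by \cite[Theorem 9.3]{AR} and $\beta_E$ is injective, extendible and has hereditary image, Proposition \ref{gauge_inv} applies with $A = C^*(E)^\gamma$ and $\beta = \beta_E$; it yields a bijection between the gauge invariant ideals of $C^*(E)$ and the $\beta_E$-invariant ideals of $C^*(E)^\gamma$, which by Remark \ref{desprescaldra} is given by $K \mapsto K \cap C^*(E)^\gamma$ with inverse $I \mapsto \overline{C^*(E)\cdot I \cdot C^*(E)}$.

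Next I would write the resulting composite maps explicitly and confirm they close up into mutually inverse bijections. Setting $I_H := K_H \cap C^*(E)^\gamma$ identifies the $\beta_E$-invariant ideal attached to $H$, and the description $I_H = \overline{\sum_{v \in H, n \geq 0} \mathcal{F}_n(v)}$ follows from the AF-structure of the core together with the explicit spanning set of $K_H$. The two composites to check are $H \mapsto I_H \mapsto H_{I_H}$ and $I \mapsto H_I \mapsto I_{H_I}$; the computations $H_{I_H} = H_{K_H} = H$ and, writing $K := \overline{C^*(E)\cdot I \cdot C^*(E)}$, the chain $I_{H_I} = I_{H_K} = K_{H_K} \cap C^*(E)^\gamma = K \cap C^*(E)^\gamma = I$ are precisely the identities established in the discussion preceding the statement.

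The one genuinely delicate point, and where I would concentrate the argument, is the identity $H_K = H_I$ for $I = K \cap C^*(E)^\gamma$. The inclusion $H_I \subseteq H_K$ is immediate, while the reverse inclusion uses that every vertex projection $P_v$ is gauge-fixed, so $P_v \in K$ already forces $P_v \in K \cap C^*(E)^\gamma = I$; this is what guarantees that passing to the core loses no ideal-theoretic information and that $H_I$ is again hereditary and saturated. Once this is in place, order preservation of the triple correspondence is inherited from each of the two constituent bijections, so no separate check is needed and the theorem follows by composition.
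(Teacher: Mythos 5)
Your proposal is correct and follows essentially the same route as the paper: the paper's argument (given in the discussion preceding the theorem) likewise composes the bijection of \cite[Theorem 4.1]{Bates} with the one from Proposition \ref{gauge_inv} and Remark \ref{desprescaldra}, and verifies the composites are mutually inverse via the identity $H_K = H_I$. Your treatment of the key point --- that $P_v$ being gauge-fixed forces $P_v \in K \Rightarrow P_v \in K \cap C^*(E)^\gamma$ --- is exactly the observation the paper uses.
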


\begin{corol}\label{gauge_inv_simple} Let $E$ be  a column finite graph without sinks, then $E^0$ has no non-trivial hereditary and saturated subsets if and only if $C^*(E)^\gamma$ does not have a proper ${\beta_E}$-invariant  ideals.
\end{corol}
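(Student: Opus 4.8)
The plan is to read this corollary off the order-preserving bijection established in the theorem immediately above, between the $\beta_E$-invariant ideals of $C^*(E)^\gamma$ and the hereditary and saturated subsets of $E^0$. Since both ``non-trivial'' (for subsets) and ``proper'' (for ideals) designate elements lying strictly between the two extremes of their respective lattices, the whole argument reduces to checking that this bijection pairs the extreme elements on one side with the extreme elements on the other.

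First I would recall the explicit maps from the discussion preceding the theorem, namely $H\longmapsto I_H=\overline{\sum_{v\in H,\, n\geq 0}\mathcal{F}_n(v)}$ and $I\longmapsto H_I=\{v\in E^0:P_v\in I\}$, which are mutually inverse and manifestly inclusion-preserving. In particular, $H_1\subseteq H_2$ forces $I_{H_1}\subseteq I_{H_2}$ directly from the defining sum.

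Next I would verify the endpoint correspondence. Taking $H=\emptyset$ yields an empty sum, so $I_\emptyset=\overline{0}=0$ is the zero ideal. Taking $H=E^0$ yields $I_{E^0}=\overline{\sum_{v\in E^0,\, n\geq 0}\mathcal{F}_n(v)}$, which coincides with $C^*(E)^\gamma$ by the AF description of the core recalled at the start of this section. Hence the bijection carries the pair of trivial subsets $\{\emptyset,E^0\}$ exactly onto the pair of trivial ideals $\{0,\,C^*(E)^\gamma\}$.

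Finally, because the bijection is inclusion-preserving, it restricts to a bijection between the hereditary and saturated subsets lying strictly between $\emptyset$ and $E^0$ and the $\beta_E$-invariant ideals lying strictly between $0$ and $C^*(E)^\gamma$. Consequently the former collection is empty precisely when the latter is, which is exactly the claimed equivalence. I expect no genuine obstacle here: the single point requiring care is the endpoint check, and that is immediate from the explicit formula for $I_H$ together with the AF presentation of the core.
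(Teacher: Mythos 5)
Your proposal is correct and is exactly the argument the paper intends: the corollary is stated without proof precisely because it follows immediately from the preceding bijection $H\mapsto I_H$, $I\mapsto H_I$, once one notes that $\emptyset\mapsto 0$ and $E^0\mapsto C^*(E)^\gamma$ (the latter by the AF description of the core). Your explicit endpoint check and the observation that the bijection is inclusion-preserving fill in the only details the paper leaves implicit.
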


\begin{exem}\label{exem1} In the following examples we would like to illustrate some consequences of Corollaries \ref{theor_simple_corol} and \ref{gauge_inv_simple} and determine the simplicity of some graph $C^*$-algebras. We would like to remark again that this is well-known  by \cite[Proposition 5.1]{Bates}. However, one can slightly modify some of the examples to get new simple $C^*$-algebra that probably do not arise as graph $C^*$-algebras.

\begin{enumerate}
\item Consider the graph $E$
$$\xymatrix{{\bullet_{v_0}} \ar[r] & {\bullet_{v_1}}\ar[r]& {\bullet_{v_2}}\ar[r] & {\cdots} }$$
Then $E^0$ has no non-trivial hereditary and saturated subsets. We have that ${C^*(E)}^{\gamma}\cong C_0(\N\cup\{0\})$ and the endomorphism ${\beta_E}:{C^*(E)}^{\gamma}\rightarrow {C^*(E)}^{\gamma}$  sends $\chi_{\{i\}}$ (the characteristic function at $i$) to $\chi_{\{i+1\}}$ for every $i\geq 0$.  Then, since $E^0$ does not have non-trivial saturated and hereditary subsets, $C_0(\N\cup\{0\})$ is $\beta_E$-simple. Moreover, since $M(C_0(\N\cup\{0\}))$ is a commutative $C^*$-algebra, it does not have non-unitary isometries. Hence, $C^*(E)$ is simple.

\item Consider the graph $E$
$$\xymatrix{{\bullet_{v_0}}\uloopr{e} \ar[r] & {\bullet_{v_1}}\ar[r]& {\bullet_{v_2}}\ar[r] & {\cdots} }$$
Then ${C^*(E)}^{\gamma}=\mathcal{K}$ (the compact operators of a countable infinite dimensional Hilbert space $\mathcal{H}$), that is simple. Therefore $C^*(E)^\gamma$ is ($\beta_E-$) simple, and thus $E^0$ has no non-trivial hereditary and saturated subsets. Moreover, it is not difficult to see that $\beta_E=\text{Ad }W$ where $W$ is the shift operator of $\mathcal{H}$, whence $\beta_E$ is inner and $C^*(E)$ is not a simple $C^*$-algebra. 

\item This is the graph $C^*$-algebra picture of the algebra $\mathcal{O}_n$. Let $E$ be the graph
$$\xymatrix{{\bullet_{v}}\uloopr{(n)} }$$
with $n$ loops. We have that ${C^*(E)}^{\gamma}$ is isomorphic to the $n$-infinity UHF-algebra $\mathcal{U}_n:=\bigotimes_{i=1}^\infty M_n$ and $\beta_E(x)=P\otimes x$ for every $x\in \mathcal{U}_n$, where 
$P=  \left(\begin{array}{ccc} 1/n & \cdots & 1/n  \\ \vdots & &\vdots \\ 
1/n & \cdots &1/n \end{array}\right)$. Therefore ${C^*(E)}^{\gamma}$ is $\beta_E$-simple, since $\mathcal{U}_n$ is simple. Moreover, since $C^*(E)^\gamma$ is a unital and finite $C^*$-algebra  it does not have non-unitary isometries, and therefore $C^*(E)$ is a simple $C^*$-algebra.

\item An example of the different behaviour of $\beta_E$ and $\beta^2_E$ can be found when the graph  $E$ is
\[
{
\def\labelstyle{\displaystyle}
 \xymatrix{ {\bullet_v}\ar@/^4pt/ [r] \ar@/^8pt/ [r]   & \bullet_w \ar@/^4pt/ [l] }
}\]
In this case $P_v$ and $P_w$ generate two orthogonal ideals of ${C^*(E)}^{\gamma}$, $I_v$ and $I_w$ respectively, both isomorphic to the CAR-algebra $\bigotimes_{n=1}^{\infty} M_2$, and such that ${C^*(E)}^{\gamma}=I_v\oplus I_w$. We have that $\beta_E(x,y)=(y,P\otimes x)$ for every $(x,y)\in I_v\oplus I_w$, where 
$P=  \left(\begin{array}{cc} 1/2 &  1/2  \\ 
1/2 & 1/2 \end{array}\right)$. Therefore ${C^*(E)}^{\gamma}$ is $\beta_E$-simple, but $\beta^2_E(I_v)\subseteq I_v$ and $\beta^2_E(I_w)\subseteq I_w$. Moreover, since $C^*(E)^\gamma$ is a unital and finite $C^*$-algebra,  it does not have non-unitary isometries, and therefore $C^*(E)$ is a simple $C^*$-algebra.

\item Consider the graph $E$ 
\[
{
\def\labelstyle{\displaystyle}
 \xymatrix{\bullet_v \ar[r]_{f}  & \bullet_w \uloopr{e}}
}\]
Observe that $C^*(E)^\gamma=\overline{\text{span }} \{P_v,\,S_{e^m}S^*_{e^m},\, S_{e^mf}S^*_{e^mf}: m\geq 0 \}$. Then, $C^*(E)^\gamma$ is a commutative $C^*$-algebra isomorphic to $C(X)$ where $X=\{1/n: 1\leq n\}\cup\{0\}$, and the endomorphism acts $\beta_E(\chi_{[0,1/n]})= \chi_{[0,1/(n+1)]}$ for every $n\geq 1$.
Since  $\{v\}$ is a saturated and hereditary subset, there exists a proper  ${\beta_E}$-invariant ideal, that corresponds to the ideal $C_0(X\setminus\{0\})$. Given $n\in\N$ let $I_n$ be the ideal of ${C^*(E)}^{\gamma}$ generated by $\chi_{[0,1/n]}$. Observe that  $\beta_E(I_n)\subseteq I_n$ (in particular $\beta_E(I_n)=I_{n+1}$). Therefore ${C^*(E)}^{\gamma}$ posses a infinite countably family of different ideals $I$ such that $\beta_E(I)\subseteq I$.  
\end{enumerate}
\end{exem}

\begin{defi} 
A $C^*$-dynamical system $(A,\beta)$ is said to satisfy the \emph{Cuntz-Krieger uniqueness theorem} if for every faithful Stacey covariant representation $(\pi,V)$ of  $(A,\beta)$  we have that $C^*(\pi,V)\cong A\times_\beta \N$.
\end{defi}

Recall that the graph $E$ satisfies \emph{condition (L)} if every cycle has an entry. A  graph $E$ satisfies condition (L) if and only if, given any $*$-homomorphism $\eta:C^*(E)\rightarrow B$ such that $\eta(P_v)\neq 0$ for every $v\in E^0$, we have that $\eta$ is injective (see e.g. \cite[Section 2]{Rae}). Thus, 

\begin{theor}[{cf. \cite[Theorem 2.4]{Rae} \& \cite[Theorem 2.5]{OP2}}]\label{graph_CK}	 
Let $E$ be a column finite graph  without sinks. Then the following statements are equivalent:
\begin{enumerate}
\item The graph $E$ satisfies condition $(L)$.
\item $({C^*(E)}^{\gamma}, \beta_E)$ satisfies the Cuntz-Krieger uniqueness theorem.
\item $\mathbb{T}(\beta_E)=\mathbb{T}$.
\end{enumerate}
\end{theor}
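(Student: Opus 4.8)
Throughout write $A:={C^*(E)}^{\gamma}$ and use the identification $C^*(E)\cong A\times_{\beta_E}\N$. My strategy is to route both $(2)$ and $(3)$ through the \emph{intersection property} (IP): every nonzero ideal of $A\times_{\beta_E}\N$ has nonzero intersection with $A$. The equivalence $(2)\Leftrightarrow\text{(IP)}$ is a general fact about Stacey crossed products. If (IP) holds and $(\pi,V)$ is a faithful covariant pair, the universal property gives a canonical surjection $\rho\colon A\times_{\beta_E}\N\to C^*(\pi,V)$ with $\rho\circ\iota_\infty=\pi$; its kernel $J$ satisfies $J\cap A=\ker\pi=0$, so $J=0$ by (IP) and $\rho$ is an isomorphism, giving $(2)$. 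Conversely, if some $0\neq I\lhd A\times_{\beta_E}\N$ has $I\cap A=0$, then composing the quotient map onto $(A\times_{\beta_E}\N)/I$ with a faithful representation of the quotient produces a faithful Stacey covariant pair whose canonical map has nonzero kernel $I$, so $(2)$ fails.

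For $\text{(IP)}\Leftrightarrow(3)$, the forward direction is immediate from the definition of $\mathbb{T}(\beta_E)$: for $t\in\mathbb{T}$ and $0\neq I$, the nonzero ideal $I\cap A$ is fixed by $\gamma_t$, whence $0\neq I\cap A\subseteq I\cap\gamma_t(I)$ and $t\in\mathbb{T}(\beta_E)$, so $\mathbb{T}(\beta_E)=\mathbb{T}$. The reverse direction $(3)\Rightarrow\text{(IP)}$ is exactly the Olesen--Pedersen characterization of a full Connes spectrum \cite[Theorem 2.5]{OP2}, applied to the automorphism $\beta_\infty$ on $A_\infty$ and carried back to $A\times_{\beta_E}\N$ through the full corner $P(A_\infty\times_{\beta_\infty}\Z)P$ of Proposition \ref{Morita_eq}, using $\mathbb{T}(\beta_E)=\mathbb{T}(\beta_\infty)$ and the fact that $I\mapsto PIP$ matches intersections with $A_\infty$ and with $A\cong PA_\infty P$.

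It remains to tie in $(1)$. For $(1)\Rightarrow(2)$, let $(\pi,V)$ be faithful and form $\rho$ as above; viewing $\rho$ as a representation of $C^*(E)$, each vertex projection lies in $A$, so $\rho(P_v)=\pi(P_v)\neq 0$ for all $v\in E^0$, and condition (L) together with the graph Cuntz--Krieger uniqueness theorem \cite[Theorem 2.4]{Rae} forces $\rho$ to be injective; hence $C^*(\pi,V)\cong A\times_{\beta_E}\N$. For the converse I argue $\neg(1)\Rightarrow\neg\text{(IP)}$. If (L) fails, fix a cycle $\mu$ of length $n$ with no entry based at $v$; the absence of an entry makes each vertex of $\mu$ regular with its unique incoming edge on $\mu$, so by $(CK2)$ we get $S_\mu S_\mu^*=P_v=S_\mu^*S_\mu$, and $S_\mu$ is a unitary of $P_vC^*(E)P_v$. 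Its spectrum, being invariant under all rotations $w\mapsto z^nw$ coming from $\gamma_z(S_\mu)=z^nS_\mu$, is all of $\mathbb{T}$, so $C^*(S_\mu)\cong C(\mathbb{T})$. A standard construction then yields a Cuntz--Krieger $E$-family evaluating this circle at a point $\lambda\in\mathbb{T}$, giving a representation $\eta$ of $C^*(E)$ with $\eta(P_v)\neq 0$ for every $v$ but with $f(S_\mu)\in\ker\eta\setminus\{0\}$ for any nonzero $f\in C(\mathbb{T})$ vanishing at $\lambda$. Since any $E$-family with all $P_v\neq 0$ is faithful on the AF-core $A$ \cite{Bates,Rae}, the nonzero ideal $\ker\eta$ meets $A$ trivially and (IP) fails. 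Combining, $(1)\Rightarrow(2)\Leftrightarrow\text{(IP)}\Leftrightarrow(3)$ and $\neg(1)\Rightarrow\neg\text{(IP)}$, which closes the circle of equivalences.

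The hard part will be the last step: manufacturing from a cycle without entry a genuine nonzero ideal of all of $C^*(E)$ that avoids the core. The delicate points are confirming that $S_\mu$ is unitary in the corner (where column-finiteness without sinks and $(CK2)$ are essential), that its spectrum is the full circle, and above all that the evaluation $E$-family stays faithful on the core so that $\ker\eta\cap A=0$; the transfer of \cite[Theorem 2.5]{OP2} across the corner in $(3)\Rightarrow\text{(IP)}$ is the other point requiring care, since one must check that $I\mapsto PIP$ genuinely preserves the relevant base-algebra intersections.
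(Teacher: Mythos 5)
Your proposal is correct and is essentially the paper's own argument: the paper supplies no written proof beyond the sentence preceding the statement and the citations in its header, deriving $(1)\Leftrightarrow(2)$ from Raeburn's characterization of condition (L) in terms of homomorphisms that are nonzero on all vertex projections (equivalently, faithful on the AF core) \cite[Theorem 2.4]{Rae}, and $(2)\Leftrightarrow(3)$ from the Olesen--Pedersen intersection-property characterization of full Connes spectrum \cite[Theorem 2.5]{OP2}, carried through the full corner of Proposition \ref{Morita_eq}. Your write-up makes exactly these steps explicit, and the two points you flag as delicate do go through by standard arguments: for the corner transfer, if $0\neq I\lhd A_\infty\times_{\beta_\infty}\Z$, then $K:=I\cap A_\infty$ is nonzero by \cite[Theorem 2.5]{OP2} and satisfies $\beta_\infty(K)\subseteq K$, so choosing $n$ and $0\neq a\in A$ with $\varphi_n(a)\in K$ yields $0\neq\varphi_0(a)=\beta_\infty^n(\varphi_n(a))\in PIP\cap A$; and the evaluation representation attached to a cycle without entry is precisely the standard proof that condition (L) is necessary, as cited in \cite[Section 2]{Rae}.
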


Now we will see that for the dynamical system $({C^*(E)}^{\gamma}, \beta_E)$ associated to a graph $C^*$-algebras, the results of Olesen and Pedersen \cite[Theorem 2.5 \& Theorem 4.6]{OP2} reduces to a simpler way.

\begin{prop}\label{prop2} 
Let $E$ be a column finite graph without sinks and let $({C^*(E)}^{\gamma}, \beta_E)$ be its associated $C^*$-dynamical system. If $\beta_E$ is weakly properly outer then $E$ satisfies condition (L).
\end{prop}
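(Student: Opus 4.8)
The plan is to argue by contraposition: I will assume that $E$ fails condition (L) and produce a nonzero $\beta_E$-invariant ideal $I$ of $C^*(E)^\gamma$ together with an integer $n>0$ for which $\beta_E^n|_I$ is inner; by definition this shows $\beta_E$ is not weakly properly outer. Failure of (L) yields a cycle $\mu=\mu_\ell\cdots\mu_1$ of some length $\ell$ with no entry, based at $v=s(\mu_1)=r(\mu_\ell)$, and I expect the relevant power to be $n=\ell$. At the level of the Connes spectrum this is consistent with Theorem \ref{graph_CK}, which already identifies failure of (L) with $\mathbb{T}(\beta_E)\neq\mathbb{T}$; the work is to upgrade that spectral statement into an honest inner restriction on an invariant ideal, since Theorem \ref{theor_simple} only converts outerness back into full spectrum under a $\beta_E$-simplicity hypothesis that the core need not satisfy.

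First I would extract the rigidity built into the no-entry hypothesis. Writing $w_0=v,w_1,\dots,w_{\ell-1}$ for the vertices of $\mu$, the assumption that each $w_i$ receives only the cycle edge forces every finite path with range a cycle vertex to be an initial segment of a power of $\mu$; in particular $S_\mu$ becomes a unitary of the corner $P_v\,\widetilde{C^*(E)}\,P_v$ (equivalently $S_\mu S_\mu^*=P_v=S_\mu^*S_\mu$), and $\mathrm{Ad}\,S_\mu$ preserves the core because $\gamma_z(S_\mu x S_\mu^*)=S_\mu x S_\mu^*$ for $x\in C^*(E)^\gamma$. This is precisely the circle subalgebra that obstructs the Cuntz--Krieger uniqueness theorem. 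I then take $H$ to be the hereditary and saturated closure of $\{w_0,\dots,w_{\ell-1}\}$ and set $I:=I_H=\overline{\sum_{w\in H,\,m\geq 0}\mathcal{F}_m(w)}$; by the bijection established before Corollary \ref{gauge_inv_simple} (and Remark \ref{desprescaldra}) this $I$ is a $\beta_E$-invariant ideal, and it coincides with $K\cap C^*(E)^\gamma$ for the gauge-invariant ideal $K\lhd C^*(E)$ generated by $P_v$.

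The decisive step, and the one I expect to be the main obstacle, is to produce an isometry $W\in M(I)$ with $\beta_E^\ell|_I=\mathrm{Ad}\,W$. The naive candidate $T^\ell$ cannot serve directly: it is a gauge eigenvector ($\gamma_z(T^\ell)=z^\ell T^\ell$), hence not a multiplier of the core, and $S_\mu$ alone does not implement $\beta_E^\ell$ on all of $I$ because cycle vertices may emit exits which contribute extra length-$\ell$ extensions. The resolution is to note that the no-entry rigidity fixes the incoming path structure at the cycle completely, so that $\beta_E^\ell$ acts on the AF ideal $I$ as a single shift by one cycle-period: the bijectivity of ``going once around $\mu$'' on matrix-unit indices (using column-finiteness) makes this shift an isometry, proper exactly when there are exits and unitary otherwise. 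This is the exact analogue of Example \ref{exem1}(2), where the loop with no entry makes $C^*(E)^\gamma\cong\mathcal{K}$ and $\beta_E$ the unilateral shift. Verifying $\beta_E^\ell(z)=WzW^*$ for all $z\in I$---reducing to the matrix units $S_\eta S_\rho^*$ with source in $H$ and tracking how $T^\ell$ prolongs $\eta,\rho$---is the computational core of the argument. Once $W$ is in hand, $\beta_E^\ell|_I$ is inner, contradicting weak proper outerness, and the contrapositive is complete.
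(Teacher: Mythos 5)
Your strategy is the same as the paper's: argue by contraposition, take the entry-less cycle $\alpha$ of length $n$ based at $v$, form the $\beta_E$-invariant ideal $I$ generated by the cycle projections (which, by the no-entry condition and (CK2), is $\overline{\sum_{k\geq 0}\mathcal{F}_k(v)}$), and derive a contradiction with weak proper outerness by exhibiting an isometry $W\in M(I)$ with $\beta_E^n|_I=\mathrm{Ad}\,W$. Your preliminary observations are all correct and match the paper: every path with range on the cycle is a segment of a power of $\alpha$, $S_\alpha$ is a unitary in the corner $P_vC^*(E)P_v$, the power $n$ is the cycle length, $T^n$ is not a multiplier of the core, and $S_\alpha$ alone does not implement $\beta_E^n$ on $I$.

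The genuine gap is at the step you yourself label ``the computational core'': the isometry $W$ is never constructed, and the justification you offer in its place would fail as stated. You claim $\beta_E^n$ acts on $I$ as ``a single shift by one cycle-period'' whose isometry property follows from ``the bijectivity of going once around $\mu$ on matrix-unit indices''. But $\beta_E^n(z)=T^nz(T^n)^*$, so a matrix unit $S_\gamma S_\rho^*$ with $s(\gamma)=s(\rho)=v$ is sent to the weighted double sum $\sum_{\eta,\eta'}\kappa_\eta^{-1/2}\kappa_{\eta'}^{-1/2}S_{\eta\gamma}S^*_{\eta'\rho}$ over \emph{all} pairs of length-$n$ paths extending $\gamma,\rho$ at their range vertices. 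Whenever some vertex reachable from the cycle emits two or more edges --- exactly the situation in which $I$ is infinite-dimensional and the proposition has content --- a matrix unit is mapped to a genuine superposition of matrix units, so no permutation-type shift of indices can implement $\beta_E^n$, and no bijection of index sets is in play. Any implementing isometry must carry the weights: the paper's $W$ is a sum of terms $\kappa_{i,w}^{-1/2}S_{\eta_{i,w}\gamma_{j,w}}S^*_{\gamma_{j,w}\alpha}$, its isometry property $W^*W=1_{M(I)}$ rests on the normalization $\sum_{\eta\in E^n,\ s(\eta)=w}\kappa_\eta^{-1}=1$ (valid because $E$ is column-finite and has no sinks), the infinite sum must be shown to converge strictly in $M(I)$ before $\mathrm{Ad}\,W$ even makes sense, and the identity $WzW^*=\beta_E^n(z)$ must then be checked on matrix units. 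None of this is supplied in your proposal, and since the rest of the argument (the ideal, its invariance, the contradiction) is routine, this missing construction is the actual mathematical content of the proposition. A softer way to close the gap, different from the paper, would be to note that the no-entry condition forces $I$ to decompose as a finite direct sum of ideals, each isomorphic to $\mathcal{K}$ or to a matrix algebra, each preserved by $\beta_E^n$ with hereditary image, and that every injective endomorphism of $\mathcal{K}$ with hereditary image is of the form $\mathrm{Ad}\,W$ for an isometry $W$ in the multiplier algebra; but your text does not take that route either.
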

\begin{proof}
Suppose that  $\beta_E$ is weakly properly outer and that $E$ does not satisfy condition (L), i.e., there exists a cycle $\alpha$ without an entry. We can suppose that $\alpha=e_n\cdots e_1$ with $e_i\in E^1$ and $v_i=r(e_i)$ for $i=1,\ldots,n$, such that $s(e_1)=r(e_n)=v_n$ and $r(e_i)\neq v_n$ for every $i\neq n$. 
Let $H_\alpha=\{v_i\}_{i=1,\ldots,n}$, and let $I$ be the ideal of ${C^*(E)}^{\gamma}$ generated by $\{P_v\}_{v\in H_\alpha}$. Observe that, since $\alpha$ does not have any exit, by (CK2) we have that 
$$I=\overline{\sum_{k\geq 0}\mathcal{F}_k(v_n)  }\,.$$ 

Given $w\in E^0$, let $\{\eta_{i,w}\}_{i=1}^{\nu_w}\subseteq E^n$ be the set of paths such that $s(\eta_{i,w})=w$ (a finite number since $E$ is column finite),. Given any $z\in E^0_{v_n}$, where 
$$E^0_{v_n}:=\{z\in E^0: \text{ exists }\eta\in E^* \text{ with }s(\eta)=v_n \text{ and } r(\eta)=z\}\,,$$
consider all the paths $\{\gamma_{j,z}\}_{j\in\Delta_z}\subseteq E^*$ such that $s(\gamma_{j,z})=v_n$ and $r(\gamma_{j,z})=z$. Observe that $1\leq |\{\gamma_{j,z}\}_{j\in\Delta_z}|\leq \infty$. Given any path $\gamma_{j,z}$, we define $\kappa_{i,z}:=|s^{-1}(s(f_n))|\cdots |s^{-1}(s(f_1))|<\infty$ for $f_n\cdots f_1=\eta_{i,z}$ with $f_i\in E^1$. Then, define the formal sums (we still not determine where their converge to)
 $$V_w:=\sum_{j\in \Delta_w, i=1}^{\nu_w} \kappa_{i,w}^{-1/2}S_{\eta_{i,w}\gamma_{j,w}}S^*_{\gamma_{j,w}\alpha}\qquad \mbox{ if }w\in E^0_{v_n}\setminus H_\alpha\,,$$
$$V_{v_k}=\sum_{i=1 }^{\nu_{v_k}} \kappa_{i,v_k}^{-1/2} S_{\eta_{i,v_k} e_k\cdots e_1}   S^*_{e_k\cdots e_1\alpha} \qquad \mbox{ for }1\leq k\leq n-1$$
and 
$$V_{v_n}=\sum_{i=1 }^{\nu_{v_n}} \kappa_{i,v_n}^{-1/2} S_{\eta_{i,v_n}}   S^*_{\alpha}\,.$$
We claim that $\sum_{w\in E^0_{v_n}} V_w$ converges with the strong topology in $M(I)$. Indeed, recall that $I=\overline{\text{span }}\{S_{\gamma_{i,w}} S^*_{\gamma_{j,z}}: |\gamma_{i,w}|=|\gamma_{j,z}|\text{ for }z,w\in E^0_{v_n}\}$, so it is enough to see that for every $v,w\in E^0_{v_n}$ and $k,l\in\N$ such that  $|\gamma_{k,w}|=|\gamma_{l,z}|$ then $(\sum_{u\in E^0_{v_n}} V_u)S_{\gamma_{k,w}} S^*_{\gamma_{l,z}}$ and $S_{\gamma_{k,w}} S^*_{\gamma_{l,z}}(\sum_{u\in E^0_{v_n}} V_u)$ are elements of $I$ of norm less or equal to $1$.  Observe that 
$$(\sum_{u\in E^0_{v_n}} V_u)S_{\gamma_{k,w}} S^*_{\gamma_{l,z}}=V_{w}S_{\gamma_{k,w}} S^*_{\gamma_{l,z}}=\sum_{i=1}^{\nu_w} \kappa_{i,w}^{-1/2}S_{\eta_{i,w}\gamma_{k,w}}S^*_{\gamma_{l,z}\alpha}\in I\,.$$

Now we have that 
\begin{align*}
\|\sum_{i=1}^{\nu_w} \kappa_{i,w}^{-1/2}S_{\eta_{i,w}\gamma_{k,w}}S^*_{\gamma_{l,z}\alpha}\|^2 & =\|(\sum_{i=1}^{\nu_w} \kappa_{i,w}^{-1/2}S_{\eta_{i,w}\gamma_{k,w}}S^*_{\gamma_{l,z}\alpha})^*\sum_{i=1}^{\nu_w} \kappa_{i,w}^{-1/2}S_{\eta_{i,w}\gamma_{k,w}}S^*_{\gamma_{l,z}\alpha}\| \\
& =\|\sum_{i=1}^{\nu_w} \kappa_{i,w}^{-1}S_{\gamma_{l,z}\alpha}S^*_{\gamma_{l,z}\alpha}\|\leq \|S_{\gamma_{l,z}}S^*_{\gamma_{l,z}}\|=1
\end{align*}
as desired. Analogously, we can see that $S_{\gamma_{k,w}} S^*_{\gamma_{l,z}}(\sum_{u\in E^0_{v_n}} V_u)$ converges to an element of $I$ of norm less or equal than $1$. Define $V:=\sum_{w\in E^0_{v_n}} V_w$. Then we have that $V^*V=1_{M(I)}$. One can easily check that $VzV^*={\beta_E}^n(z)$ for every $z\in I$, so ${\beta_E}^n_{|I}$ is inner, a contradiction with the hypothesis.
\end{proof} 

\begin{rema}\label{minimal} 
In the proof of Proposition \ref{prop2} we prove that, if $E$ does not satisfy condition (L), i.e., there exists a loop $\alpha$ without entries, then given any vertex $v\in E^0$ of the loop $\alpha$ we have that  $$I:=\overline{\sum_{n\geq 0} \mathcal{F}_k(v)} \,,$$  is a  $\beta_E$-invariant ideal of ${C^*(E)}^{\gamma}$   such that ${\beta_E}^n_{|I}$ is inner for some $n>0$. Then $H_{I}$ is a hereditary and saturated subset of $E^0$ containing $H_\alpha:=\{v\in E^0: v\in \alpha\}$ such that $I=I_{H_{I}}$.  But since $I_{H_I}\subseteq I_H$ for every hereditary and saturated subset $H$ of $E^0$  containing $H_{\alpha}$ we have that $H_I$ is the minimal hereditary and saturated subset of $E^0$ containing $H_\alpha$. Thus $I$ is a minimal $\beta_E$-invariant ideal such that  ${\beta_E}^n_{|I}$ is inner.

Observe also that in general $I$ is a non-simple sub-$C^*$-algebra of ${C^*(E)}^{\gamma}$. For example, if $n>1$, then the ideal generated by $P_{v_i}$ for every $1\leq i\leq n$ is proper (one can check easily that it cannot contain $P_{v_j}$ for $i\neq j$). 
\end{rema}

\begin{prop}\label{prop3} 
Let $E$ be a column finite graph without sinks and let $({C^*(E)}^{\gamma}, \beta_E)$ be its associated $C^*$-dynamical system. If $E$ satisfies condition (L) then $\beta_E$ is weakly properly outer.
\end{prop}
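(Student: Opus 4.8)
\emph{Strategy.} I would prove the contrapositive form as a contradiction at the level of the dilated automorphism. Set $A:=C^*(E)^\gamma$, fix a nonzero $\beta_E$-invariant ideal $I\lhd A$ and an integer $n>0$, and suppose towards a contradiction that $\beta_E^n|_I=\mathrm{Ad}\,V$ for some isometry $V\in M(I)$; the goal is to clash with condition (L). First I would record that $(I,\beta_E|_I)$ is again a $C^*$-dynamical system of the required type: $\beta_E|_I$ is injective (a restriction of an injective map), it is extendible, and it has hereditary image in $I$, all of this because $I=I_H$ is itself a $\beta_E$-invariant ideal and, by Lemma \ref{inv_ideal}, invariance is exactly the identity $\beta_E(A)I=\beta_E(I)A$. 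Hence every construction of Section~1 applies to $(I,\beta_E|_I)$, and under the correspondence between $\beta_E$-invariant ideals of $A$ and $\beta_{E,\infty}$-invariant ideals of $A_\infty$ (Proposition \ref{gauge_inv}) the ideal $I$ matches a nonzero $\beta_{E,\infty}$-invariant ideal $J\lhd A_\infty$ with $J=\varinjlim(I,\beta_E|_I)$ and $\beta_{E,\infty}|_J=(\beta_E|_I)_\infty$.

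\emph{Transfer to a unitary implementation.} The core step is to promote the isometric implementation on $I$ to a unitary implementation on $J$. For this I would use a cofinal-subsequence trick: the $n$-th power system $\varinjlim(I,\beta_E^n|_I)$ is precisely the cofinal subsystem of $\varinjlim(I,\beta_E|_I)$ indexed by $0,n,2n,\dots$, so the two limits agree with $J$, and under this identification the one-step shift of the power system is exactly $\beta_{E,\infty}^n|_J$. Since by hypothesis the connecting map $\beta_E^n|_I$ of the power system equals $\mathrm{Ad}\,V$ with $V$ an isometry, the telescoping construction recorded just before Theorem \ref{theor_simple} (forming $U:=\sum_{i\ge 0}\widehat{\varphi_i}(V)$) produces a \emph{unitary} $U\in M(J)$ with $\beta_{E,\infty}^n|_J=\mathrm{Ad}\,U$. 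Thus the power $\beta_{E,\infty}^n$ becomes inner after restriction to the nonzero invariant ideal $J$.

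\emph{Spectral conclusion.} Finally I would feed in condition (L). By Theorem \ref{graph_CK} (or, to avoid any dependence on this section, by \cite[Theorem 2.5]{OP2} applied directly), condition (L) yields $\mathbb{T}(\beta_E)=\mathbb{T}$, and hence $\mathbb{T}(\beta_{E,\infty})=\mathbb{T}$ by the lemma identifying the two Connes spectra. By \cite[Theorem 10.4]{OP3} this forces $\mathbb{T}(\beta_{E,\infty}^n)=\mathbb{T}$ for every $n>0$ and makes each $\beta_{E,\infty}^n$ properly outer in the sense of Elliott \cite{El}; in particular its restriction to the nonzero invariant ideal $J$ cannot coincide with $\mathrm{Ad}\,U$ for any unitary $U\in M(J)$. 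This contradicts the previous paragraph, so no isometry $V$ can implement $\beta_E^n|_I$, i.e. $\beta_E^n|_I$ is outer. Since $I$ and $n$ were arbitrary, $\beta_E$ is weakly properly outer.

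\emph{Main obstacle.} The delicate point is the transfer step, and specifically the passage from an \emph{isometric} implementation of the power $\beta_E^n|_I$ on the ideal to a genuine \emph{unitary} implementation of $\beta_{E,\infty}^n$ on the matching invariant ideal $J$. What makes the non-unitary isometry harmless in the limit is exactly the combination of the cofinal reindexing (so that $\beta_E^n|_I$ becomes the connecting map of a system whose limit is $J$) with the telescoping $U=\sum_i\widehat{\varphi_i}(V)$, whose successive summands absorb the defect $1-VV^*$ stage by stage; verifying that this $U$ is truly unitary in $M(J)$ and that it intertwines correctly is the step I would check most carefully. A secondary bookkeeping point is confirming that $(I,\beta_E|_I)$ inherits extendibility and hereditariness of the image so that the whole apparatus is legitimately available on the ideal.
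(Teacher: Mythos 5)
Your proposal is correct, but it takes a genuinely different route from the paper. The paper's proof stays entirely inside the graph algebra: assuming $\beta_E^n|_I=\text{Ad }W$ on a nonzero $\beta_E$-invariant ideal $I=\overline{\sum_{v\in H_I,k\geq 0}\mathcal{F}_k(v)}$, it forms the unitary $U=W^*T^n\in M(K_{H_I})$, builds explicit partial isometries $X_{i,v}=S^*_{\eta_{i,v}}U$, and after a norm-approximation argument extracts a cycle $\gamma$ that can have no entry, contradicting condition (L) directly; no Connes-spectrum input appears. You instead compose two abstract implications: condition (L) $\Rightarrow\mathbb{T}(\beta_E)=\mathbb{T}$ (Theorem \ref{graph_CK}, which the paper records as known, from \cite{Rae} and \cite[Theorem 2.5]{OP2}, before Propositions \ref{prop2} and \ref{prop3}, so there is no circularity), and $\mathbb{T}(\beta_E)=\mathbb{T}\Rightarrow$ weak proper outerness, obtained by dilating an inner power $\beta_E^n|_I=\text{Ad }V$ to an inner restriction $\beta_{E,\infty}^n|_J=\text{Ad }U$ of the automorphism and clashing with proper outerness via \cite[Theorem 10.4]{OP3}. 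Your two supporting steps do hold: the cofinal identification $\varinjlim(I,\beta_E^n|_I)\cong J=\overline{\bigcup_k\varphi_k(I)}$ with $(\beta_E^n|_I)_\infty=\beta_{E,\infty}^n|_J$ is routine (it needs only $\beta_E(I)\subseteq I$ and injectivity; note that Proposition \ref{gauge_inv} does not literally provide this correspondence, but it is easy to verify directly), and the telescoping remark is true, the key point being the identity $(1-VV^*)V^m=0$ valid for any isometry, which makes the sequence $\widehat{\varphi_j}(V)$ eventually constant against every element of $\bigcup_k\varphi_k(I)$, so that its strict limit is a unitary implementing the dilated automorphism; extendibility of $\text{Ad }V$ is automatic, so the apparatus is legitimately available on the ideal. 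The trade-off is clear: your argument is shorter and in fact graph-free --- it really proves the general implication $(3)\Rightarrow(5)$ of Theorem \ref{graph_CK_outer}, valid for any extendible injective endomorphism with hereditary image of a separable $C^*$-algebra --- but it leans on Theorem \ref{graph_CK} and on the paper's unproved telescoping remark, whereas the paper's longer computation is self-contained at the graph level and establishes $(1)\Rightarrow(5)$ without ever passing through the Connes spectrum.
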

\begin{proof}
Suppose that $E$ satisfies condition (L) and there exist a non-zero ${\beta_E}$-invariant ideal $I$ of ${C^*(E)}^{\gamma}$ such that ${\beta^n_E}_{|I}$ is inner for some $n>0$. Hence, $I=\overline{\sum_{v\in H_I, k\geq 0} \mathcal{F}_k(v)}$, where $H_I$ is a hereditary and saturated subset of $E^0$. So, there is a gauge-invariant ideal of $C^*(E)$, say $K_{H_I}$, generated by $\{P_v\}_{v\in H_I}$. Recall that the core $K_{H_I}^\gamma$ is precisely $I$. Now, there exists an isometry $W\in M(I)$ such that ${\beta_E}^n(z)=WzW^*$ for every $z\in I$. Since $I$ contains an approximate unit for $K_{H_I}$ (see for example \cite[Lemma 3.4]{AR2}) we can see $M(I)$ as a sub-$C^*$-algebra of $M(K_{H_I})$.

Define $U:=W^*T^n$ a unitary in $M(K_{H_I})$ (we are also using that $TI\subseteq IT$).  Then, for every $z\in I$ we have $z=UzU^*$
Observe that, given any $y\in I$, we have that that $yU=Uy$.

We claim that $H_I\subseteq E^0$ cannot have sources. Indeed, if $v\in H_I$ is a source, then $P_v\in I$, and hence 
$$P_v=UP_vU^*=W^*T^n P_vT^{n*} W\,.$$
We have that $W^*T^n P_v=\sum_{l}\lambda_l S_{\eta_l}S^*_{\rho_l}\in P_v K_{H_I}P_v\subseteq P_v C^*(E)P_v$, with $\lambda_l\in \C$  and $\eta_l,\rho_l\in E^*$ with $r(\eta_l)=r(\rho_l)=v$ and $|\eta_l|=|\rho_l|+n$. But this contradicts the fact that $v$ is a source. 

Now, given any $v\in E^0$, let  $\{\eta_{i,v}\}_{i=1}^{\nu_v}$ be the set of all the paths in $E^n$ with $r(\eta_{i,v})=v$. So, given $v\in H_I$ and $i\leq \nu_v$, we define
$$0\neq X_{i,v}:=S^*_{\eta_{i,v}}U\in P_{s(\eta_{i,v})}{C^*(E)}^{\gamma}P_v\,.$$ 
If $\mu, \gamma\in E^m$  with $s(\mu)=s(\gamma)\in H_I$ then we have that $S_{\mu}S^*_\gamma\in I$. So, for every $w,z\in H_I$, $1\leq k \leq \nu_w$ and $1\leq l\leq \nu_z$, we have that
\begin{enumerate}
\item $X_{k,w}(S_{\mu}S^*_\gamma) X^*_{l,z}=S^*_{\eta_{k,w}}U(S_{\mu}S^*_\gamma)U^* S^*_{\eta_{l,z}}= 
S^*_{\eta_{k,w}}(S_{\mu}S^*_\gamma)S_{\eta_{l,z}}$, 
\item $X_{k,w} X^*_{l,z}=\delta_{w,z} \cdot \delta_{k,l} \cdot P_{s(\eta_{k,w})}$,
\item $X^*_{k,w} S_{\mu}S^*_{\gamma}X_{l,z}  =U^*S_{\eta_{k,w}}S_{\mu}S^*_{\gamma}S^*_{\eta_{l,z}} U =\delta_{r(\mu),s(\eta_{k,w})} \cdot \delta_{r(\gamma),s(\eta_{j,z})} \cdot  S_{\eta_{k,w}}S_{\mu}S^*_{\gamma}S^*_{\eta_{l,z}}$,
\end{enumerate}
while given $v\in H_I$ and $1\leq i\leq\nu_v$, we have that 
$$X_{i,v}X^*_{i,v}X_{i,v}=S^*_{\eta_{i,v}}UU^*S_{\eta_{i,v}}S^*_{\eta_{i,v}}U =S^*_{\eta_{i,v}}S_{\eta_{i,v}}S^*_{\eta_{i,v}}U=S^*_{\eta_{i,v}}U=X_{i,v}\,,$$
so $X_{i,v}$ is a partial isometry in $P_{s(\eta_{i,v})}{C^*(E)}^{\gamma}S_{\eta_{i,v}}S^*_{\eta_{i,v}}$.

Now, choose any $v\in H_I$ and $1\leq i\leq  \nu_v$, and  consider the isometry $X_{i,v}S_{\eta_{i,v}}\in P_{s(\eta_{i,v})} C^*(E)P_{s(\eta_{i,v})}$. Given any $\varepsilon>0$, there exist $m\in\N$, $\lambda_j\in \C\setminus\{0\}$ and $\alpha_j,\beta_j\in E^*$ with $|\alpha_j|=|\beta_j|+n$ and $r(\alpha_j)=r(\beta_j)=s(\eta_{i,v})$  such that $$\| X_{i,v}S_{\eta_{i,v}} - \sum_{j=1}^m \lambda_j S_{\alpha_j} S^*_{\beta_j}\|<\varepsilon\,.$$
Suppose that $|\beta_1|\geq |\beta_i|$ for every $i\leq m$. Then we have that 
\begin{align*} 
\|p_{s(\eta_{i,v})} - y^*y\| & =\| (X_{i,v}S_{\eta_{i,v}})^*X_{i,v}S_{\eta_{i,v}} - y^*y\| \\
& \leq \| (X_{i,v}S_{\eta_{i,v}})^*X_{i,v}S_{\eta_{i,v}} - y^*X_{i,v}S_{\eta_{i,v}}\|+\| y^*X_{i,v}S_{\eta_{i,v}} - y^*y\| \\
& \leq \| (X_{i,v}S_{\eta_{i,v}})^* - y^*\|\|X_{i,v}S_{\eta_{i,v}}\|+\| X_{i,v}S_{\eta_{i,v}} - y\|\|y^*\| \\
& \leq \varepsilon\cdot 1 + \varepsilon (1+\varepsilon)
\end{align*}
Thus, if $\varepsilon<1/4$, we have that $y^*y$ is invertible in $P_{s(\eta_{i,v})}{C^*(E)}^{\gamma}P_{s(\eta_{i,v})}$. Hence, $yS_{\beta_j}\neq 0$ for every $1\leq j\leq m$. Thus, 
\begin{align*}
\| S_{\beta_1} S^*_{\beta_1} - yS_{\beta_1} S^*_{\beta_1}y^*\| & =\|(X_{i,v}S_{\eta_{i,v}}) S_{\beta_1} S^*_{\beta_1}(X_{i,v}S_{\eta_{i,v}})^*-yS_{\beta_1} S^*_{\beta_1}y^*\|  \\
 & \leq   \|(X_{i,v}S_{\eta_{i,v}}) S_{\beta_1} S^*_{\beta_1}(X_{i,v}S_{\eta_{i,v}})^*-(X_{i,v}S_{\eta_{i,v}}) S_{\beta_1} S^*_{\beta_1}y^*\|+  \\
  & \qquad + \|(X_{i,v}S_{\eta_{i,v}}) S_{\beta_1} S^*_{\beta_1}y^*-yS_{\beta_1} S^*_{\beta_1}y^*\| \\
   &  \leq \|(X_{i,v}S_{\eta_{i,v}}) S_{\beta_1} S^*_{\beta_1}\| \varepsilon + \|S_{\beta_1} S^*_{\beta_1}y^*\|\varepsilon \\
   & \leq \varepsilon + \varepsilon (1+\varepsilon)
\end{align*}
and therefore $yS_{\beta_1} S^*_{\beta_1}y^*$  is invertible in $S_{\beta_1} S^*_{\beta_1} {C^*(E)}^{\gamma}S_{\beta_1} S^*_{\beta_1} $. So, $0\neq S^*_{\beta_1}yS_{\beta_1} =\sum_{j=1}^{m'} \lambda'_j S_{\gamma_j}$, where $\gamma_j\in E^n$ with $s(\gamma_j)=r(\gamma_j)=r(\beta_1)=s(\eta_{i,v})$ for every $1\leq j\leq m'$. Hence, the $\gamma_j$s  are cycles. Let $\gamma=\gamma_1$. Since by assumption $E$ satisfies condition (L), we have  that $\gamma$ has an entry. Therefore, there exists $\eta\in E^*$ such that $\gamma \notin \eta$. So, we have that $S_{\beta_1 \eta} S^*_{\beta_1 \eta}\in  S_{\beta_1} S^*_{\beta_1} {C^*(E)}^{\gamma}S_{\beta_1} S^*_{\beta_1} $ and hence $S^*_{\beta_1 \eta} S_{\beta_1}=S^*_{\eta} S_{\gamma}=0$, that contradicts the fact that $S_{\beta_1}S^*_{\beta_1}$ is invertible in $S_{\beta_1} S^*_{\beta_1} {C^*(E)}^{\gamma}S_{\beta_1} S^*_{\beta_1} $.
\end{proof}

Summarizing, we have the following result.

\begin{theor}\label{graph_CK_outer}	 
Let $E$ be a column finite graph  without sinks. Then the following statements are equivalent:
\begin{enumerate}
\item The graph $E$ satisfies condition $(L)$.
\item $({C^*(E)}^{\gamma}, \beta_E)$ satisfies the Cuntz-Krieger uniqueness theorem.
\item $\mathbb{T}(\beta_E)=\mathbb{T}$.
\item There is no $\beta_E$-invariant ideal $I$ of $C^*(E)^\gamma$ and $n\in \N$ such that ${\beta^n_E}_{|I}=\text{Ad }V$, where $V\in M(I)$ is an isometry.
\item $\beta_E$ is weakly properly outer.
\end{enumerate}
\end{theor}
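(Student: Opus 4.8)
The plan is to recognize that this theorem assembles results that are, for the most part, already in hand, supplemented by one purely definitional identification. First I would dispatch the block $(1)\Leftrightarrow(2)\Leftrightarrow(3)$: this is verbatim the content of Theorem \ref{graph_CK}, so I would simply invoke it and move on.

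Next I would treat $(4)\Leftrightarrow(5)$, which carries no analytic content and is only a matter of unwinding the definitions. By definition $\beta_E$ is weakly properly outer exactly when, for every $\beta_E$-invariant ideal $I$ and every $n>0$, the restriction $\beta_E^n|_I\colon I\to I$ is outer; and ``outer'' means ``not inner,'' i.e.\ that there is no isometry $V\in M(I)$ with $\beta_E^n|_I=\text{Ad}\,V$. Hence the assertion of $(5)$ is the universally quantified statement whose exact negation is the existence claimed in $(4)$, so $(4)$ and $(5)$ say the same thing. The only point to record is that $\beta_E(I)\subseteq I$ for a $\beta_E$-invariant ideal $I$ (cf.\ Lemma \ref{invariant} and the proof of Lemma \ref{inv_ideal}), so that $\beta_E^n|_I$ is a genuine endomorphism of $I$ and the question of its innerness is well posed.

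It remains to bridge the two blocks via $(1)\Leftrightarrow(5)$, and this is where the real work sits---though it, too, is already done. Proposition \ref{prop3} supplies $(1)\Rightarrow(5)$ and Proposition \ref{prop2} supplies $(5)\Rightarrow(1)$; combining these with the equivalences above closes the cycle and shows all five statements are equivalent. Accordingly I expect no genuine obstacle at the level of this theorem: the substantive implications---relating the combinatorial condition (L) on cycles to the non-innerness of powers of $\beta_E$ on invariant ideals---are precisely Propositions \ref{prop2} and \ref{prop3}, while everything else is citation or definition-chasing. The lone subtlety worth a sentence of care is the careful reading of ``outer'' that renders $(4)\Leftrightarrow(5)$ transparent.
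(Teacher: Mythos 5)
Your proposal is correct and follows exactly the route the paper takes (the paper states this theorem as a summary, with (1)$\Leftrightarrow$(2)$\Leftrightarrow$(3) from Theorem \ref{graph_CK}, (1)$\Leftrightarrow$(5) from Propositions \ref{prop2} and \ref{prop3}, and (4)$\Leftrightarrow$(5) as a direct unwinding of the definitions of \emph{inner}, \emph{outer}, and \emph{weakly properly outer}). You have also attributed the two propositions to the correct directions of implication, so there is nothing to add.
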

 
Finally, using the characterization of simplicity of $C^*(E)$ in terms of properties of the graph $E$ \cite[Proposition 5.1]{Bates}, the representation of an Huef and Raeburn of the graph $C^*$-algebra $C^*(E)$ as the Stacey crossed product ${C^*(E)}^{\gamma}\times_{\beta_E} \N$, joint with Corollary \ref{theor_simple_corol} and Theorem \ref{graph_CK_outer}, we conclude the desired result.

\begin{theor}[{cf. \cite[Proposition 5.1]{Bates}}]\label{theor1} 
Let $E$ be a column finite graph without sinks. Then the following statements are equivalent:
\begin{enumerate}
\item ${C^*(E)}^{\gamma}\times_{\beta_E} \N$ is simple.
\item $E$ does not have non-trivial hereditary and saturated subsets and satisfies condition (L).
\item ${C^*(E)}^{\gamma}$ does not have any proper ${\beta_E}$-invariant ideal and $\beta^n_E$ is outer for every $n\geq 1$. 
\end{enumerate}
\end{theor}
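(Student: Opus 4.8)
The plan is to assemble the three equivalences from the machinery already in place. Since $C^*(E)^\gamma$ is an AF-algebra and $\beta_E$ is an injective, extendible endomorphism whose image is a hereditary sub-$C^*$-algebra, Corollary \ref{theor_simple_corol}, Corollary \ref{gauge_inv_simple} and Theorem \ref{graph_CK_outer} all apply verbatim with $A=C^*(E)^\gamma$ and $\beta=\beta_E$. The whole proof is then bookkeeping among these results; no new analysis is needed here.

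First I would dispatch $(1)\Leftrightarrow(3)$, which is immediate. Corollary \ref{theor_simple_corol} states exactly that ${C^*(E)}^{\gamma}\times_{\beta_E}\N$ is simple if and only if $C^*(E)^\gamma$ is $\beta_E$-simple (equivalently, has no proper $\beta_E$-invariant ideal) and $\beta_E^n$ is outer for every $n>0$. These are precisely the two clauses appearing in statement $(3)$, so the equivalence requires nothing further.

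Next, for $(2)\Leftrightarrow(3)$ I would match the two conjuncts of each statement separately. The first conjuncts — ``$E^0$ has no non-trivial hereditary and saturated subset'' versus ``$C^*(E)^\gamma$ has no proper $\beta_E$-invariant ideal'' — are equivalent by Corollary \ref{gauge_inv_simple}. For the second conjunct I would invoke Theorem \ref{graph_CK_outer}, in which condition $(L)$ is equivalent to $\beta_E$ being weakly properly outer. The direction $(2)\Rightarrow(3)$ is then clear: specialising weak proper outerness to the invariant ideal $I=C^*(E)^\gamma$ yields that every $\beta_E^n$ is outer.

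The one point deserving care is $(3)\Rightarrow(2)$, and this is where I expect the only genuine subtlety to lie: plain outerness of $\beta_E^n$ for all $n$ is \emph{a priori} weaker than weak proper outerness, which demands outerness of $\beta_E^n|_I$ for \emph{every} $\beta_E$-invariant ideal $I$. Here the first conjunct of $(3)$ closes the gap. Once $C^*(E)^\gamma$ has no proper $\beta_E$-invariant ideal, the only $\beta_E$-invariant ideals are $0$ and $C^*(E)^\gamma$, so weak proper outerness collapses to outerness of $\beta_E^n$ on the whole algebra. Thus the two conjuncts of $(3)$ together deliver weak proper outerness, hence condition $(L)$ via Theorem \ref{graph_CK_outer}, completing $(3)\Rightarrow(2)$. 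The substantive content of the whole circle of ideas lives upstream — in Propositions \ref{prop2} and \ref{prop3} feeding Theorem \ref{graph_CK_outer}, and in the Connes-spectrum analysis behind Corollary \ref{theor_simple_corol} — so the only thing to stay vigilant about is not to assert $(3)\Rightarrow(2)$ without explicitly using the absence of non-trivial invariant ideals, since that hypothesis is exactly what bridges plain outerness and weak proper outerness.
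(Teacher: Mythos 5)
Your proof is correct, and all the hypotheses you need do hold here (the paper notes right after the an Huef--Raeburn theorem that $\beta_E$ is injective, extendible, and has hereditary image, so Corollary \ref{theor_simple_corol}, Corollary \ref{gauge_inv_simple} and Theorem \ref{graph_CK_outer} indeed apply). The route differs slightly from the paper's. The paper's (one-sentence) proof obtains $(1)\Leftrightarrow(2)$ by importing the known graph-theoretic characterization of simplicity of $C^*(E)$ (\cite[Proposition 5.1]{Bates}) through the an Huef--Raeburn isomorphism $C^*(E)\cong C^*(E)^\gamma\times_{\beta_E}\N$, and then gets $(3)$ via Corollary \ref{theor_simple_corol} and Theorem \ref{graph_CK_outer}. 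You instead prove $(2)\Leftrightarrow(3)$ entirely internally: the first conjuncts match by Corollary \ref{gauge_inv_simple}, and the second conjuncts match because condition (L) is equivalent to weak proper outerness (Theorem \ref{graph_CK_outer}), which under the absence of non-trivial $\beta_E$-invariant ideals collapses to plain outerness of every $\beta_E^n$ --- the one genuinely delicate point, which you identify and close correctly (note that $C^*(E)^\gamma$ itself is a $\beta_E$-invariant ideal precisely because $\beta_E$ has hereditary image, so weak proper outerness does specialize to outerness of each $\beta_E^n$). What your version buys is self-containedness: Bates' Proposition 5.1 is not used as input but is recovered as a consequence, which is in fact closer to the paper's stated aim of deducing the graph-theoretic simplicity criterion from the dynamical system $(C^*(E)^\gamma,\beta_E)$ rather than assuming it; what the paper's citation-based route buys is brevity, at the cost of leaning on an external result whose content the theorem is meant to reprove.
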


\section{Pure infiniteness}

In Theorem \ref{theor_simple} we have given necessary and sufficient conditions on the endomorphism $\beta$ for the simplicity of the $C^*$-algebra $A\times_\beta\N$. 
If $A$ is a unital $C^*$-algebra and $\beta(1)\neq 1$ we have then that $A\times_\beta\N$ contains a proper isometry, and if in addition  $A\times_\beta\N$ is simple, we have that it is a properly infinite $C^*$-algebra.
We will see that for a broad class of unital real rank zero $C^*$-algebras $A$ we have that  $A\times_\beta\N$ turns out to be purely infinite. Our result generalize and unify similar results given in \cite{Ror} and \cite{JKO}. 

\begin{lem}\label{full_element}
Let $A$ be a unital $C^*$-algebra, let $\beta:A\longrightarrow A$ be an injective endomorphism, and suppose that does not exist  any proper ideal $I$ of $A$ such that $\beta(I)\subseteq I$. Then, given any non-zero $a\in A_+$ there exists $n\in \N$ such that $a+\beta(a)+\cdots+\beta^n(a)$ is a full positive element in $A$.
\end{lem}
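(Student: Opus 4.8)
The plan is to iterate $\beta$ so as to build a \emph{single} non-zero ideal of $A$ that is weakly $\beta$-invariant out of the partial sums, and then invoke the hypothesis. Write $b_n := a + \beta(a) + \cdots + \beta^n(a) \in A_+$ and let $J_n := \overline{A\, b_n\, A}$ be the closed two-sided ideal it generates. Since $b_{n+1} - b_n = \beta^{n+1}(a) \ge 0$ and $b_{n+1} - \beta(b_n) = a \ge 0$, a comparison of positive elements (pass to the quotient $A/J_{n+1}$, where the image of $b_{n+1}$ vanishes, so positivity forces the image of the smaller element to vanish as well) yields at once both $J_n \subseteq J_{n+1}$ and $\beta(b_n) \in J_{n+1}$.

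Next I would set $J := \overline{\bigcup_{n\geq 0} J_n}$, a non-zero closed ideal of $A$ (it contains $a \neq 0$). The crux is to verify $\beta(J) \subseteq J$. For this I first check $\beta(J_n) \subseteq J_{n+1}$: since $\beta$ is a continuous $*$-homomorphism, $\beta(J_n) = \beta\big(\overline{A\, b_n\, A}\big) \subseteq \overline{\beta(A)\,\beta(b_n)\,\beta(A)} \subseteq \overline{A\,\beta(b_n)\,A} \subseteq J_{n+1}$, where the last inclusion holds because $\beta(b_n) \in J_{n+1}$ and $J_{n+1}$ is an ideal. Taking the union over $n$ and using continuity of $\beta$ once more gives $\beta\big(\bigcup_n J_n\big) \subseteq J$ and hence $\beta(J) \subseteq J$.

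By the standing hypothesis there is no proper ideal $I$ of $A$ with $\beta(I) \subseteq I$; since $J \neq 0$ and $\beta(J) \subseteq J$, we conclude $J = A$. Finally I would exploit unitality to descend from the union to a single index: as $1 \in A = J = \overline{\bigcup_n J_n}$, there is some $x \in \bigcup_n J_n$ with $\|1 - x\| < 1$, so $x$ is invertible in $A$; since $x$ lies in some $J_n$ and an ideal containing an invertible element is the whole algebra, $J_n = A$. Thus $b_n = a + \beta(a) + \cdots + \beta^n(a)$ is full, as required.

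I expect the main obstacle to be the correct design of the invariant ideal $J$: the hypothesis only provides the weak invariance $\beta(I) \subseteq I$ rather than the hereditary $\beta$-invariance used earlier, so $J$ must be constructed precisely so that $\beta$ carries $J_n$ into $J_{n+1}$ — which is exactly why one sums $\beta^k(a)$ up to index $n$ and then takes the closure of the increasing union. The comparison estimate $\beta(b_n) \le b_{n+1}$ and the final unitality reduction from ``$J = A$'' to a single finite $n$ are the two points to carry out with care.
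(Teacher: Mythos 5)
Your proof is correct, and it follows essentially the same route as the paper: both arguments build the $\beta$-invariant closed ideal generated by the orbit $\{\beta^n(a)\}_{n\geq 0}$ (your $J=\overline{\bigcup_n J_n}$ is exactly this ideal, presented as a filtration by the ideals of the partial sums $b_n$), invoke the hypothesis to get $J=A$, and use unitality to descend to a single finite index $n$. Your version is in fact slightly more careful than the paper's, which writes $1$ as an exact finite sum $\sum_i x_i\beta^{n_i}(a)y_i$ rather than an approximation and leaves implicit the comparison $\beta^{n_i}(a)\leq b_n$ that you handle upfront.
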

\begin{proof} Consider the ideal $I:=\overline{\text{span }}\{x\beta^n(a)y: n\geq 0,\, x,y\in A\}\neq 0$. It clearly satisfies $\beta(I)\subseteq I$ and then, by hypothesis, we have that $I=A$. Therefore  we can write 
$$1=\sum_{i=1}^kx_i\beta^{n_k}(a)y_k$$
where $x_i,y_i\in A$ and $n_i\in \N$ for every $i\in\{1,\ldots,k\}$. Then, taking $n=\text{max}_i \{n_i\}$, we have or desired result.
\end{proof}

Let $T(A)$ be the set of tracial states of $A$, which is a compact space with the $*$-weak topology. We say that $A$ has \emph{strict comparison} if: (i) $T(A)\neq \emptyset$; (ii) Whenever $p\in \overline{AqA}$ such that $\tau(p)<\tau(q)$ for every $\tau\in T(A)$, we have that $p\lesssim q$.  For example, every unital exact and stably finite $C^*$-algebra of real rank zero that is  $\mathcal{Z}$-stable has strict comparison \cite[Corollary 4.10]{Ror2}.

Recall that a (non-necessarily simple) $C^*$-algebra $A$ is said to be \emph{purely infinite} if and only if all positive elements are properly infinite \cite{KR} ; in particular, every projection of $A$  (if it has any) must be properly infinite. Also recall that a unital simple $C^*$-algebra is purely infinite if and only if has real rank zero and every projection is infinite \cite{Z2}. The following lemma is a slight modification of \cite[Lemma 3.2]{Ror}.

\begin{lem}[{cf. \cite[Lemma 3.2]{Ror}}]\label{comparison} Let $A$ be a unital $C^*$-algebra that either has strict comparison or is purely infinite. Let $\beta:A\longrightarrow A$ be an injective endomorphism such that $\beta(1)\neq 1$ and $\beta(A)$ is a hereditary sub-$C^*$-algebra and let $A\times_\beta \N=C^*(A,V)$. If does not exist any proper ideal $I$ of $A$ such that $\beta(I)\subseteq I$, then for every full  projection  $p\in A$ there exist a partial isometry $u\in A$ and $m\in \N$ such that $(V^*)^mu^*puV^m=(V^*)^mV^m=1$.
\end{lem}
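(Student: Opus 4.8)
The goal is to take a full projection $p \in A$ and, after perturbing it by a partial isometry $u$ and compressing by a power of the isometry $V$, produce a projection of the form $u^*pu$ that is "large enough" to dominate $V^m(V^*)^m$ in the sense that $(V^*)^m u^* p u V^m = 1$. The strategy I would follow mirrors \cite[Lemma 3.2]{Ror}, splitting into the two hypotheses on $A$. First I would use Lemma \ref{full_element}: since no proper ideal $I$ satisfies $\beta(I)\subseteq I$, for the given full projection $p$ (or rather for a suitable positive element built from it) there is $n$ with $q:=p+\beta(p)+\cdots+\beta^n(p)$ full in $A$. The point of passing to this telescoped sum is that $q$ dominates the "tails" $\beta^k(p) = V^k p (V^*)^k$ up to level $n$, which is exactly what lets a compression by $V^m$ see a full projection.

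\textbf{The comparison step.} The heart of the argument is producing a partial isometry $u$ with $u^*pu \geq V^m (V^*)^m$ for suitable $m$, equivalently $V^m(V^*)^m \lesssim p$ witnessed inside the right corner. In the \emph{purely infinite} case this is immediate from the definition of pure infiniteness together with fullness of $p$: a full projection in a purely infinite algebra is properly infinite and absorbs any projection in $\overline{ApA}=A$, so $V^m(V^*)^m \lesssim p$ holds and yields the partial isometry $u$ directly (here $m$ can even be taken to be $1$, or whatever makes the isometry relation close up). In the \emph{strict comparison} case I would compare traces: for $\tau \in T(A)$, the value $\tau(V^m(V^*)^m) = \tau(\beta^m(1))$ decreases as $m$ grows because $\beta(1) < 1$ is a proper subprojection, so $\tau(\beta^m(1)) \to 0$ uniformly over the compact space $T(A)$. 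Meanwhile $\tau(p)$ is bounded below by a positive constant, again uniformly over $T(A)$, precisely because $p$ is full (so no trace can vanish on $p$) and $T(A)$ is compact. Choosing $m$ large enough that $\tau(V^m(V^*)^m) < \tau(p)$ for all $\tau$, strict comparison gives $V^m(V^*)^m \lesssim p$, hence a partial isometry $u$ with $u^*pu \geq V^m(V^*)^m = V^m(V^*)^m$, whence $(V^*)^m u^* p u V^m = (V^*)^m V^m (V^*)^m V^m = (V^*)^m V^m = 1$.

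\textbf{The main obstacle.} The delicate point I expect to fight with is the uniform trace estimates in the strict-comparison case. That $\tau(\beta^m(1)) \to 0$ uniformly requires that no tracial state can be "invariant" under $\beta$ in a way that keeps $\tau(\beta^m(1))$ bounded away from $0$; I would need to rule this out using the no-invariant-ideal hypothesis, perhaps by noting that the set $\{\tau : \lim_m \tau(\beta^m(1)) = 0\}$ relates to the support of traces and that $\beta(1) \neq 1$ forces genuine shrinkage on each extreme trace, then invoking compactness of $T(A)$ to upgrade pointwise decay to uniform decay. Similarly, bounding $\tau(p)$ away from $0$ uniformly uses fullness plus compactness, but one must check that fullness of the projection indeed forces a uniform positive lower bound on its trace (a trace vanishing on a full projection would vanish on the whole unit, impossible for a state). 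Once these two uniform estimates are in hand, the comparison and the closing-up of the partial isometry relation are routine, and the purely infinite case is comparatively painless.
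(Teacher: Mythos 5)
Your skeleton matches the paper's: split into the purely infinite case (where $VV^*\lesssim p$ follows at once from proper infiniteness of the full projection $p$, so $m=1$ works) and the strict comparison case (bound $\inf_\tau\tau(p)>0$ using fullness and compactness of $T(A)$, force $\sup_\tau \tau(V^m(V^*)^m)$ below that bound, apply strict comparison, and close up the partial isometry relation). The purely infinite half and the endgame are correct. But in the strict comparison half there is a genuine gap exactly at the point you flag as the main obstacle, and the mechanism you sketch there does not work. The claim that ``$\beta(1)\neq 1$ forces genuine shrinkage on each extreme trace'' is false: the sequence $\tau(\beta^m(1))$ is non-increasing, but nothing about $\beta(1)\neq 1$ alone prevents it from stabilizing at a positive value. (Uniformity is not the real issue: since the functions $\tau\mapsto\tau(\beta^m(1))$ are continuous and decrease in $m$, pointwise decay to $0$ would upgrade to uniform decay by Dini's theorem. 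The problem is pointwise decay itself.) The no-invariant-ideal hypothesis must enter quantitatively, and you never say how.

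The idea you are missing is to apply Lemma \ref{full_element} not to $p$ --- your opening application of that lemma to $p$ is vacuous, since $p$ is already full, and plays no role later --- but to the nonzero positive element $1-VV^*=1-\beta(1)$. The telescoping identity
$$(1-VV^*)+\beta(1-VV^*)+\cdots+\beta^{n-1}(1-VV^*)=1-V^n(V^*)^n$$
then produces an $n$ for which $1-V^n(V^*)^n$ is a full projection, hence $\gamma_n:=\sup\{\tau(V^n(V^*)^n):\tau\in T(A)\}<1$; fullness comes from the fact that the ideal $I:=\overline{\mathrm{span}}\,\{x(V^l(V^*)^l-V^{l+1}(V^*)^{l+1})y: l\geq 0,\ x,y\in A\}$ satisfies $\beta(I)\subseteq I$ and is therefore all of $A$. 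Finally, R{\o}rdam's trick --- the functional $x\mapsto\tau(V^nx(V^*)^n)$ is positive, tracial, and of norm at most $\gamma_n$ --- gives the geometric estimate $\tau(V^{nl}(V^*)^{nl})\leq\gamma_n^{\,l}$, uniformly in $\tau\in T(A)$, which drops below $\alpha=\inf_\tau\tau(p)$ for large $l$. This is precisely the uniform decay your argument presupposes; without it, your proof does not close.
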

\begin{proof} We claim that there exists $m\in \N$ such that $V^m(V^m)^*\lesssim p$. Observe that if $A$ is purely infinite then $p$ is a properly infinite full projection. So, we have that $VV^*\in \overline{ApA}=A$. Hence, $VV^*\lesssim p$, so that $m=1$ holds. 

Now suppose that $A$ has strict comparison. Then $T(A)$ is non-empty and compact. 
So, given any $k\in \N$ we set
$$\alpha=\text{inf }\{\tau(p):\tau\in T(A)\}\qquad \text{and}\qquad \gamma_k=\text{sup }\{\tau(V^k(V^*)^k):\tau\in T(A)\}\,.$$
Observe that, since $p$ is full, we have that $\alpha> 0$. Now, we claim that there exists $n\in\N$ such that $\gamma_n<1$. Indeed, it is enough to prove that there exists $n\in \N$ such that $1-V^n(V^*)^n$ is a full projection. Let us construct the ideal  
$$I:=\overline{\text{span }}\{x(V^l(V^*)^l-V^{l+1}(V^*)^{l+1})y: l\geq 0,\, x,y\in A\}\neq 0\,.$$
It is clear that $\beta(I)\subseteq I$. Therefore, by Lemma  \ref{full_element}, there exists $n\in \N$ such that 
$$(1-VV^*)+\cdots+\beta^{n-1}(1-VV^*)=(1-VV^*)+\cdots+(V^{n-1}(V^*)^{n-1}-V^{n}(V^*)^{n})=1-V^{n}(V^*)^{n}\,,$$
is a full projection. Therefore $\gamma_n<1$. By the same argument as in the proof of \cite[Lemma 3.2]{Ror}, we have that $\tau(V^{nl}(V^*)^{nl})\leq \gamma_n^l$ for every $l\in\N$. Then, there exists $l\in\N$ such that $\tau(V^{nl}(V^*)^{nl})\leq \gamma_n^l<\alpha\leq \tau(p)$. Since $A$ has strict comparison, we have that $V^{nl}(V^*)^{nl}\lesssim p$. So, there exists a partial isometry $u\in A$ such that $u^*u=V^{nl}(V^*)^{nl}$ and $uu^*\leq p$. Therefore $(V^*)^{nl}u^*puV^{nl}=(V^*)^{nl}(V^{nl}(V^*)^{nl})V^{nl}=1$, so we are done.
\end{proof}

\begin{lem} Let $A$ be a $C^*$-algebra of real rank zero, and let $\beta:A\rightarrow A$ be an extendible injective endomorphism with $\beta(A)$ being hereditary such that $\mathbb{T}(\beta)=\mathbb{T}$. Then, given any $a\in A^\sim$ and any $B$ hereditary sub-$C^*$-algebra of $A$ we have that
$$\text{inf }\{\|pa\beta(p)\|: p \text{ is a non-zero projection of }B\}=0\,.$$
\end{lem}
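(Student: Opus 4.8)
```latex
The plan is to leverage the already-established equivalence between the Connes' spectrum condition $\mathbb{T}(\beta)=\mathbb{T}$ and the infimum condition over hereditary sub-$C^*$-algebras provided by statement (2) of Theorem \ref{theor_simple}, and then to upgrade the supremum over arbitrary positive contractions to one taken over the (abundant) projections of $B$ using the real rank zero hypothesis. More precisely, since $\mathbb{T}(\beta)=\mathbb{T}$ is exactly hypothesis (1) of Theorem \ref{theor_simple}, we may invoke the implication $(1)\Rightarrow(2)$ to conclude that for any $a\in A^{\sim}$ and any hereditary sub-$C^*$-algebra $B$ of $A$,
\begin{equation*}
\inf\{\|xa\beta(x)\|: 0\leq x\in B,\ \|x\|=1\}=0\,.
\end{equation*}
The goal is then to show that the same infimum, restricted to projections $p\in B$, still equals zero.

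First I would fix $\varepsilon>0$ and, using the infimum above, choose a positive contraction $x\in B$ with $\|xa\beta(x)\|<\varepsilon/3$. The idea is to approximate $x$ in norm by a positive element built from a projection lying in the hereditary subalgebra $\overline{xBx}$. Since $A$ has real rank zero, the hereditary sub-$C^*$-algebra $\overline{xAx}$ also has real rank zero, and in particular has an approximate unit consisting of projections; equivalently, there is a projection $p\in \overline{xAx}\subseteq B$ such that $\|xp-x\|=\|px-x\|$ is as small as we like, say $\|x-pxp\|<\delta$ for a $\delta$ to be chosen. Because the spectral structure of $x$ near its top of the spectrum can be captured by a projection $p$ with $px=xp\approx x$, I would arrange (via functional calculus on $x$, choosing a spectral projection approximated by a genuine projection of $B$ using real rank zero) that $\|px-x\|<\delta$ and $p\leq $ the support of $x$ so that $p$ lies in $B$.

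The key estimate will then be a triangle-inequality replacement of $x$ by $p$ in the product $xa\beta(x)$. Writing $pa\beta(p)=(p-x)a\beta(p)+xa\beta(p-x)+xa\beta(x)$ and using that $\beta$ is a contraction together with $\|a\|\leq\|a\|$ fixed, each of the first two terms is bounded by $\|a\|\,\delta$ (up to the factor $\|\beta\|\leq 1$), so
\begin{equation*}
\|pa\beta(p)\|\leq 2\|a\|\,\delta+\|xa\beta(x)\|<2\|a\|\,\delta+\varepsilon/3\,.
\end{equation*}
Choosing $\delta$ small enough that $2\|a\|\,\delta<\varepsilon/3$ and recalling $\varepsilon/3<\varepsilon$ gives $\|pa\beta(p)\|<\varepsilon$, with $p$ a non-zero projection of $B$ (non-zero since $\|x\|=1$ forces the approximating projection to be non-trivial once $\delta<1$).

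The step I expect to be the main obstacle is ensuring that the projection $p$ extracted from $x$ via real rank zero actually lies inside $B$ and behaves well under $\beta$; in particular one must be careful that $\overline{xAx}\subseteq B$ (which holds since $x\in B$ and $B$ is hereditary) and that $p$ can be taken close to $x$ in norm rather than merely strongly. This is where the real rank zero hypothesis is essential: it guarantees that $x$ can be norm-approximated by positive elements with finite spectrum, equivalently by finite linear combinations of projections of $\overline{xAx}$, from which a single dominating projection $p$ close to $x$ is produced. Once this norm approximation is secured, the rest is the routine perturbation estimate sketched above, and the contractivity of $\beta$ (an injective, hence isometric, endomorphism) keeps all error terms controlled.
```
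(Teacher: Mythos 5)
Your first step (invoking the implication $(1)\Rightarrow(2)$ of Theorem \ref{theor_simple} to produce $x\in B_+$, $\|x\|=1$, with $\|xa\beta(x)\|$ small) is exactly how the paper begins. The gap comes in the projection-extraction step: you conflate the estimate $\|px-x\|<\delta$, which real rank zero does provide (via an approximate unit of projections in $\overline{xAx}$), with the estimate $\|p-x\|<\delta$, which is what your decomposition
$pa\beta(p)=(p-x)a\beta(p)+xa\beta(p-x)+xa\beta(x)$
actually requires, since the first two terms are bounded by $\|p-x\|\,\|a\|$, not by $\|px-x\|\,\|a\|$. These are genuinely different: a positive element of norm one need not be norm-close to \emph{any} projection even in a real rank zero algebra (take $x=\mathrm{diag}(1,1/2)$ in $M_2(\C)$; every projection is at distance at least $1/2$ from $x$). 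Your fallback claim in the last paragraph --- that a finite-spectrum approximant $\sum_i\lambda_ip_i$ of $x$ yields a ``single dominating projection $p$ close to $x$'' --- fails for the same example: the dominating projection of $\mathrm{diag}(1,1/2)$ is the identity, again at distance $1/2$. Moreover, the defect cannot be repaired by keeping only $\|px-x\|$ small: a projection $p$ with $px\approx x$ dominates the whole support of $x$, including the part of the spectrum where $x$ is tiny, and there $pa\beta(p)$ can pick up contributions that $xa\beta(x)$ suppresses, so no bound of the form $\|pa\beta(p)\|\leq C\|xa\beta(x)\|+\text{small}$ is available.

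The paper's proof goes in the opposite direction: instead of a projection \emph{dominating} (or norm-approximating) $x$, it produces a projection \emph{dominated by} the part of $x$ near the top of its spectrum. Concretely, one replaces $x$ by $f_\delta(x)$, where $f_\delta$ is $1$ on $[1-\delta/2,1]$ and satisfies $|f_\delta(t)-t|<\delta$, so that $\|f_\delta(x)a\beta(f_\delta(x))\|<\varepsilon$ (this perturbation estimate is legitimate because $f_\delta(x)$ and $x$ \emph{are} norm-close, both being functional calculus of $x$). Then one considers the hereditary sub-$C^*$-algebra $C=\{y\in B: f_\delta(x)y=yf_\delta(x)=y\}$, which is non-zero because $\|x\|=1$, and uses real rank zero to find a non-zero projection $p\in C$. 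The point is the algebraic identity $p=pf_\delta(x)=f_\delta(x)p$, which gives
$$\|pa\beta(p)\|=\|pf_\delta(x)a\beta(f_\delta(x))\beta(p)\|\leq\|f_\delta(x)a\beta(f_\delta(x))\|<\varepsilon\,,$$
with no term of the form $\|p-x\|\,\|a\|$ appearing at all. If you replace your norm-approximation step by this ``flatten the top of the spectrum, then pick a projection under the flat part'' device, the rest of your argument (and its use of the isometric property of $\beta$) goes through.
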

\begin{proof}
Let $a\in A^+$ and let $B$ be a hereditary sub-$C^*$-algebra of $A$. Given $\varepsilon>0$, by Theorem \ref{theor_simple} there exists $x\in B_+$ with $\|x\|=1$ such that $\|xa\beta(x)\|<\varepsilon/2$. Given $\delta>0$,  let $f_\delta:[0,1]\longrightarrow [0,1]$ be such that $f(t)=1$ for every $t\in [1-\delta/2,1]$ and such that $|f_\delta(t)-t|< \delta$ for every $0\leq t\leq 1$. Take $\delta>0$ such that  $\|f_\delta(x)a\beta(f_\delta(x))\|< \varepsilon$. Let $C=\{y\in B: f_\delta(x)y=yf_\delta(x)=y\}\neq 0$. Notice that $C$ is a hereditary sub-$C^*$-algebra of $B$. Since $C$ has real rank zero, there exists a non-zero projection $p\in C$, and by construction $pf_\delta(x)=f_\delta(x)p=p$. Therefore
$$\|pa\beta(p)\|=\|pf_\delta(x)a\beta(f_\delta(x)p)\|\leq \|f_\delta(x)a\beta(f_\delta(x))\|< \varepsilon\,.$$
\end{proof}

\begin{corol}\label{proj_aprox} Let $A$ be a $C^*$-algebra of real rank zero, and let $\beta:A\longrightarrow A$ be an extendible injective endomorphism with $\beta(A)$ being hereditary such that $\mathbb{T}(\beta^n)=\mathbb{T}$ for every $n>0$. Then, given any $\varepsilon>0$, $a_1,\ldots,a_k\in A^\sim$ and $n_1,\ldots,n_k\in \N$ and a projection  $p\in A$, there exists a projection $q\in pAp$ such that
$$\|qa_i\beta^{n_i}(q)\|<\varepsilon\qquad \text{ for every }i\in\{1,\ldots,k\}\,.$$
\end{corol}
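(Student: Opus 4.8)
The plan is to prove the statement by a finite induction on $k$, repeatedly invoking the Lemma immediately preceding this Corollary inside a decreasing chain of corners. The starting observation is that for each index $i$ the endomorphism $\beta^{n_i}$ again satisfies every hypothesis of that Lemma. It is injective and extendible (a composite of injective, resp.\ extendible, maps), its Connes spectrum is full since $\mathbb{T}(\beta^{n_i})=\mathbb{T}$ by assumption, and its image is hereditary: the identity $\overline{\beta(A)A\beta(A)}=\beta(A)$ is exactly the hereditarity of $\beta(A)$, so $A$ is itself a $\beta$-invariant ideal, and Lemma \ref{invariant} then yields $\overline{\beta^{n_i}(A)A\beta^{n_i}(A)}=\beta^{n_i}(A)$. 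Hence, applied with the single element $a_i$ and the endomorphism $\beta^{n_i}$, the preceding Lemma produces, inside any prescribed nonzero hereditary subalgebra, a nonzero projection $q$ with $\|q\,a_i\,\beta^{n_i}(q)\|<\varepsilon$.

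Next I would run the induction. Set $q_0:=p$ and $B_0:=pAp$, a nonzero hereditary subalgebra of the real rank zero algebra $A$. For $i=1,\dots,k$, having chosen a nonzero projection $q_{i-1}$, apply the Lemma to $a_i$, $\beta^{n_i}$ and the hereditary subalgebra $B_{i-1}:=q_{i-1}Aq_{i-1}$ to obtain a nonzero projection $q_i\in B_{i-1}$ with $\|q_i\,a_i\,\beta^{n_i}(q_i)\|<\varepsilon$. Since $q_i$ is a projection in the corner $q_{i-1}Aq_{i-1}$ we have $q_i\leq q_{i-1}$, so the chain $q_k\leq\cdots\leq q_1\leq p$ is decreasing, and in particular $q:=q_k$ lies in $pAp$, as required.

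The one point that genuinely needs checking — the only content beyond bookkeeping — is that passing to the smaller projection $q\leq q_i$ does not spoil the estimate secured at stage $i$. This is where multiplicativity of $\beta^{n_i}$ enters: from $q\leq q_i$ we get $qq_i=q_iq=q$, whence
$$q\,a_i\,\beta^{n_i}(q)=q\big(q_i\,a_i\,\beta^{n_i}(q_i)\big)\beta^{n_i}(q),$$
and therefore $\|q\,a_i\,\beta^{n_i}(q)\|\leq\|q_i\,a_i\,\beta^{n_i}(q_i)\|<\varepsilon$ for every $i\in\{1,\dots,k\}$, since $q$ and $\beta^{n_i}(q)$ are projections of norm at most $1$. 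Thus $q=q_k$ is the desired projection. I do not anticipate a serious obstacle here: the mechanism is just a nested application of the preceding Lemma, and the crux is the simple observation that the bound $\|q_i\,a_i\,\beta^{n_i}(q_i)\|<\varepsilon$ is inherited by every subprojection of $q_i$, which is what lets the finitely many constraints be met simultaneously by the single projection at the bottom of the chain.
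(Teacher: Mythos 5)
Your proof is correct and is essentially the argument the paper intends (the Corollary is stated without proof there): apply the preceding Lemma to each $\beta^{n_i}$ inside a nested chain of corners $q_{i}Aq_{i}$, noting that the estimate $\|q_ia_i\beta^{n_i}(q_i)\|<\varepsilon$ passes to subprojections. Your verification that $\beta^{n_i}$ inherits the Lemma's hypotheses — in particular that $\beta^{n_i}(A)$ is hereditary via Lemma \ref{invariant} — is precisely the justification the paper leaves implicit (cf.\ Remark \ref{corner}).
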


A $C^*$-algebra $A$ is said to be \emph{weakly divisible} if given any projection $p\in A$, there exists a unital $*$-homomorphism $M_2\oplus M_3\longrightarrow pAp$ \cite[Lemma 5.2]{RP}. Conditions for a non-type $I$ real rank zero $C^*$-algebra being weakly divisible  are given in \cite[Theorem 5.8]{RP}. In particular, every simple non-type $I$ $C^*$-algebra of real rank zero is weakly divisible.  Observe that, if $A$ is weakly divisible or purely infinite, then the following statement holds:
\begin{align*} 
(\dagger)  \qquad &  \text{Given any } n\in\N \text{ and }p\in A \text{ there exists }  p_1,\ldots,p_n\in A \text{ non-zero pairwise orthogonal} \\
& \text{subprojections of }p \text{ with }p\in \overline{Ap_iA}\text{ for all } i 
\end{align*}  

\begin{prop}\label{full_proj_filling} Let $A$ be a unital $C^*$-algebra of real rank zero satisfying $(\dagger)$, let $\beta:A\longrightarrow A$ be an injective endomorphism such that $\beta(A)$ is a hereditary sub-$C^*$-algebra of $A$, and let  $A\times_{\beta}\N=C^*(A,V)$. If does not exist  any proper ideal $I$ of $A$ such that $\beta(I)\subseteq I$, then given any non-zero projection $p\in A$ there exist a full projection $q\in A$ and $c\in A\times_\beta\N$ such that $q= cpc^*$. 
\end{prop}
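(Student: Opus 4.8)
The plan is to reduce the statement to producing a \emph{full projection} $q\in A$ that is Cuntz‑subequivalent to $p$ inside $A\times_\beta\N$. Indeed, if $q=cpc^*$ is a projection, then $w:=cp$ satisfies $ww^*=cp\,p\,c^*=cpc^*=q$, so $w$ is a partial isometry and $w^*w=pc^*cp$ is a projection $\le p$; conversely, if $q\sim e$ for some subprojection $e\le p$, with implementing partial isometry $v$ ($v^*v=q$, $vv^*=e$), then $c':=v^*$ gives $c'pc'^*=v^*pv=v^*v=q$ (using $pv=v$, which follows from $vv^*=e\le p$). Hence it is enough to exhibit a full projection $q\in A$ with $q\lesssim p$.

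To build such a $q$ I would first apply Lemma \ref{full_element} to $a=p$, obtaining $n\in\N$ with $p+\beta(p)+\cdots+\beta^n(p)$ full in $A$. Next I would invoke hypothesis $(\dagger)$ to pick pairwise orthogonal nonzero subprojections $p_0,\ldots,p_n\le p$ with $p\in\overline{Ap_kA}$ for every $k$, and set $c:=\sum_{k=0}^n V^kp_k\in A\times_\beta\N$. Because $p_kp=p_k$ and $p_kp_j=\delta_{kj}p_k$, every cross term cancels and $h:=cpc^*=\sum_{k=0}^n V^kp_k(V^*)^k=\sum_{k=0}^n\beta^k(p_k)$ lands back in $A$. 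Two properties of $h$ are then expected: first, $h$ is \emph{full}, since $p\in\overline{Ap_kA}$ forces $\beta^k(p)\in\overline{A\beta^k(p_k)A}$, so the ideal generated by $h$ contains the ideal generated by $\sum_{k=0}^n\beta^k(p)$, which is all of $A$; second, $h\lesssim p$, because $h=(cp)(cp)^*\sim(cp)^*(cp)=pc^*cp$, and this last element lies in the hereditary subalgebra $p(A\times_\beta\N)p$, hence is Cuntz‑below $p$.

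Finally I would pass from the full positive element $h$ to a full projection using that $A$ has real rank zero: a full positive element of a real rank zero $C^*$-algebra dominates a full projection, so I can choose a projection $q\in\overline{hAh}$ that is full and satisfies $q\lesssim h\lesssim p$. As $q$ and $p$ are projections, $q\lesssim p$ yields a subprojection $e\le p$ with $q\sim e$, and the reduction in the first paragraph then produces the desired $c'$ with $q=c'pc'^*$, where $q$ is full, completing the argument.

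The main obstacle is the middle step: arranging that the single element $cpc^*$ is \emph{simultaneously} an element of $A$, full, and Cuntz‑below $p$. The cross‑term cancellation (hence membership in $A$) is exactly what the orthogonality furnished by $(\dagger)$ buys, while fullness relies on keeping each block $p_k$ full in the ideal generated by $p$ together with Lemma \ref{full_element}. The closing conversion of a full positive element into a full projection, and the Cuntz‑comparison bookkeeping $q\lesssim h\lesssim p$, are routine for real rank zero algebras but must be carried out with care; note in particular that no form of comparison of projections is needed here, so the argument stays within the hypotheses of the proposition.
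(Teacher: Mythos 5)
Your proof is correct, and its first half coincides with the paper's: both invoke Lemma \ref{full_element} at $a=p$, then use $(\dagger)$ to get orthogonal full subprojections $p_0,\ldots,p_n\le p$, and both observe that $h=q':=\sum_{k}\beta^k(p_k)$ is a full positive element of $A$. Where you genuinely diverge is in converting this full positive element into a full projection of the form $cpc^*$. The paper stays inside $A$: it runs an induction using Zhang's Riesz decomposition of $V(A)$ to split each $\beta^k(p_k)$ into a piece subequivalent to the previously built projection $q_{k-1}$ and a piece moved (by a partial isometry $v_k\in A$) orthogonally into $1-q_{k-1}$; the resulting $q=p_0+\sum_k v_kv_k^*$ is an honest full projection and the modified element $c=p_0+\sum_k v_kV^kp_k$ satisfies $cpc^*=q$ \emph{exactly}, by orthogonality. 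You instead keep the naive $c=\sum_k V^kp_k$, accept that $h=cpc^*$ is only a full positive element, and post-process: extract a full projection $q\in\overline{hAh}$ via the approximate-unit-of-projections property of hereditary subalgebras in real rank zero (note this step needs unitality of $A$ --- in $c_0$, say, a full positive element dominates no full projection --- but unitality is a hypothesis here, and fullness survives cutting by a large enough projection since $1=\sum_i x_ihy_i$ is a finite sum), then use $q\lesssim h\lesssim p$ and the standard fact that Cuntz subequivalence between projections yields Murray--von Neumann subequivalence to a subprojection, to get the conjugating partial isometry in $A\times_\beta\N$. Both routes lean on real rank zero through a standard but different result: the paper through Zhang's Riesz decomposition, yours through the HP property plus projection comparison; your closing claim that ``no form of comparison of projections is needed'' is therefore slightly overstated --- what you avoid is Riesz decomposition, not comparison --- but this does not affect correctness. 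Your version buys a simpler $c$ and no induction; the paper's buys an explicit exact identity $cpc^*=q$ with all auxiliary partial isometries living in $A$ rather than in the crossed product.
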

\begin{proof}
By Lemma \ref{full_element} there exists $n\in \N$ such that $p+\beta(p)+\cdots+\beta^n(p)$ is a full positive element of $A$. Since $A$ satisfies $(\dagger)$ there exist non-zero orthogonal projections $p_0,\ldots,p_n\in A$ such that $p_0+\cdots+p_n\leq p$ with $p\in \overline{Ap_iA}$ for all  $i\in\{0,\ldots,n\}$.  Observe that $p+\beta(p)+\cdots+\beta^n(p)$ lies in the ideal generated by $q':=p_0+\beta(p_1)+\cdots+\beta^n(p_n)$, so $q'$ is also a full positive element of $A$.  Denote  $p_i':=\beta^{i}(p_i)$ for every $i\in\{0,\ldots,n\}$.
Now we are going to use induction on $n$ to construct a projection $q\in A$ such that $p_0'+\cdots+p_n'\in \overline{AqA}$. 
The case $n=0$ is clear. Now, suppose that there exists a projection $q_{k-1}$ such that $p_0'+\cdots+p_{k-1}'\in \overline{Aq_{k-1}A}$.

Using the Riesz decomposition of $V(A)$ \cite{Z1} we have $p'_k\sim a_k\oplus b_k$ such that $a_k\lesssim q_{k-1}$ and $b_k\lesssim 1-q_{k-1}$. Let $v_k$ be the partial isometry such that $v_k^*v_k\leq p'_{k}$ and $v_kv_k^*\leq 1-q_{k-1}$. If we define the projection $q_k:=q_{k-1}+v_kv_k^*$, then we have that $p'_1+\cdots+p_k'\in \overline{Aq_kA}$. Therefore the projection $q:=q_n$ is full. If we define $c:=p_0+v_1V^{1}p_1+\cdots+v_nV^{n}p_n$, then we have that $$cpc^*=cc^*=p_0+v_1\beta(p_1)v_1^*+\cdots + v_n\beta^{n}(p_n)v^*_n =q\,,$$ as desired.
\end{proof}

\begin{theor}\label{theor_purely_inf} 
Let $A$ be a unital $C^*$-algebra of real rank zero satisfying $(\dagger)$ that has  strict comparison, let $\beta:A\longrightarrow A$ be an injective endomorphism such that $\beta(1)\neq 1$ and $\beta(A)$ is a hereditary sub-$C^*$-algebra of $A$. If $A\times_{\beta}\N$ is simple and $\beta(1)$ is a full projection of $A$, then $A\times_{\beta}\N$ is purely infinite simple $C^*$-algebra. 
\end{theor}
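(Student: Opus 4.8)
The plan is to prove the stronger statement that $1\lesssim a$ (Cuntz subequivalence) for every nonzero $a\in(A\times_\beta\N)_+$; since $A\times_\beta\N$ is unital and simple, this is equivalent to pure infiniteness. I would organise the argument in three parts: a bridging lemma promoting the hypothesis available from simplicity to the one the earlier lemmas actually require; the observation that every nonzero projection of $A$ is infinite inside $A\times_\beta\N$; and a reduction of an arbitrary positive element to a projection of $A$.

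The first part is where I expect the main obstacle. Lemmas \ref{full_element} and \ref{comparison} and Proposition \ref{full_proj_filling} all require that there be no proper nonzero ideal $I\lhd A$ with $\beta(I)\subseteq I$, whereas simplicity of $A\times_\beta\N$ only yields, via Corollary \ref{theor_simple_corol}, that $A$ is $\beta$-simple. This is genuinely weaker: every $\beta$-invariant ideal satisfies $\beta(I)\subseteq I$ but not conversely (cf.\ Examples \ref{exem1}(5)). To bridge the gap, given such an $I$ I would set $\beta^{-n}(I):=\{a\in A:\beta^n(a)\in I\}$ and $N:=\overline{\bigcup_{n\geq 0}\beta^{-n}(I)}$. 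Since $\beta(I)\subseteq I$ the ideals $\beta^{-n}(I)$ increase, so $N$ is an ideal with $\beta^{-1}(N)=N$. Using that $\beta(A)$ is hereditary one checks $N$ is $\beta$-invariant: any $\beta(a)n\beta(a')\in\beta(A)N\beta(A)\subseteq\beta(A)$ equals $\beta(c)$, and $\beta(c)\in N$ forces $c\in\beta^{-1}(N)=N$, whence $\overline{\beta(A)N\beta(A)}=\beta(N)$. As $N\supseteq I\neq 0$ and $A$ is $\beta$-simple, $N=A$; then $1\in N$ puts an invertible element in some $\beta^{-n}(I)$, so $\beta^{-n}(I)=A$ and $\beta^n(1)\in I$. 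Finally $\beta^n(1)$ is full because $\beta(1)$ is: applying $\beta$ to $\overline{A\beta(1)A}=A$ shows $\beta(A)\subseteq\overline{\beta(A)\beta^2(1)\beta(A)}$, so $\overline{A\beta^2(1)A}\supseteq\overline{A\beta(1)A}=A$, and one iterates. Hence $I\supseteq\overline{A\beta^n(1)A}=A$, contradicting properness.

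With the ideal condition secured, the second part is short. For any nonzero projection $p\in A$, Proposition \ref{full_proj_filling} yields a full projection $q$ and $c\in A\times_\beta\N$ with $q=cpc^*$, and Lemma \ref{comparison} (applicable since $A$ has strict comparison and $\beta(1)\neq 1$ is full) yields a partial isometry $u\in A$ and $m\in\N$ with $(V^*)^m u^*quV^m=1$. Setting $z:=c^*uV^m$ gives $z^*pz=(V^*)^m u^*c\,p\,c^*uV^m=1$, so $1\lesssim p$. Moreover, because $A\times_\beta\N$ is simple it is $\beta$-simple with every $\beta^n$ outer, so Theorem \ref{theor_simple} gives $\mathbb{T}(\beta^n)=\mathbb{T}$ for all $n>0$ and Corollary \ref{proj_aprox} is at our disposal.

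For the third part, fix nonzero $x\in(A\times_\beta\N)_+$ with $\|x\|=1$ and approximate it within $\varepsilon$ by a finite sum $y=\sum_{(n,m)\in F}a_{n,m}V^n(V^*)^m$, whose diagonal part is $E(y)=\sum_{(n,n)\in F}a_{n,n}\beta^n(1)\in A$. Using $Va=\beta(a)V$ one computes, for $n>m$, that $q\,a_{n,m}V^n(V^*)^m\,q=q\,a_{n,m}\beta^n(1)\,\beta^{n-m}(q)\,V^{n-m}$, so its norm is at most $\|q\,b_{n,m}\,\beta^{n-m}(q)\|$ with $b_{n,m}:=a_{n,m}\beta^n(1)\in A$, and symmetrically for $n<m$ after taking adjoints. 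Choosing $0<\delta<\|E(x)\|$, real rank zero of $A$ provides a nonzero projection $e$ in $\overline{(E(x)-\delta)_+A(E(x)-\delta)_+}$, for which $e\,E(x)\,e\geq\delta e$; applying Corollary \ref{proj_aprox} with $p=e$ to the finite family $\{(b_{n,m},|n-m|):(n,m)\in F,\ n\neq m\}$ produces a projection $q\leq e$ making every off-diagonal compression smaller than $\varepsilon$. Then $q x q$ differs from $qE(x)q\geq\delta q$ by $O(\varepsilon)$, so for $\varepsilon$ small enough $qxq\geq(\delta/2)q$, hence $qxq$ is invertible in the corner $q(A\times_\beta\N)q$ and $q\lesssim qxq\lesssim x$. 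Combining the parts, $1\lesssim q\lesssim x$; as $x$ was an arbitrary nonzero positive element of the unital simple algebra $A\times_\beta\N$, it is purely infinite simple.
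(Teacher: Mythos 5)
Your proof is correct, and its second and third parts follow essentially the same route as the paper's: reduce an arbitrary positive element, via the faithful expectation $E$, real rank zero and Corollary \ref{proj_aprox}, to an invertible compression $qxq$ in a corner by a projection $q\in A$, then promote $q$ to a full projection by Proposition \ref{full_proj_filling} and compare with $1$ by Lemma \ref{comparison}. The cosmetic differences --- proving $1\lesssim x$ rather than producing $a,b$ with $axb=1$, and compressing by a projection $e$ satisfying $eE(x)e\geq\delta e$ rather than first normalizing so that $E(x)$ is itself a projection --- are immaterial.

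The genuine difference is in your first part. The paper obtains the hypothesis required by Lemma \ref{comparison} and Proposition \ref{full_proj_filling} --- that there is no proper nonzero ideal $I\lhd A$ with $\beta(I)\subseteq I$ --- by citing Schweizer's theorem \cite[Theorem 4.1]{S}, which says precisely that simplicity of $A\times_\beta\N$ together with fullness of $\beta(1)$ is equivalent to outerness of all $\beta^n$ plus that ideal condition. You instead derive the ideal condition from $\beta$-simplicity (Corollary \ref{theor_simple_corol}) and fullness of $\beta(1)$ via the construction $N:=\overline{\bigcup_{n\geq0}\beta^{-n}(I)}$; this makes the argument self-contained, which is a real gain. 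One step there needs more care than your one-line justification: the identity $\beta^{-1}(N)=N$ does \emph{not} follow formally from the fact that the ideals $\beta^{-n}(I)$ increase --- that only gives $\beta^{-1}(M)=M$ for the algebraic union $M:=\bigcup_{n\geq 0}\beta^{-n}(I)$, and preimages do not commute with closures. To pass through the closure, use that $\beta$ is isometric and that $\beta(1)A\beta(1)=\beta(A)$ (this is exactly where hereditarity of $\beta(A)$ and unitality of $A$ enter): if $\beta(c)=\lim_k x_k$ with $x_k\in M$, then $\beta(1)x_k\beta(1)\in M\cap\beta(1)A\beta(1)=M\cap\beta(A)$, so $\beta(1)x_k\beta(1)=\beta(c_k)$ with $\beta^{n_k+1}(c_k)\in I$, i.e.\ $c_k\in M$; since $\beta$ is isometric, $c_k\to c$, whence $c\in N$. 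With this inserted, your bridging lemma --- and hence the whole proof --- is complete.
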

\begin{proof} It is enough to prove that given a positive element $x\in A\times_{\beta}\N$ there exist $a,b\in A\times_{\beta}\N$ such that $axb=1$. Let $E:A\times_{\beta}\N\longrightarrow A$ be the canonical faithful conditional expectation. So, $0\neq E(x)=c\in A_+$. Then, for  $\|c\|>\varepsilon>0$ we have that the hereditary sub-$C^*$-algebra $\overline{(c-\varepsilon)_+A(c-\varepsilon)_+}\subseteq c^{1/2}Ac^{1/2}$ has real rank zero. Hence, there exists a non-zero projection $p=c^{1/2}yc^{1/2}\in c^{1/2}Ac^{1/2}$. Then, $q=y^{1/2}cy^{1/2}$ is a projection, and $E(y^{1/2}xy^{1/2})=y^{1/2}cy^{1/2}=q$. Thus, we can assume that $E(x)=q$ is a non-zero projection. Given $1/2>\varepsilon>0$, there exists $x'=(V^*)^m d_{-m}+\cdots+q+\cdots+d_mV^m$, with $d_j\in A_+$ for every $j$, such that $\|x-x'\|<\varepsilon$.  By Corollary \ref{theor_simple_corol}, Theorem \ref{theor_simple} and Corollary \ref{proj_aprox}, there exists a non-zero projection $p\in qAq$ such that 
$$\|pd_i\beta^i(p) \|<\varepsilon/2m \qquad \text{and}\qquad \|\beta^i(p)d_{-i}p\|<\varepsilon/2m$$
for every $i\in \{1,\ldots,m\}$. Therefore 
$$\|pxp-p\|\leq \|pxp-px'p\|+\|px'p-p\|\leq \varepsilon + \varepsilon<1\,.$$
Then, $pxp$ is invertible in $p(A\times_\beta \N)p$, whence there exists $y\in p(A\times_\beta \N)p$ such that $ypxp=p$.  Since we are assuming that $A\times_\beta \N$ is simple and $\beta(1)$ is a full projection, \cite[Theorem 4.1]{S} implies that there are no non-trivial ideals $I$ of $A$ such that $\beta(I)\subseteq I$. Thus, by Proposition \ref{full_proj_filling}, there exist $c\in A\times_\beta \N$ and a full projection $q\in A$ such that $cpc^*= q$. 

By Lemma \ref{comparison}, there exist $m\in \N$ and a partial isometry $u\in A$ such that $(V^*)^mu^*quV^m=1$ and therefore 
$$(V^*)^mu^* (cypxpc^*) uV^m=(V^*)^mu^* cpc^* uV^m= (V^*)^mu^* q uV^m=1\,.$$
Thus, if we set $a:=(V^*)^mu^* cyp$ and $b:=pc^*uV^m$ we have $axb=1$, as desired.
\end{proof}

When $A$ is a purely infinite $C^*$-algebra, we generalize the result  of \cite{JKO}.

\begin{corol}\label{theor_pi_2} 
Let $A$ be a unital purely infinite $C^*$-algebra of real rank zero, let $\beta:A\longrightarrow A$ be an injective endomorphism such that $\beta(1)\neq 1$ is a full projection and $\beta(A)$ is a hereditary sub-$C^*$-algebra of $A$. Then  $A\times_{\beta}\N$ is a simple purely infinite $C^*$-algebra if and only if  $A\times_{\beta}\N$ is  simple.
\end{corol}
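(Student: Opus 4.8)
The plan is to dispose of the forward implication trivially and to obtain the converse by running the argument of Theorem \ref{theor_purely_inf} essentially verbatim, once I observe that pure infiniteness of $A$ supplies exactly the two ingredients—the property $(\dagger)$ and the comparison step—that were extracted from strict comparison there. If $A\times_\beta\N$ is purely infinite simple then it is in particular simple, so that direction requires nothing. For the converse I assume $A\times_\beta\N$ is simple and must show it is purely infinite.

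First I would record that the extra standing hypotheses of Theorem \ref{theor_purely_inf} are automatic in the present setting. Since $A$ is unital purely infinite of real rank zero, it satisfies $(\dagger)$ by the remark preceding Proposition \ref{full_proj_filling}. Moreover $A$ has real rank zero, $\beta$ is injective with $\beta(1)\neq 1$ a full projection and $\beta(A)$ hereditary, exactly as required. The only hypothesis of Theorem \ref{theor_purely_inf} that genuinely differs is ``strict comparison''; but this was used solely inside Lemma \ref{comparison}, whose statement already covers the alternative ``or is purely infinite''. Thus every tool invoked in the proof of Theorem \ref{theor_purely_inf}—namely Corollary \ref{proj_aprox}, Proposition \ref{full_proj_filling} and Lemma \ref{comparison}—remains available here.

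Concretely, I would reproduce the proof of Theorem \ref{theor_purely_inf}: take $x\in (A\times_\beta\N)_+$, cut down using the canonical conditional expectation to arrange that $E(x)=q$ is a non-zero projection; use Corollary \ref{theor_simple_corol}, Theorem \ref{theor_simple} and Corollary \ref{proj_aprox} (applicable because simplicity forces $\mathbb{T}(\beta^n)=\mathbb{T}$ for all $n>0$) to find a subprojection $p$ making $pxp$ invertible in $p(A\times_\beta\N)p$; then apply Proposition \ref{full_proj_filling} to absorb $p$ into a full projection $q\in A$; and finally invoke Lemma \ref{comparison}—this time through its purely infinite branch, where one simply has $VV^*\lesssim q$ with exponent $m=1$—to produce $a,b$ with $axb=1$. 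The passage from simplicity of $A\times_\beta\N$ to the non-existence of a proper ideal $I\lhd A$ with $\beta(I)\subseteq I$, needed for both Lemma \ref{full_element} and Proposition \ref{full_proj_filling}, again follows from \cite[Theorem 4.1]{S} using that $\beta(1)$ is full.

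I expect no serious obstacle: the entire content of the corollary is the observation that the two appearances of ``strict comparison'' in Theorem \ref{theor_purely_inf} collapse to pure infiniteness, since $(\dagger)$ then comes for free and Lemma \ref{comparison} was stated with the purely infinite case built in. The only point demanding a line of care is checking that the purely infinite branch of Lemma \ref{comparison} indeed yields the required partial isometry $u$ and exponent $m$, which is immediate from $VV^*\in\overline{AqA}=A$ for the full properly infinite projection $q$.
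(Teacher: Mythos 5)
Your proposal is correct and coincides with the paper's own proof: the authors likewise dispose of the forward direction as trivial and observe that the argument of Theorem \ref{theor_purely_inf} runs unchanged because condition $(\dagger)$ holds for purely infinite $C^*$-algebras and Lemma \ref{comparison} was already stated to cover the purely infinite alternative. Your extra remarks (the $m=1$ branch $VV^*\lesssim q$, and obtaining the absence of $\beta$-coinvariant ideals from \cite[Theorem 4.1]{S}) are just an explicit spelling-out of the same steps.
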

\begin{proof}  The proof works in the same way as that of Theorem \ref{theor_purely_inf}, but reminding that Lemma \ref{comparison} and condition $(\dagger)$ are also satisfied for purely infinite $C^*$-algebras.
\end{proof}

Finally, we can use Corollary \ref{theor_pi_2}  to characterize when a crossed product by an automorphism $A\times_\alpha \Z$ is simple and purely infinite.

\begin{corol}[{cf. \cite[Theorem 3.1]{JKO}}]\label{pi_auto} 
Let $A$ be a unital purely infinite $C^*$-algebra of real rank zero, and let $\alpha:A\longrightarrow A$ be an automorphism. Then $A\times_\alpha \Z$ is a simple purely infinite $C^*$-algebra if and only if $A\times_\alpha \Z$ is simple. 
\end{corol}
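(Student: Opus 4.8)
The plan is to reduce the automorphism case to the proper-endomorphism case already settled in Corollary \ref{theor_pi_2}, by conjugating $\alpha$ with a proper isometry drawn from $A$ itself. Since $A$ is unital and purely infinite, its unit is properly infinite, so there is an isometry $s\in A$ with $s^*s=1$ and $p:=ss^*\neq 1$. Define $\beta:=\mathrm{Ad}(s)\circ\alpha$, that is $\beta(a)=s\alpha(a)s^*$. First I would check that $\beta$ satisfies all the hypotheses of Corollary \ref{theor_pi_2}: it is injective, since $s^*\beta(a)s=\alpha(a)$ and $\alpha$ is injective; one has $\beta(1)=ss^*=p\neq 1$; the projection $p$ is full because $s^*ps=1$ lies in $\overline{ApA}$, forcing $\overline{ApA}=A$; and $\beta(A)=sAs^*=pAp$ is the hereditary corner cut out by $p$ (using $s^*As=A$). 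Thus $(A,\beta)$ is a legitimate input for Corollary \ref{theor_pi_2}, with the same underlying algebra $A$.

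The decisive step is to identify the two crossed products. Writing $A\times_\alpha\Z=C^*(A,U)$ with $U$ the unitary implementing $\alpha$, set $W:=sU$. Then $W$ is an isometry, since $W^*W=U^*s^*sU=1$, and $WaW^*=s\alpha(a)s^*=\beta(a)$, so $(\iota,W)$ is a Stacey covariant representation of $(A,\beta)$; moreover $U=s^*W$, whence $C^*(A,W)=C^*(A,U)=A\times_\alpha\Z$. By the universal property there is a surjection $\Phi:A\times_\beta\N\to A\times_\alpha\Z$ fixing $A$ and sending the universal isometry to $W$. Since $\Phi(zW)=z\,sU$ and the gauge action of $A\times_\alpha\Z$ sends $sU\mapsto z\,sU$, the map $\Phi$ intertwines the two gauge actions. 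I would then prove injectivity using the canonical faithful conditional expectations $E_\beta$ and $E_\alpha$ of Section $1$: gauge-equivariance gives $\Phi\circ E_\beta=E_\alpha\circ\Phi$, so for $x\in\ker\Phi$ we obtain $\Phi(E_\beta(x^*x))=E_\alpha(0)=0$; as $E_\beta(x^*x)\in A$ and $\Phi|_A=\mathrm{id}$, faithfulness of $E_\beta$ forces $x^*x=0$. Hence $\Phi$ is an isomorphism and $A\times_\beta\N\cong A\times_\alpha\Z$.

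With this identification in hand the corollary is immediate: if $A\times_\alpha\Z$ is simple then so is $A\times_\beta\N$, and Corollary \ref{theor_pi_2} upgrades this to pure infiniteness of $A\times_\beta\N$, hence of $A\times_\alpha\Z$; the reverse implication is trivial. I expect the only genuinely delicate point to be the isomorphism $A\times_\beta\N\cong A\times_\alpha\Z$ — specifically the injectivity of $\Phi$ — since the verification of the hypotheses of Corollary \ref{theor_pi_2} for $\beta=\mathrm{Ad}(s)\circ\alpha$ is routine; the conditional-expectation argument above (equivalently, a gauge-invariant uniqueness theorem) is what makes it go through.
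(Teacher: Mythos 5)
Your proof is correct, and it takes a genuinely different route from the paper's. The paper perturbs $\alpha$ by a \emph{unitary}: using proper infiniteness of $1$ and \cite[Proposition 2.5]{BRR} it constructs projections $p,e$ and partial isometries $s,t\in A$ with $s^*s=\alpha(p)$, $ss^*=e<p$, $t^*t=1-\alpha(p)$, $tt^*=1-e$, then invokes the proof of \cite[Theorem 3.1]{JKO} to replace $(A,\alpha)$ by the exterior equivalent system $(A,\rho)$, $\rho=\mathrm{Ad}(s+t)\circ\alpha$, so that $A\times_\alpha\Z\cong A\times_\rho\Z$ and $\rho(p)=e<p$; it then identifies the full corner $p(A\times_\rho\Z)p$ with the Stacey crossed product $pAp\times_\rho\N$, applies Corollary \ref{theor_pi_2} to the corner system $(pAp,\rho|_{pAp})$, and finally uses that pure infiniteness passes from a full hereditary corner to the simple ambient algebra. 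You instead perturb $\alpha$ by an \emph{isometry}, obtaining a proper endomorphism $\beta=\mathrm{Ad}(s)\circ\alpha$ of all of $A$, so that Corollary \ref{theor_pi_2} applies to $(A,\beta)$ itself; the whole burden then shifts to the identification $A\times_\beta\N\cong A\times_\alpha\Z$, which you establish correctly: the hypotheses of Corollary \ref{theor_pi_2} hold ($\beta$ injective, $\beta(1)=ss^*$ a full proper projection, $\beta(A)=pAp$ hereditary), and the injectivity of $\Phi$ via gauge equivariance and the faithful conditional expectation is sound --- note that $E_\beta$ indeed lands in $A$, since $V^n(V^n)^*=\beta^n(1)\in A$ for unital $A$, so your appeal to injectivity of $\Phi|_A$ is legitimate. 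What your route buys: it avoids \cite{BRR}, the exterior-equivalence step borrowed from the proof of \cite[Theorem 3.1]{JKO}, the corner compression, and the transfer of pure infiniteness along full hereditary subalgebras, staying entirely inside the universal-property and gauge-action machinery of Section 1. What the paper's route buys: it needs no uniqueness-type theorem for the Stacey crossed product, piggybacking instead on the known structure of crossed products by automorphisms, which is why the authors can describe their argument as essentially verbatim from \cite{JKO}.
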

\begin{proof} The proof is a verbatim of the proof of \cite[Theorem 3.1]{JKO}. We only have to prove that there exist projections $p,e\in A$ and partial isometries $t,s\in A$ such that 
$$s^*s=\alpha(p)\,,\qquad ss^*=e<p\,,\qquad t^*t=1-\alpha(p)\qquad \text{ and }\qquad tt^*=1-e\,.$$
Indeed, since $1$ is a properly infinite projection, there exist mutually orthogonal projections $p_1,p_2,p_3\in A$, all them Murray-von Neumann equivalent to $1$. Observe that $\alpha(p_i)$ are mutually orthogonal  full properly infinite projection of $A$. Then, we have that $\alpha(p_1)\sim e <\alpha(p_2)$ for some projection $e\in A$. Since  $\alpha(p_3)\perp \alpha(p_1)$ and $e\perp \alpha(p_1)$,  by \cite[Proposition 2.5]{BRR} we have that $\alpha(p_1)$ and $e$ are homotopic equivalent, and hence $1-\alpha(p_1)$ and $1-e$ are homotopic equivalent, thus Murray-von Neumann too. Thus, setting $p:=\alpha(p_1)$ we have proved the claim.  

By the proof of \cite[Theorem 3.1]{JKO}, the dynamical system $(A,\alpha)$ is exterior equivalent to  $(A,\rho)$, where $\rho$ is the automorphism defined by $\rho(x)=(s+t)\alpha(x)(s+t)^*$ for every $x\in A$. So, it is enough to prove that $A\times_\rho\Z$ is simple and purely infinite. Notice that $\mathbb{T}(\alpha)=\mathbb{T}(\rho)$, and that $A$ is $\rho$-simple since it is $\alpha$-simple. Hence, $A\times_\rho \Z$ is a simple $C^*$-algebra. Then $p(A\times_\rho\Z)p\cong pAp\times_\rho\N$  is a full simple hereditary sub-$C^*$-algebra. Now, we have that  $pAp$ is a purely infinite $C^*$-algebra of real rank zero, and by construction $\rho(p)=s\alpha(p)s^*$ is a full projection of $pAp$. Thus, by  Theorem \ref{theor_pi_2} we have that $pAp\times_\rho\N$ is a purely infinite $C^*$-algebra, whence so is $A\times_\alpha \Z$. 
\end{proof}

\begin{exem} This is a generalization of Example \ref{exem1}(3) and Cuntz's construction of the algebras $\mathcal{O}_n$ \cite{Cu}. Let $\mathcal{U}_m$ be the $m$-infinity UHF algebra $\bigotimes_{n=1}^\infty M_m$, and let $B=\mathcal{U}_m\oplus\cdots\oplus \mathcal{U}_m$ be the direct sum of $n$ copies of $\mathcal{U}_m$, that is a nuclear unital weakly divisible $C^*$-algebra of real rank zero that absorbs $\mathcal{Z}$ and hence has strict comparison. Let us consider the endomorphism $\beta:B\longrightarrow B$ given by $\beta(x_1,\dots,x_n)=(P_1\otimes x_2,P_2\otimes x_3\cdots,P_n\otimes x_1)$ for every $(x_1,\dots,x_n)\in B$, where $P_1,\cdots,P_n\in M_m$ are  rank $1$ projections. Hence, $\beta$ is injective.  Observe that $\beta(1)\neq 1$ is a full projection of $B$.  It is clear that $B$ is $\beta$-simple and $\beta^k$ is outer for any $k>0$, since $B$ is a unital finite $C^*$-algebra. Hence, $B\times_\beta \N$ is simple by Theorem \ref{theor_simple}, and thus applying Theorem \ref{theor_pi} it is also a purely infinite $C^*$-algebra, in particular it is a Kirchberg algebra. Now, we use the modification of the Pimsner-Voiculescu six-term exact sequence given in \cite{Ror},
$$\xymatrix{K_0(B) \ar[r]^{1-\beta^*} & K_0(B)\ar[r]& K_0(B\times_\beta \N)\ar[d] \\ K_1(B\times_\beta \N) \ar[u] & K_1(B)\ar[l]& K_1(B)\ar[l]_{1-\beta^* } }$$
Notice that the induced map  $\beta^*:\Z[1/m]^n\longrightarrow \Z[1/m]^n$ is given by $$\beta^*(x_1,\ldots,x_n)=(x_2/m,\ldots,x_{n}/m,x_1/m)\,,$$ 
for every $(x_1,\ldots, x_n)\in \Z[1/m]^n$. Then, we can easily compute $ K_0(B\times_\beta \N)=\Z/(m^n-1)\Z$ and $ K_1(B\times_\beta \N)=0$. Thus, using the Kirchberg-Phillips classification theorems, we conclude that $B\times_\beta \N$ is stably isomorphic to the Cuntz algebra $\mathcal{O}_{m^n}$.

\end{exem}

\section*{Acknowledgments}

Part of this work was done during visits of the first author to the Department of Mathematics and Statistics of the University of Otago (New Zealand), and of the second author to the Institutt for Matematiske Fag, Norges Teknisk-Naturvitenskapelige Universitet (Norway). Both authors thank the host centers for their kind hospitality.

\end{document}